\documentclass[a4paper,12pt]{article}
\usepackage[a4paper,margin=1in,headheight=\baselineskip]{geometry}
\usepackage[utf8]{inputenc}
\usepackage[T1]{fontenc}
\usepackage[english]{babel}
\usepackage[hidelinks,hypertexnames=false]{hyperref}
\usepackage{graphicx}
\usepackage{amsmath}
\usepackage{amssymb}
\usepackage{amsthm}
\usepackage{cleveref}
\usepackage{thmtools}
\usepackage{mathtools}
\usepackage{bbm}
\usepackage{enumerate}
\usepackage{enumitem}
\usepackage{color}
\numberwithin{equation}{section}

\usepackage{scrlayer-scrpage}
\ofoot{}

\crefname{subsection}{subsection}{subsections}
\crefname{subsubsection}{subsubsection}{subsubsections}

\usepackage{tikz}
\usetikzlibrary{arrows,positioning}

\let\originalleft\left
\let\originalright\right
\renewcommand{\left}{\mathopen{}\mathclose\bgroup\originalleft}
\renewcommand{\right}{\aftergroup\egroup\originalright}

\newcommand{\R}{{\mathbb R}}

\newcommand{\C}{{\mathbb C}}

\newcommand{\T}{\mathcal{T}}

\newcommand{\V}{\mathcal{V}}
\newcommand{\W}{\mathcal{W}}

\renewcommand{\d}{\;\mathrm{d}}

\newcommand{\GammaD}{\Gamma_{\mathrm{D}}}
\newcommand{\GammaN}{\Gamma_{\mathrm{N}}}

\newcommand{\B}{X}
\newcommand{\Bd}{B}
\newcommand{\Bdh}{B_\diamP}
\newcommand{\Vh}{\V_\diamT}

\DeclareMathOperator{\Div}{div}

\DeclareMathOperator{\opL}{\mathcal L}
\DeclareMathOperator{\opl}{\ell}
\DeclareMathOperator{\opN}{\mathcal N}
\DeclareMathOperator{\opSL}{\mathcal{S}}
\DeclareMathOperator{\opDL}{\mathcal{D}}
\DeclareMathOperator{\opV}{\mathsf{V}}
\DeclareMathOperator{\opVh}{\opV_{\mathgroup=-1 \diamT}}
\DeclareMathOperator{\opVht}{\widetilde{\opV}_{\mathgroup=-1 \diamT}}

\newcommand{\opA}{\mathbb A}
\newcommand{\Nh}{\widetilde{N}_\diamT}

\DeclareMathOperator{\trace}{\gamma}
\newcommand{\traceD}[1][]{\trace_{#1}}

\newcommand{\traceS}{\trace_{\Sigma}}

\newcommand{\conj}{\overline}

\newcommand{\genvarV}{v}
\newcommand{\genvarValt}{w}

\newcommand{\genvarB}{t}
\newcommand{\datavarB}{f}
\newcommand{\genvarBd}{g}
\newcommand{\genvarBdalt}{h}
\newcommand{\solvarBd}{b}

\newcommand{\notempty}[2][]{\expandafter\ifx\expandafter\relax
\detokenize{#2}\relax\else#1{(#2)}\fi}
\newcommand{\sscript}[1]{^{#1}}
\newcommand{\genvarVh}[1][]{v_{\diamT}\notempty[\sscript]{#1}}

\newcommand{\genvarBdh}[1][]{g_\diamP\notempty[\sscript]{#1}}
\newcommand{\genvarBdhalt}{h_\diamP}
\newcommand{\solvarBdh}{b_\diamP}

\newcommand{\NVh}{n_{\diamT}}
\newcommand{\NBdh}{n_{\diamP}}

\newcommand{\diamT}{d}
\newcommand{\diamP}{h}
\newcommand{\errCon}{\varepsilon_{\mathrm{con}}}

\newcommand{\traceN}[1][N]{\operatorname{\gamma}_{\mathrm{N},#1}}

\newcommand{\jump}[2][]{\left[#2\right]_{#1}}

\newcommand{\mean}[2][]{\{\kern-4.2pt\{#2\}\kern-4.2pt\}_{#1}}

\newcommand{\jumpN}[2][]{\jump[\mathrm{N}\ifstrempty{}{}{,#1}]{#2}}

\theoremstyle{definition}
\newtheorem{defn}{Definition}[section]
\theoremstyle{plain}
\newtheorem{theorem}{Theorem}[section]
\newtheorem{lemma}[theorem]{Lemma}
\newtheorem{remark}[theorem]{Remark}

\newtheorem{cor}[theorem]{Corollary}

\newcommand{\Const}[1]{C_{\mathrm{#1}}}
\newcommand{\const}[1]{c_{\mathrm{#1}}}

\usepackage{totcount}
\newtotcounter{nconst}
\makeatletter
\newcount\rc@count
\let\rc@clearconstantlist\empty

\newcommand\rc@clearconstant[1]{\global\expandafter\let\csname rc@XConst@#1\endcsname\undefined}

\newcommand\resetconstants[1]{%
    \def\rc@constname{#1}
    \global\rc@count=1\relax 
    \bgroup 
        \let\\\rc@clearconstant 
        \rc@clearconstantlist
        \global\let\rc@clearconstantlist\empty 
    \egroup
}
\newcommand\XConst[1]{%
    \@ifundefined{rc@XConst@#1}{%
        \expandafter\xdef\csname rc@XConst@#1\endcsname{%
           \noexpand\rc@useconst{\rc@constname}{\the\rc@count}%
        }%
        \g@addto@macro\rc@clearconstantlist{\\{#1}}%
		\global\setcounter{nconst}{\the\rc@count}\relax
        \global\advance\rc@count1\relax
    }{}%
    \csname rc@XConst@#1\endcsname
}

\newcommand\rc@useconst[2]{\ensuremath{#1_{#2}}}
\makeatother

	\resetconstants{C}

\title{ BEM for variable coefficient second-order problems
}
\author{Benedikt Gr\"a{\ss}le\footnote{Institut für Mathematik, Universität Zürich, Winterthurerstr.~190, CH-8057 Zürich, Switzerland \\
	(\{benedikt.graessle, stas\}@math.uzh.ch).}
	\and
	Stefan A. Sauter\footnotemark[1]
}
\date{}
\begin{document}

\maketitle
\begin{abstract}
	A novel boundary element method (BEM) removes the classical dependence 
	on explicit fundamental solutions and extends quasi-optimal BEM 
	discretisations to strongly elliptic operators with variable coefficients. 
	The approach constructs a computable approximation of the boundary operator 
	from a Galerkin discretisation of the underlying elliptic differential operator 
	in a one-time preprocessing step, for instance by conforming finite elements. 
	The resulting algebraic formulation retains the dimension reduction intrinsic 
	to boundary integral methods and is compatible with standard data-sparse 
	matrix compression techniques.

\end{abstract}

\noindent {\bf Keywords:} boundary element method, boundary integral equation,
variable-coefficient elliptic operator, 
single-layer potential, quasi-best approximation

\noindent {\bf AMS Classification:}  65N38, 65N30, 65R20, 31B10

\section{Introduction}\label{sec:Introduction}
Boundary element methods (BEM) provide efficient discretisations of
boundary integral equations associated with linear elliptic partial differential
equations (PDE). In its classical form, the method of integral equations
exploits an explicit Green's function/fundamental solution for the underlying
differential operator and represents boundary integral operators via their singular kernels.
This leads to dimension reduction, accurate treatment of unbounded domains,
and high accuracy for smooth data. However, the reliance on explicit kernels
is a structural limitation: For general second-order operators with variable
coefficients, closed-form representations are typically \emph{not} available.
While the efficient numerical realisation of BEM for problems with
\textit{constant} coefficients has matured (see, e.g.,%
~\cite{SS:BoundaryElementMethods2011}), the discretisation of boundary integral operators
whose kernel functions are given by oscillatory integrals or asymptotic
expansions requires the development of problem-specific quadrature methods for
an accurate and efficient evaluation of these kernels. We emphasize that for
most boundary value problems with varying coefficients, the Schwartz kernel is
unknown and an evaluation not feasible.

This paper introduces a kernel-free boundary element formulation that (i) replaces the
\textit{analytic derivation} of the boundary integral operator for 
general strongly elliptic second-order PDE by a
\emph{computable approximation} obtained from a volume discretisation, and (ii) 
constructs the discrete boundary operator directly from the discretised 
PDE operator, using only information from a local neighbourhood of the boundary or interface.
In this way, the principal benefit
of BEM---a reduction of the effective problem dimension by one---is retained,
in particular for transmission problems with moving interfaces 
or with interfaces at various positions, embedded in the same ambient medium.
From an algorithmic viewpoint, the resulting algebraic structure is
compatible with data-sparse realisations in hierarchical matrix formats.

For the presentation, a single-layer potential is considered and the resulting
approximate single-layer boundary integral operator is employed
in a Galerkin boundary element formulation. First, the volume solution operator 
(also called Newton potential) is discretised from a 
conforming Galerkin finite element method (FEM) for the underlying elliptic PDE.
This volume computation is performed only once in a preprocessing step,
independent of the boundary discretisation, and depends only on the coefficients 
of the differential operator. The preprocessing cost is therefore 
comparable to an (approximate) inversion of 
a FEM stiffness matrix and does
\emph{not} need to be repeated when interfaces change within the same medium.

Second, the discrete single-layer operator is defined purely algebraically as a composition of
the inverse finite element stiffness matrix with sparse transfer operators
between the discrete volume and boundary spaces. 
This factorised representation enables a direct implementation 
once the volume discretisation is available
and keeps the additional cost of the boundary discretisation~moderate.
Importantly, the factorisation is compatible with hierarchical matrix
compression, which enables the application of the resulting
discrete boundary operators in (almost) linear complexity up to logarithmic
factors.

This new approach constitutes the main contribution. It offers a kernel-free,
black-box route to nonlocal boundary and interface operators for elliptic
problems, without restricting to constant coefficients and without requiring
problem-specific kernel evaluations (for instance, via oscillatory-integral
representations or asymptotic expansions).

The analysis is developed in an abstract operator framework and then
specialised to general second-order strongly elliptic operators. This
separates the functional-analytic properties of the single-layer operator from
the approximation properties of the chosen volume and boundary
discretisations.\medskip

\renewcommand*{\thetheorem}{\Alph{theorem}}
\medskip
\noindent\textbf{Main results.} 
Let $\Omega\subset\R^n$ be a Lipschitz domain and consider
the elliptic operator 
\begin{align*}
	\opL v\coloneqq -\Div(\opA\nabla v) + \mathbf{b}\cdot \nabla v + c\, v
	\qquad\text{for all }v\in \V\subset H^1(\Omega)
\end{align*}
with (possibly) varying coefficients $\opA,\mathbf{b}, c$, and boundary conditions
on $\partial\Omega$.
The focus is on $\mathcal{L}%
$-harmonic functions in $\Omega\setminus\Sigma$ outside some Lipschitz
manifold $\Sigma\subset\overline{\Omega}$. Let $B:=H^{-1/2}\left(
\Sigma\right)  $. 
A mapping $\mathcal{S}:B\rightarrow\mathcal{V}$ is called 
\textit{single-layer} \textit{potential} if, for any $b\in H^{-1/2}\left(
\Sigma\right)  $, 
\[
u=\mathcal{S}b\in\mathcal{V\quad}\text{satisfies\quad}\mathcal{L}%
u=0\quad\text{in }\Omega\setminus\Sigma.
\]
Its trace %
on $\Sigma$
induces the \emph{single-layer equation}
\begin{align}\label{eqn:WP_intro}
	\opV\solvarBd=\datavarB
	\qquad\text{on }\Sigma
\end{align}
with prescribed Dirichlet data 
$\datavarB\coloneqq u|_{\Sigma}\in \B\coloneqq H^{1/2}(\Sigma)$.
The integral kernel of the single-layer (boundary) operator $\opV:\Bd\to \B$ 
is the Green's function associated to $\opL$ and turns~\eqref{eqn:WP_intro} into
a boundary integral equation of the first kind with boundary density 
$\solvarBd$ as unknown.
Classical Galerkin BEM for~\eqref{eqn:WP_intro} employs 
some finite-dimensional conforming subspace $\Bdh\subset \Bd$ and an explicit 
representations of~$\opV$, which is not available in general.

\medskip\noindent
\emph{Kernel-free Boundary Element Method.}
To accomodate varying coefficients, the key idea is a construction of the discrete 
single-layer operator based on the characterisation of the single-layer potential 
as the composition of the dual trace operator 
with the solution operator (Newton potential) for $\mathcal{L}$. 
First, we introduce a discretisation of the latter over a
finite-dimensional conforming subspace $\mathcal{V}_{d}\subset\mathcal{V}$.
The approximated single-layer operator
\[
\mathsf{V}_{d}:B\rightarrow X
\]
is then defined as the composition with discrete trace and dual
trace maps. This leads to a generalized Galerkin BEM formulation for
(\ref{eqn:WP_intro}): Seek $b_{h}\in B_{h}\subset B$ with
\begin{equation}
\left\langle \mathsf{V}_{d}b_{h},\overline{g_{h}}\right\rangle _{X\times
B}=\left\langle f,\overline{g_{h}}\right\rangle _{X\times B}\qquad\text{for
all }g_{h}\in B_{h}.\label{eqn:DSP_intro}%
\end{equation}
The passage from a homogeneous PDE to a boundary integral equation reduces the
effective dimension of the discretisation by one. This benefit extends the kernel-free 
setting with variable coefficients. 
Importantly, the fully discrete method~\eqref{eqn:DSP_intro} admits a purely algebraic formulation
and its solution is therefore computable.

\medskip\noindent\emph{Analysis.}
For the mathematical analysis of the presented method, it is convenient to 
formulate the discrete maps at the operator level in an abstract setting.
This paper considers abstract discretisations $\mathcal{V}_{d}\subset \mathcal{V}$ 
and $B_{h}\subset\Bd$
to isolate the structural mechanism from
the particular choice of discrete volume and boundary spaces.
The first main result establishes a quasi-approximation property
with respect to both discretisations, under the assumption that the consistency error
$\varepsilon_{\mathrm{con}}\coloneqq\|\opV-\opVh\|_{L(\Bdh;\Bdh')}$ is sufficiently small.%
\begin{theorem}\label{thm:A}
	If~$\varepsilon_{\mathrm{con}}\leq\varepsilon_{\opL}$, then~\eqref{eqn:DSP_intro} 
	admits a unique discrete solution $\solvarBdh\in \Bdh$ with
	\begin{align*}
		\Const{qb}^{-1}\|\solvarBd-\solvarBdh\|_{H^{-1/2}(\Sigma)}
		\leq \min_{\genvarBdh\in \Bdh}\|\solvarBd-\genvarBdh\|_{H^{-1/2}(\Sigma)}
			+  \min_{\genvarVh\in \Vh}\|\opSL\solvarBd-\genvarVh\|_{H^1(\Omega)}.
	\end{align*}
\end{theorem}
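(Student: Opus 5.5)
The proof combines a Strang-type perturbation of the discrete inf--sup stability of $\opV$ on $\Bdh$ with an estimate of the consistency error at the exact solution through a Céa/Galerkin-orthogonality argument for the volume discretisation.

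\emph{Step 1 (stability).} Since $\opV:\Bd\to\B$ is coercive up to a compact perturbation (strong ellipticity of $\opL$ gives a Gårding inequality for the single-layer operator) and injective, a conforming Galerkin discretisation $\Bdh$ that is fine enough is uniformly inf--sup stable with some $\gamma>0$. Choose $\varepsilon_{\opL}\le\gamma/2$. For $\genvarBdh\in\Bdh$ we then have
\[
\|\opVh \genvarBdh\|_{\Bdh'}\ge\|\opV\genvarBdh\|_{\Bdh'}-\errCon\|\genvarBdh\|\ge \tfrac{\gamma}{2}\|\genvarBdh\|.
\]
The discrete operator is therefore injective on the finite-dimensional space $\Bdh$, so~\eqref{eqn:DSP_intro} has a unique solution $\solvarBdh$.

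\emph{Step 2 (error splitting).} Fix an arbitrary $\genvarBdh\in\Bdh$. Discrete stability gives
\[
\tfrac{\gamma}{2}\|\solvarBdh-\genvarBdh\|\le\|\opVh(\solvarBdh-\genvarBdh)\|_{\Bdh'}=\|\opV\solvarBd-\opVh\genvarBdh\|_{\Bdh'}.
\]
The right-hand side is at most $\|\opVh(\solvarBd-\genvarBdh)\|_{\Bdh'}+\|(\opV-\opVh)\solvarBd\|_{\Bdh'}$. The first term requires $\opVh$ to be bounded on all of $\Bd$ uniformly in $\diamT$. This holds because $\opVh$ is the composition of the trace, a discrete Newton potential that is uniformly stable by the discrete inf--sup of $\opL$ on $\Vh$, and the dual trace. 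The first term is thus $\lesssim\|\solvarBd-\genvarBdh\|$. The triangle inequality with $\|\solvarBd-\genvarBdh\|$ then produces the best-approximation term in $\Bdh$.

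\emph{Step 3 (consistency at the exact solution, the main obstacle).} Write $\opV=\traceS\opSL$ with $\opSL=\opN\traceS'$, and $\opVh=\traceS\opSL_\diamT$, where $\opSL_\diamT\solvarBd\in\Vh$ is the Galerkin projection of $u=\opSL\solvarBd$: it solves $a(\opSL_\diamT\solvarBd,\genvarVh)=\langle\solvarBd,\traceS\genvarVh\rangle$ for all $\genvarVh\in\Vh$. Here the discrete trace and dual trace must coincide with exact restrictions on $\Vh$; if the paper uses other transfer maps, their discrepancy has to be added to $\errCon$-type terms. Under this identification the discrete potential is the Galerkin approximation of $u$, so the Céa lemma for the stable discretisation of the sesquilinear form $a$ of $\opL$ on $\Vh$ gives $\|u-\opSL_\diamT\solvarBd\|_{H^1}\lesssim\min_{\genvarVh}\|u-\genvarVh\|_{H^1}$. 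Continuity of the trace $H^1(\Omega)\to H^{1/2}(\Sigma)$ and the embedding $H^{1/2}(\Sigma)\to\Bdh'$ then bound $\|(\opV-\opVh)\solvarBd\|_{\Bdh'}$ by $\min_{\genvarVh\in\Vh}\|\opSL\solvarBd-\genvarVh\|_{H^1(\Omega)}$.

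The delicate point is that the smallness $\errCon\le\varepsilon_{\opL}$ is only used on $\Bdh$. We must avoid bounding the consistency error at $\solvarBd$ by $\errCon$, and use the Céa argument instead. This also needs uniform discrete inf--sup stability of $a$ on $\Vh$, which is available either because the paper's standing assumptions include it, or via a Schatz argument for $\Vh$ fine enough. Collecting the constants into $\Const{qb}$ completes the proof.
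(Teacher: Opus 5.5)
\textbf{Gap in Step~1.} The theorem covers \emph{every} conforming $\Bdh\subset\Bd$ with $\errCon\le\varepsilon_{\opL}$, and $\varepsilon_{\opL},\Const{qb}$ must depend only on $\opL$, not on $\Sigma$ or on the discretisation. A G{\aa}rding inequality plus injectivity of $\opV$ gives discrete inf--sup stability only asymptotically. It applies to $\Bdh$ taken from a family with the approximation property, and only below some unquantified mesh threshold. The resulting $\gamma$ depends on $\Sigma$ and on that family, and for coarse $\Bdh$ you get nothing. So ``choose $\varepsilon_{\opL}\le\gamma/2$'' adds a fineness hypothesis that is not in the statement and loses the claimed dependence of the constants. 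Your fallback to a Schatz argument for $\Vh$ has the same problem.

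\textbf{The missing idea.} The standing coercivity assumption~\eqref{eqn:L_coercive} transfers directly to $\opV$ on all of $\Bd$ (\Cref{thm:S}). For $v=\opSL\genvarBd$ one has $\opL v=\traceD'\genvarBd$, hence
\[
|\langle\opV\genvarBd,\conj{\genvarBd}\rangle_{\B\times\Bd}|=|\opl(v,v)|\ge\const{\opL}\|v\|_{\V}^2\ge\frac{\const{\opL}}{\|\opL\|^2}\|\traceD'\genvarBd\|_{\V'}^2\ge\frac{\const{\opL}}{\|\opL\|^2}\|\genvarBd\|_{\Bd}^2 .
\]
The last step holds because the minimal-extension norm~\eqref{eqn:B_norm_def} makes $\traceD:\V_0^\perp\to\B$ an isometry. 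This gives $\gamma=\const{\opL}/\|\opL\|^2$ for every $\Bdh$, so $\varepsilon_{\opL}=\const{\opL}/(2\|\opL\|^2)$ exactly as in the paper. Your perturbation estimate in Step~1 then goes through verbatim. Coercivity of $\opl$ likewise gives $\|P_\diamT\|_{L(\V;\V)}\le\|\opL\|/\const{\opL}$ and C\'ea's lemma for $P_\diamT$ with no mesh condition.

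\textbf{Steps 2--3.} With this repair, Steps 2--3 are correct, and they take a somewhat different route from the paper. The paper applies coercivity of the exact $\opV$ to the full error $\solvarBd-\solvarBdh$. It splits that error via the $\Bd$-orthogonal best approximation $\Pi\solvarBd$ and a Pythagoras identity. It then bounds $(\opV-\opVh)\solvarBdh$ in three pieces, using $\errCon$ only for the purely discrete piece $e_\diamP$.

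You instead apply discrete stability of $\opVh$ on $\Bdh$ to $\solvarBdh-\genvarBdh$. You combine this with the uniform bound $\|\opVh\|_{L(\Bd;\B)}\le\|\opL\|\|\opN\|/\const{\opL}$ and with $\|(\opV-\opVh)\solvarBd\|_{\B}\le\|(1-P_\diamT)\opSL\solvarBd\|_{\V}$ plus C\'ea. This Strang-type argument is shorter, needs no orthogonal projection, and gives a constant of the same form. Both proofs rest on the identity $\opV-\opVh=\traceD(1-P_\diamT)\opSL$. You correctly avoid bounding the consistency error at $\solvarBd\notin\Bdh$ by $\errCon$.
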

The point is that the constants $\varepsilon_{\opL},\Const{qb}>0$ in \textbf{\Cref{thm:A}}
depend exclusively on the elliptic differential operator $\opL$ and are,
in particular, 
independent of the interface $\Sigma$ and the source $f\in H^{1/2}(\Sigma)$.
The convergence analysis is discussed for Galerkin FEM and BEM
discretisations with (mapped) piecewise polynomials $\Vh=P_p(\mathcal{T})\cap \V$ and
$\Bdh=P_k(\mathcal{P})$ of total degrees at most $p$ and $k$
over quasi-uniform
partitions $\mathcal{T}$ and $\mathcal{P}$ of $\Omega$ and $\Sigma$, respectively.
In this setting, the smallness condition $\varepsilon_{\mathrm{con}}\leq\varepsilon_{\opL}$
translates to the natural compatibility condition 
\begin{align}\label{eqn:rel_mesh_size}
	\diamT\leq \Const{cmp} \diamP
	\qquad\text{with }\diamT=\max_{T\in\mathcal{T}}\operatorname{diam}(T)
	\text{ and }\diamP=\max_{\tau\in\mathcal{P}}\operatorname{diam}(\tau)
\end{align}
on the maximal mesh sizes with some constant $\Const{cmp}>0$
determined by $\opL$ and an index elliptic regularity $\sigma_{\operatorname{reg}}>0$ 
defined in~\Cref{sub:Abstract_model_problem}.
\textbf{\Cref{thm:A}} and well-known approximation properties of finite/boundary
elements imply optimal convergence rates.

\begin{theorem}\label{thm:B}
	There exists $\Const{cmp}>0$ such that~\eqref{eqn:rel_mesh_size}
	implies the well-posedness of~\eqref{eqn:DSP_intro}.
	Moreover, 
	for any $0\leq s\leq \sigma_{\mathrm{reg}}$ and $\datavarB\in H^{1/2+s}(\Sigma)$
	it holds 
	(i) if $s<1/2$, then
	\begin{align*}
		\|\solvarBd-\solvarBdh\|_{H^{-1/2}(\Sigma)}
		\leq \Const{rate} (\diamT^{s}+\diamP^s)\|\datavarB\|_{H^{1/2+s}(\Sigma)},
	\end{align*}
	(ii) if $s>1/2$ and $\Sigma\subset\cup\mathcal{F}$ is resolved by 
	the faces $\mathcal{F}$ of $\mathcal{T}$, then
	\begin{align*}
		\|\solvarBd-\solvarBdh\|_{H^{-1/2}(\Sigma)}
		\leq \Const{rate}\left(\diamT^{\min\{s, p+1\}} + \diamP^{\min\{s,k+1\}}\right)
			\|\datavarB\|_{H^{1/2+s}(\Sigma)}.
	\end{align*}
\end{theorem}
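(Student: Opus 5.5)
The plan is to reduce \textbf{\Cref{thm:B}} to \textbf{\Cref{thm:A}}, followed by standard approximation and regularity arguments.

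\emph{Step 1: well-posedness from the compatibility condition.} It suffices to show $\errCon\leq \Const{}\,(\diamT/\diamP)^{\sigma}$ for some $\sigma>0$; then \eqref{eqn:rel_mesh_size} with $\Const{cmp}$ small gives $\errCon\leq\varepsilon_{\opL}$. Since $\opVh=\traceS\,\opSL_\diamT$, where $\opSL_\diamT$ is the Galerkin approximation of $\opSL$ in $\Vh$, we have for $\genvarBdh,\genvarBdhalt\in\Bdh$
\begin{align*}
\langle(\opV-\opVh)\genvarBdh,\conj{\genvarBdhalt}\rangle = \langle \traceS(\opSL-\opSL_\diamT)\genvarBdh,\conj{\genvarBdhalt}\rangle .
\end{align*}
An Aubin--Nitsche duality argument controls this. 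Let $w=\opSL^*\genvarBdhalt$ be the adjoint single-layer potential. Galerkin orthogonality turns the expression into an $\opL$-sesquilinear form of $(\opSL-\opSL_\diamT)\genvarBdh$ against $w-w_\diamT$, which is bounded by
\begin{align*}
\min_{\genvarVh\in\Vh}\|\opSL\genvarBdh-\genvarVh\|_{H^1(\Omega)}\,\min_{\genvarVh\in\Vh}\|w-\genvarVh\|_{H^1(\Omega)} .
\end{align*}
The regularity index $\sigma_{\mathrm{reg}}$ gives $\opSL:H^{-1/2+\sigma}(\Sigma)\to H^{1+\sigma}(\Omega)$ (piecewise across $\Sigma$ suffices for $\sigma<1/2$). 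Hence each minimum is $\lesssim\diamT^{\sigma}\|\cdot\|_{H^{-1/2+\sigma}(\Sigma)}$. The inverse estimate on the quasi-uniform $\mathcal P$, namely $\|\genvarBdh\|_{H^{-1/2+\sigma}}\lesssim\diamP^{-\sigma}\|\genvarBdh\|_{H^{-1/2}}$, then yields
\begin{align*}
\errCon\lesssim(\diamT/\diamP)^{2\sigma}.
\end{align*}
Even using the duality only on one side gives a positive power of $\diamT/\diamP$, which is enough.

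\emph{Step 2: rates.} Apply \textbf{\Cref{thm:A}}. For $\datavarB\in H^{1/2+s}(\Sigma)$, the regularity of the single-layer equation gives $\solvarBd\in H^{-1/2+s}(\Sigma)$ with norm $\lesssim\|\datavarB\|_{H^{1/2+s}}$ for $s\leq\sigma_{\mathrm{reg}}$.

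\emph{Case (i), $s<1/2$.} Boundary-element approximation with $L^2$-projection onto $P_k(\mathcal P)$ gives $\min\|\solvarBd-\genvarBdh\|_{H^{-1/2}}\lesssim\diamP^s\|\solvarBd\|_{H^{-1/2+s}}$. Moreover $u=\opSL\solvarBd\in H^{1+s}(\Omega)$ globally, because for $s<1/2$ the jump of the conormal derivative across $\Sigma$ does not obstruct $H^{1+s}$. A Scott--Zhang quasi-interpolant therefore gives $\min\|u-\genvarVh\|_{H^1}\lesssim\diamT^s$.

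\emph{Case (ii), $s>1/2$.} Here $u$ is only piecewise $H^{1+s}$ on the subdomains separated by $\Sigma$. Because $\Sigma$ is resolved by faces of $\mathcal T$, a piecewise Scott--Zhang interpolant that is continuous across $\Sigma$ (since $u$ itself is continuous there) gives $\diamT^{\min\{s,p+1\}}$. On the boundary side, $\diamP^{\min\{s,k+1\}}$ follows from the standard duality estimate in $H^{-1/2}$.

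\emph{Main obstacle.} I expect the hardest part to be the regularity input for general Lipschitz $\Sigma$ and variable coefficients. This concerns the mapping properties of $\opSL$ and $\opSL^*$ into (piecewise) $H^{1+\sigma}$, and the equivalent regularity of $\opV^{-1}$. I will take these as encoded in the definition of $\sigma_{\mathrm{reg}}$ in \Cref{sub:Abstract_model_problem}, and I would first try to derive them from the regularity of the adjoint volume problem together with trace theorems.
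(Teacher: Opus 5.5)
Your proposal is correct and follows the paper's proof. Well-posedness comes from the consistency bound $\errCon\lesssim(\diamT/\diamP)^{s}$ for some $0<s<1/2$, obtained by approximating $\opSL\genvarBdh\in H^{1+s}(\Omega)$ and applying the inverse estimate on $\Bdh$; the rates then follow from the quasi-optimality of \textbf{\Cref{thm:A}} together with the regularity shift and standard FEM/BEM approximation. Your Aubin--Nitsche refinement to $(\diamT/\diamP)^{2\sigma}$ requires a regularity shift for the adjoint single-layer potential (i.e.\ for $\opL^*$), which is not part of the definition of $\sigma_{\mathrm{reg}}$; under the paper's assumptions it is your one-sided fallback, exactly \Cref{lem:consistency}, that carries the argument.
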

For merely Lipschitz coefficients and boundaries, 
one has $\sigma_{\mathrm{reg}}<1/2$ and optimal 
a~priori convergence rates by \textbf{\Cref{thm:B}}.i. 
Higher-order convergence rates (as in \textbf{\Cref{thm:B}}.ii) 
require additional regularity of the data together with 
an appropriate resolution of the interface $\Sigma$ by the volume discretisation
that is further discussed in \Cref{rem:delta_resolve_interface}.

\newpage
\medskip\noindent
\emph{Complexity.} 
The (approximated) single-layer operator $\opV_\diamT$ in~\eqref{eqn:DSP_intro} is nonlocal 
on an $\left(  n-1\right)  $-dimensional manifold $\Sigma$ and results in a dense system matrix
$V_\diamT^\diamP\in \mathbb{C}^{\NBdh\times\NBdh}$, so efficient algorithms are essential.
The first key observation to this end is the factorisation
\begin{align}\label{eqn:Vd_repr}
	V_{\diamT}^{\diamP}=(R_{\diamT}^{\diamP})^{H}N_{\diamT}R_{\diamT}^{\diamP}
\end{align}
into a sparse transfer matrix \(R_{\diamT}^{\diamP}\)
between the volume and boundary spaces, and the inverse
volume stiffness matrix \(N_{\diamT}\).
The point is that \(R_{\diamT}^{\diamP}\) is computable (subject to an exact quadrature)
in linear time with respect to the boundary dimension $\NBdh=\operatorname{dim}\Bdh$
and the construction of \(V_{\diamT}^{\diamP}\) accesses
\(N_{\diamT}\) only through its action on boundary-adjacent volume degrees of freedom.
This enables an efficient assembly of the system matrix with the same computational complexity 
as for the dense Galerkin matrix in classical BEM.
\begin{theorem}\label{thm:C}
	If $N_{\diamT}$ is available for degrees of freedom in a vicinity of the boundary and
	given $R_\diamT^\diamP$ for $\diamP\lesssim\diamT$, 
	then the system matrix $V_{\diamT}^{\diamP}$ can be computed with
	$\mathcal{O}\left( \NBdh^{2}\right) $ operations.
\end{theorem}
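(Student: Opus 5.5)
The plan is a direct operation count based on the factorisation~\eqref{eqn:Vd_repr}. First we identify the sparsity structure of the transfer matrix. $R_\diamT^\diamP\in\mathbb{C}^{\NVh\times\NBdh}$ has entries $\langle \traceS \phi_j,\overline{\psi_i}\rangle$, where $\phi_j$ are the volume basis functions of $\Vh$ and $\psi_i$ the basis functions of $\Bdh$. Only volume basis functions whose support meets $\Sigma$ can contribute. Let $J_\Sigma$ denote this index set of boundary-adjacent degrees of freedom.

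Under the assumption $\diamP\lesssim\diamT$ and quasi-uniformity, each boundary panel $\tau\in\mathcal P$ meets only $\mathcal{O}(1)$ volume elements. Since the number of volume elements touching $\Sigma$ is $\mathcal O(\diamT^{-(n-1)})\lesssim\mathcal O(\diamP^{-(n-1)})=\mathcal O(\NBdh)$, we get $\#J_\Sigma=\mathcal O(\NBdh)$. Consequently the restriction $\widetilde R\coloneqq R_\diamT^\diamP|_{J_\Sigma\times\{1,\dots,\NBdh\}}$ is a sparse $\mathcal O(\NBdh)\times\NBdh$ matrix. For the same reason, by the local support of both bases, every column of $\widetilde R$ has $\mathcal O(1)$ nonzero entries.

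Second, since all other rows of $R_\diamT^\diamP$ vanish,
\[
V_\diamT^\diamP=\widetilde R^{H}\,\Nh\,\widetilde R,\qquad \Nh\coloneqq N_\diamT|_{J_\Sigma\times J_\Sigma},
\]
so only the block of $N_\diamT$ on boundary-adjacent degrees of freedom is needed. This block is assumed available and has size $\mathcal O(\NBdh)\times\mathcal O(\NBdh)$. We then compute $W\coloneqq\Nh\widetilde R$ column by column. Each column of $\widetilde R$ has $\mathcal O(1)$ nonzeros, so each column of $W$ is a linear combination of $\mathcal O(1)$ columns of $\Nh$ and costs $\mathcal O(\#J_\Sigma)=\mathcal O(\NBdh)$ operations. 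Over all $\NBdh$ columns this totals $\mathcal O(\NBdh^2)$.

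Finally, $\widetilde R^HW$ is formed entrywise. The $(i,j)$ entry is the inner product of the sparse column $i$ of $\widetilde R$ with column $j$ of $W$, costing $\mathcal O(1)$, hence $\mathcal O(\NBdh^2)$ for all entries.

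The main obstacle is making the counting bounds rigorous. These are $\#J_\Sigma\lesssim\NBdh$ and the $\mathcal O(1)$ column sparsity, both of which rely on quasi-uniformity of $\mathcal T$ and $\mathcal P$, on shape regularity of the Lipschitz manifold $\Sigma$ (so that $|\Sigma|\,\diamP^{-(n-1)}\simeq\NBdh$), and on the mesh relation. I would establish them by a covering argument: each $\tau$ of diameter at most $\diamP\lesssim\diamT$ lies in an $\mathcal O(\diamT)$ ball, which by shape regularity meets a bounded number of elements $T\in\mathcal T$, each carrying $\mathcal O(1)$ degrees of freedom for fixed $p$. Conversely, the elements meeting $\Sigma$ are counted by comparing the surface measure of $\Sigma$ with $\diamT^{n-1}$. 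The constants hidden in $\mathcal O$ then depend only on $p,k$, shape regularity and $\Sigma$.
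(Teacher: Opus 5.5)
Your proposal is correct and follows the paper's proof. Like the paper, you restrict $N_\diamT$ to the boundary-adjacent block $N_{\diamT,\Sigma}$, use the $\mathcal O(1)$ column sparsity of $R_\diamT^\diamP$ from the overlap bound for $\diamP\lesssim\diamT$, and count the two sparse--dense products in $V_\diamT^\diamP=(R_{\diamT,\Sigma}^\diamP)^H N_{\diamT,\Sigma}R_{\diamT,\Sigma}^\diamP$. The one difference is how you bound $\#J_\Sigma$: the paper reads off $\#J_\Sigma\le\sum_\ell|\mathcal N_\Sigma(\ell)|\le\Const{R}\NBdh$ directly from the column sparsity, so your surface-measure comparison (which needs the lower bound $\NBdh\gtrsim\diamP^{1-n}$ and brings in constants depending on $\Sigma$) is unnecessary.
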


In practice, fast BEM implementations avoid forming dense matrices by data-sparse
compression such as hierarchical matrix approximations or multipole techniques. 
The factorisation~\eqref{eqn:Vd_repr} is compatible with this paradigm, since one may replace $N_\diamT$
by a data-sparse approximation; see \Cref{rem:data_sparse}.
Such approximations of inverse FEM stiffness matrices are available, e.g., in hierarchical format as in%
~\cite[\S 7.5]{Hac:HierarchicalMatricesAlgorithms2015} and~\cite{FMP:HmatrixApproximability2015}.

\medskip
\noindent\textbf{Outline and further results.}
\Cref{sec:preliminaries} introduces the relevant notation and definitions.

The quasi-optimal discretisation of the single-layer operator is presented 
in an abstract Hilbert space setting 
with conforming subspaces $\Vh\subset \V$ and $\Bdh\subset \Bd$ in 
\Cref{sec:BEM_for_the_single-layer_potential}.
The differential and boundary integral operators are characterised at the operator level and,
in particular, without drawing on kernel representations to underline the kernel-free nature and scope 
of the theory. 
This framework enables BEM formulations for a large problem class by identifying 
minimal conditions for quasi-optimal approximation properties of the discrete spaces
in \Cref{thm:quasi_best}. 
A key thechnical result is the
control of the consistency error $\varepsilon_{\mathrm{con}}$
in terms of discretisation parameters as in~\eqref{eqn:rel_mesh_size}
in \Cref{lem:consistency}.

\Cref{sec:BEM for general single-layer operators} applies the abstract theory to
single-layer operators induced by strongly elliptic second-order operators with general
coefficients, leading to the first quasi-optimal BEM formulation in this setting and
implying \textbf{\Cref{thm:A}} in \Cref{sub:Model problem}.
The analysis accommodates a broad class of homogeneous boundary conditions, including
transparent (Dirichlet-to-Neumann) conditions used to treat (possibly indefinite) problems 
in unbounded domains, see
e.g.,~\cite{MM:FiniteElementMethod1980,Fen:FiniteElementMethod1983,KG:ExactNonreflectingBoundary1989,GS:DirichlettoNeumannOperatorHelmholtz2025,GHS:StableSkeletonIntegral2025}
and the references therein.
\Cref{sub:Elliptic regularity} discusses the elliptic regularity of the underlying PDE and
transmission problem that enter the compatibility condition~\eqref{eqn:rel_mesh_size},
using results from standard
theory~\cite{Gri:EllipticProblemsNonsmooth1985,Sav:RegularityResultsElliptic1998,McL:StronglyEllipticSystems2000,Agr:SpectralProblemsSecondorder2002}%
. 
\Cref{sub:Relation_to_classical_BEM} relates the operator-oriented framework to 
classical kernel-based representations via Green's functions.

\Cref{sub:FEM-based discretiation} instantiates the abstract setting for
standard $h$-FEM/BEM discretisations based on fairly general mapped piecewise polynomials on
shape-regular, quasi-uniform partitions of $\Omega$ and $\Sigma$.
The construction allows for non-matching volume and boundary discretisations;
the interface $\Sigma\not\subset\cup\mathcal{F}$ need \emph{not} be resolved by the volume mesh.
\Cref{sub:Convergence rates in } recalls required approximation properties and inverse
estimates, and derives the convergence rates of \textbf{\Cref{thm:B}}
under the natural compatibility condition~\eqref{eqn:rel_mesh_size}.
\Cref{rem:delta_resolve_interface} discusses higher-order convergence rates as in \textbf{\Cref{thm:B}}.ii 
for nearly resolved interfaces from~\cite{LMWZ:OptimalPrioriEstimates2010}.
The sparsity of the transfer matrix and~\eqref{eqn:Vd_repr}
lead in \Cref{sub:Algebraic structure and sparsity} to the complexity control \textbf{\Cref{thm:C}}, 
restated as \Cref{thm:complexity}, under a standard complexity model for sparse-dense
matrix multiplication.
\setcounter{section}{1}
\renewcommand*{\thetheorem}{\arabic{section}.\arabic{theorem}}
\section{Preliminaries}%
\label{sec:preliminaries}

Complex conjugation of $z\in\mathbb{C}$ is denoted by $\conj{z}$
and the conjugate transpose of a complex matrix $M$ by $M^H$;
the space of Hermitian $n\times n$-matrices reads $\mathbb{H}^n\subset \mathbb{C}^{n\times n}$.

For a complex-valued Banach space $X$ with norm $\|\bullet\|_X$, 
we denote its (topological) dual by
$X'$ and use the (bilinear) duality pairings $\langle\bullet,\bullet\rangle_{X'\times
X}$ and $\langle\bullet,\bullet\rangle_{X\times X'}$.
The space $L(X;Y)$ of bounded linear maps $A:X\to Y$ between Banach spaces $X$ and $Y$
is equipped with the operator norm $\|A\|_{L(X;Y)}$.
A conjugation $\conj{\bullet}:X\to X$ on $X$ is
an antilinear isometric involution and
extends to a conjugation $\conj{\bullet}:X'\to X'$ on $X'$ by
\begin{align*}
	\left\langle \conj {x'}, {x}\right\rangle_{X'\times X}=
	\conj{\left\langle x', \conj{x}\right\rangle_{X'\times X}}
	\qquad\text{for all }x\in X, x'\in X'.
\end{align*}
If $X$ and $Y$ carry a conjugation, the adjoint $A'\in L(Y';X')$ of $A\in L(X;Y)$ is given by
\begin{align}\label{eqn:adjoint_def}
  \big\langle A'y',\conj x\big\rangle_{X'\times X}
  =\big\langle y',\conj{Ax}\big\rangle_{Y'\times Y}
  \qquad\text{for all }x\in X,\ y'\in Y'.
\end{align}
Given a linear subspace $W\subset H$ of the Hilbert space $H$ with 
inner product $\left(\bullet,\bullet\right)_H$,
we write $v\perp W$ if $(v,w)_H=0$ for all $w\in W\subset H$.

Standard notation applies to (complex-valued) Lebesgue and Sobolev spaces and their norms
for open sets $\omega\subset\R^n$ with $n\geq2$ throughout this paper.
In particular, $L^2(\omega;X)$ denotes square-integrable functions
with values in $X\in\{\mathbb{C},\mathbb{C}^n,\mathbb{H}^n\}$.
We abbreviate $L^2(\omega):=L^2(\omega;\mathbb{C})$ and use the same convention for
Sobolev spaces. The spaces $H^s(\omega)$ for $s>0$ are
understood by interpolation; negative-order spaces are defined by duality 
with respect to the $L^2$-pairing and endowed with the operator norm.
Given $\opA\in L^\infty(\omega;\mathbb{H}^n)$,~set
\begin{align*}
  H^1(\omega,\opA):=\{v\in H^1(\omega):\ \Div(\opA\nabla v)\in L^2(\omega)\}.
\end{align*}
For the definition of Sobolev spaces $H^s(\Gamma)$ with $s\in\mathbb R$
on $(n-1)$-dimensional Lipschitz surfaces $\Gamma\subset\mathbb{R}^n$,
we refer to~\cite[pp.~96--99]{McL:StronglyEllipticSystems2000}.

Let $\omega\subset\R^n$ be a Lipschitz set; a Lipschitz domain is a connected Lipschitz set.
We call $\Gamma\subset\partial\omega$ an isolated boundary component of $\omega\subset\R^n$ 
if $\operatorname{dist}(\partial\omega\setminus\Gamma, \Gamma)>0$.
A Lipschitz interface $\Gamma\subset\overline\omega$ in $\omega\subset\R^n$
is an isolated boundary component $\Gamma\subset \partial G$ of some
(not explicitly referenced) Lipschitz set $G\subset\omega$.
The Dirichlet trace $\traceD[\Gamma]:H^1(\omega)\to H^{1/2}(\Gamma)$ associated 
with a Lipschitz interface $\Gamma\subset\overline \omega$ 
is a bounded linear surjection that extends the classical pointwise trace. 
Its restriction to relatively closed subsets $\Gamma_{\mathrm{D}}\subset \Gamma$~defines
\[
	H^1_{\Gamma_{\mathrm{D}}}(\omega)
	:=\{v\in H^1(\omega):\ \traceD[\Gamma]v=0\ \text{on }\Gamma_{\mathrm{D}}\}.
\]
The (primal) trace space $H^{1/2}(\Gamma)$ is equipped with the minimal extension norm
\begin{align*}
	\|\genvarB\|_{H^{1/2}(\Gamma)}
	\coloneqq 
	\inf_{\substack{\genvarV\in H^1(\omega)\\\traceD[\Gamma] \genvarV=\genvarB}}
	\|\genvarV\|_{H^1(\omega)}
	\qquad\text{for all }\genvarB\in H^{1/2}(\Gamma).
\end{align*}
This natural trace norm is equivalent to classical norms%
~\cite{LM:NonhomogeneousBoundaryValue1972,Gra:OptimalTraceNorms2025}.
For $\opA\in L^\infty(\omega;\mathbb{H}^n)$ clear from the context, the co-normal trace
$\traceN[\omega]:H^1(\omega,\opA)\to H^{-1/2}({\partial \omega})$
is a bounded surjection
onto the  dual space of $H^{1/2}({\partial \omega})$
defined by Green's identity
\begin{align*}
	\left\langle \traceN[\omega]v, \traceD[\partial\omega] w\right\rangle_{
			{\partial\omega}} 
			= \int_\omega \opA\nabla v\cdot \nabla w + \Div (\opA\nabla v)w \d x\qquad\text{for all }v\in
			H^1(\omega,\opA),w\in H^1(\omega)
\end{align*}
with the abbreviation $\left\langle \bullet, \bullet\right\rangle_{{\partial \omega}}$ 
for the duality pairing 
on $H^{-1/2}({\partial \omega})\times H^{1/2}({\partial \omega})$.

The context-sensitive notation $|\bullet|$ may refer to 
the cardinality $|\mathcal{I}|$ of a countable set $\mathcal{I}$ and the absolute value $|z|$
of $z\in\mathbb{C}$.
We abbreviate $[J]\coloneqq\{1,\dots,J\}$ for $J\in\mathbb{N}_0$ and 
write $a\lesssim b$ for $a\le C\,b$ with a constant $C>0$ 
independent of discretisation parameters; $a\approx b$ abbreviates
$a\lesssim b\lesssim a$.
For a matrix $M$, $\operatorname{nnz}(M)$ denotes the number of nonzero entries
and $\|M\|_2$ the spectral norm induced by the (complex)
Euclidean inner product~$\left\langle \bullet,\conj\bullet\right\rangle _2$.
\section{BEM for an abstract single-layer problem}%
\label{sec:BEM_for_the_single-layer_potential}
This section introduces an abstract boundary element discretisation for
the single-layer operator associated to general elliptic differential operators.

\subsection{Abstract model problem}%
\label{sub:Abstract_model_problem}
The linear differential operator is represented by a bounded operator
$\opL:\V\to \V'$ from some Hilbert space $\V$,
equipped with a norm $\|\bullet\|_{\V}$ and a conjugation (denoted by
$\conj{\bullet}$),
into its (linear) dual space $\V'$.
The associated sesquilinear form $\opl:\V\times \V\to \mathbb{C}$ reads for any $v,w\in \V$
as $\opl(v,w) \coloneqq \left\langle \opL v,\conj w\right\rangle_{\V'\times \V}$.
We require $\opL$ to be invertible and, to simplify the discussion,
even assume that $\opl$ is coercive:
There exists $\const{\opL}>0$ with
\begin{align}\label{eqn:L_coercive}
	|\opl(v,v)|
	\geq \const{\opL}\|v\|_{\V}^2
\qquad\text{for all }v\in \V.
\end{align}
Its inverse plays the role of the (generalised) Newton potential and is
denoted by $\opN\coloneqq \opL^{-1}:\V'\to \V$.
The abstract \emph{trace} 
$\traceD:\V\to \B$ is a bounded linear surjection onto the primal trace space $\B$
with $\V_0\coloneqq\ker\traceD\subset \V$ closed. 
Hence $\B$ naturally inherits a Hilbert space structure 
(topologically equivalent to the quotient space $\V/\V_0$)
with norm
\begin{align}\label{eqn:B_norm_def}
	\|\genvarB\|_{\B}\coloneqq\inf_{\substack{v\in \V\\\traceD v = \genvarB}}\|v\|_{\V}
	\qquad\text{for all }\genvarB\in \B.
\end{align}
In particular, $\V=\V_0\oplus \V_0^\perp$ with the orthogonal complement 
$\V_0^\perp\coloneqq\{v\in \V\ :\ v \perp \V_0\}$ and $\traceD:\V_0^\perp \to \B$
is a (by~\eqref{eqn:B_norm_def} isometric) isomorphism.
Hence $\|\traceD\|_{L(\V;\B)}=1$ and~\eqref{eqn:B_norm_def}
is the canonical trace norm associated with the (differential) operator $\opL:\V\to\V'$,
cf.~\cite{Gra:OptimalTraceNorms2025} for PDE-level interpretations and equivalences to standard trace norms.
We abbreviate 
$\|\opL\|\coloneqq \|\opL\|_{L(\V;\V')}$ and 
$\|\opN\|\coloneqq \|\opN\|_{L(\V';\V)}$, and $\|\traceD\|\coloneqq\|\traceD\|_{L(\V;\B)}$ in the following.

\medskip
Given $\datavarB\in\B$, the abstract model transmission problem seeks a solution $u\in \V$ to
\begin{subequations}\label{eqn:AMP}
	\begin{align}\label{eqn:AMPa}
		\opL u 
		&= 0\qquad\text{in }\V_0',\\
		\traceD u &=\datavarB\qquad\text{in }\B.\label{eqn:AMPb}
	\end{align}
\end{subequations}
A function $u\in \V$ with~\eqref{eqn:AMPa} is called $\opL$-harmonic in $\V_0'$,
where~\eqref{eqn:AMPa} is understood as 
\begin{align}\label{eqn:L-harmonic}
	\opl(u,w) = 0
	\qquad\text{for all }w\in \V_0\subset \V.
\end{align}
It is clear that~\eqref{eqn:AMP} is well posed %
and a natural question is that of an explicit representation of the unique solution $u\in \V$
in terms of its boundary data $\datavarB\in \B$.
To this end, introduce the dual trace space $\Bd\coloneq\B'$
and the single-layer potential 
\begin{align}\label{eqn:SL_def}
	\opSL\coloneqq\opN\traceD': \Bd\to \V
\end{align}
that becomes important in the following discussion.
Here $\traceD':\Bd\to \V'$ with abbreviated norm 
$\|\trace'\|\coloneqq\|\trace'\|_{L(\Bd;\V')}$
denotes the adjoint (defined by~\eqref{eqn:adjoint_def}) of the trace $\gamma$.

\begin{theorem}[single-layer potential]\label{thm:SL}
	The single-layer potential~\eqref{eqn:SL_def} is a bounded linear operator
	with norm 
	$\|\opSL\|\coloneqq\|\opSL\|_{L(\Bd;\V)}\leq \|\opN\|$ and, 
	for any $\genvarBd\in \Bd$, 
	$v\coloneqq\opSL\genvarBd\in \V$ is the unique solution to~\eqref{eqn:AMP} for
	$f\coloneqq \traceD v\in \B$.
\end{theorem}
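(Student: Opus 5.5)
The statement splits into three parts, and I would treat them in this order: linearity and boundedness of $\opSL$, $\opL$-harmonicity of $v\coloneqq\opSL\genvarBd$ in $\V_0'$, and uniqueness of the solution to~\eqref{eqn:AMP}.

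\emph{Boundedness.} $\opSL=\opN\traceD'$ is a composition of bounded linear maps, so $\|\opSL\|\le\|\opN\|\,\|\traceD'\|$. Since the adjoint is defined through~\eqref{eqn:adjoint_def} with conjugations that are isometric, the usual argument gives $\|\traceD'\|_{L(\Bd;\V')}=\|\traceD\|_{L(\V;\B)}$. The paper has already noted that $\|\traceD\|=1$, because~\eqref{eqn:B_norm_def} makes $\traceD:\V_0^\perp\to\B$ an isometry. Hence $\|\opSL\|\le\|\opN\|$. The only point needing care is to check, from the conjugation identities, that the adjoint norm really equals the operator norm of $\traceD$ and not merely something bounded by it.

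\emph{Harmonicity.} Take $w\in\V_0$ and use $\opL\opN=\mathrm{id}_{\V'}$:
\[
\opl(v,w)=\langle \opL\opN\traceD'\genvarBd,\conj w\rangle_{\V'\times\V}
=\langle\traceD'\genvarBd,\conj w\rangle_{\V'\times\V}
=\langle \genvarBd,\conj{\traceD w}\rangle_{\Bd\times\B}=0.
\]
The last equality holds because $\traceD w=0$, and conjugation maps $0$ to $0$. This is exactly~\eqref{eqn:L-harmonic}. The condition $\traceD v=f$ holds by the definition of $f$, so $v$ solves~\eqref{eqn:AMP}.

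\emph{Uniqueness.} Suppose $u_1,u_2$ both solve~\eqref{eqn:AMP} with the same $f$, and set $e=u_1-u_2$. Then $\traceD e=0$, so $e\in\V_0$, and $\opl(e,w)=0$ for all $w\in\V_0$. Choosing $w=e$ and applying the coercivity~\eqref{eqn:L_coercive} gives $\const{\opL}\|e\|_{\V}^2\le|\opl(e,e)|=0$, hence $e=0$.

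I do not expect any genuine obstacle. The only delicate step is the bookkeeping with conjugations in the adjoint identity, both in the harmonicity computation and in the norm equality for $\traceD'$.
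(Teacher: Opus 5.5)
Your proposal is correct and follows the paper's own proof essentially step by step. The norm bound comes from $\|\traceD'\|=\|\traceD\|=1$. The chain $\opl(v,w)=\langle\traceD'\genvarBd,\conj w\rangle_{\V'\times\V}=\langle\genvarBd,\conj{\traceD w}\rangle_{\Bd\times\B}=0$ for $w\in\V_0$ gives $\opL$-harmonicity. Uniqueness follows from coercivity~\eqref{eqn:L_coercive} applied to the difference $u_1-u_2\in\V_0$.
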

\begin{proof}
	Observe $\|\traceD\|=1$ from~\eqref{eqn:B_norm_def} 
	and thus $\|\traceD'\|=1$. 
	Hence the operator norm of $\opN$ bounds that of $\opSL=\opN\traceD'$.
	To establish the $\opL$-harmonicity, 
	fix $g\in \Bd$ and set $v\coloneqq\opSL g=\opN\traceD' g$. 
	Then any $w\in \V_0=\ker\traceD$ satisfies
	\begin{align}
	\opl(v,w) 
	=\langle \opL v, \conj{w}\rangle_{\V'\times \V}
	= \langle \traceD' g, \conj{w}\rangle_{\V'\times \V}
	= \langle g, \conj{\traceD w}\rangle_{\Bd\times X}
	= 0 .%
	\end{align}
	Thus $v\in \V$ solves~\eqref{eqn:AMP} for $f\coloneqq\traceD v$. 
	The uniqueness follows from the well-posedness of~\eqref{eqn:AMP}:
	If $v_1,v_2\in \V$ solve~\eqref{eqn:AMP}, then $v_1-v_2\in \V_0$,~\eqref{eqn:L_coercive}, and~\eqref{eqn:L-harmonic}
	imply $v_1=v_2$.
\end{proof}

\Cref{thm:SL} suggests that the solutions to~\eqref{eqn:AMP}
are the images $\opSL\genvarBd\in \V$
of the single-layer potential for some unknown boundary density $\genvarBd\in \Bd$
that depends on the given data $\datavarB\in \B$
through the single-layer (boundary integral) operator 
\begin{align}\label{eqn:S_def}
	\opV\coloneqq \traceD\opSL:\Bd\to \B.
\end{align}
Hence $u\coloneqq \opSL\solvarBd$ solves~\eqref{eqn:AMP} for $\datavarB\in\B$
if and only if the boundary density $\solvarBd\in \Bd$ solves
\begin{align}\label{eqn:SP}
	\opV \solvarBd = \datavarB.
\end{align}
The single-layer problem~\eqref{eqn:SP}
can be interpreted as a generalised boundary integral equation of first kind.
The well-posedness follows from the coercivity~\eqref{eqn:L_coercive} of $\opL$.
\begin{theorem}[single-layer operator]\label{thm:S}
	The single-layer operator~\eqref{eqn:S_def} is bounded 
	with operator norm $\|\opV\|\coloneqq\|\opV\|_{L(\Bd;\B)}\leq \|\opN\|$ and
	coercive with
	\begin{align*}
		| \left\langle \opV\genvarBd, \conj{\genvarBd}\right\rangle_{\B\times \Bd}|
		\geq\frac{\const{\opL}}{\|\opL\|^2}\|\genvarBd\|_{\Bd}^2
		\qquad\text{for all }\genvarBd\in \Bd.
	\end{align*}
	In particular,~\eqref{eqn:SP} admits a unique solution for all $\datavarB\in \B$.
\end{theorem}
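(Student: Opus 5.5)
Boundedness follows from $\opV=\traceD\opN\traceD'$ together with $\|\traceD\|=\|\traceD'\|=1$ (see the proof of \Cref{thm:SL}): this gives $\|\opV\|\leq\|\traceD\|\,\|\opSL\|\leq\|\opN\|$. For coercivity we fix $\genvarBd\in\Bd$ and put $v\coloneqq\opSL\genvarBd=\opN\traceD'\genvarBd$, so that $\opL v=\traceD'\genvarBd$. Applying the adjoint definition~\eqref{eqn:adjoint_def} to $\traceD$ yields
\begin{align*}
	\langle \traceD'\genvarBd,\conj{v}\rangle_{\V'\times\V}=\langle \genvarBd,\conj{\traceD v}\rangle_{\Bd\times\B}.
\end{align*}
The left-hand side equals $\opl(v,v)$. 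Using the conjugation convention on $\Bd=\B'$, the right-hand side is the complex conjugate of $\langle\opV\genvarBd,\conj{\genvarBd}\rangle_{\B\times\Bd}$. Since only absolute values matter, we obtain
\begin{align*}
	|\langle\opV\genvarBd,\conj{\genvarBd}\rangle_{\B\times\Bd}|=|\opl(v,v)|\geq\const{\opL}\|v\|_{\V}^2
\end{align*}
from~\eqref{eqn:L_coercive}.

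It remains to bound $\|v\|_{\V}$ from below by $\|\genvarBd\|_{\Bd}$. Because $\traceD:\V_0^\perp\to\B$ is an isometric isomorphism and $\traceD$ vanishes on $\V_0$, we have
\begin{align*}
	\|\traceD'\genvarBd\|_{\V'}=\sup_{w\in\V}\frac{|\langle\genvarBd,\conj{\traceD w}\rangle|}{\|w\|_{\V}}=\sup_{w\in\V_0^\perp}\frac{|\langle\genvarBd,\conj{\traceD w}\rangle|}{\|\traceD w\|_{\B}}=\|\genvarBd\|_{\Bd},
\end{align*}
where the last step uses that the conjugation on $\B$ is an isometric bijection. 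Hence $\traceD'$ is an isometry. Then $\|\genvarBd\|_{\Bd}=\|\traceD'\genvarBd\|_{\V'}=\|\opL v\|_{\V'}\leq\|\opL\|\,\|v\|_{\V}$. Inserting this into the coercivity bound gives $|\langle\opV\genvarBd,\conj{\genvarBd}\rangle|\geq \const{\opL}\|\opL\|^{-2}\|\genvarBd\|_{\Bd}^2$, as claimed.

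Unique solvability of~\eqref{eqn:SP} then follows from the Lax--Milgram lemma for the bounded coercive sesquilinear form $(\genvarBd,\genvarBdalt)\mapsto\langle\opV\genvarBd,\conj{\genvarBdalt}\rangle_{\B\times\Bd}$ on the Hilbert space $\Bd$; equivalently, coercivity makes $\opV$ injective with closed range, and applying the same estimate to the adjoint gives dense range. The main obstacle is the isometry of $\traceD'$, i.e.\ the lower bound $\|\traceD'\genvarBd\|_{\V'}\geq\|\genvarBd\|_{\Bd}$. This relies on the minimal-extension norm~\eqref{eqn:B_norm_def}, which makes $\traceD$ a quotient map, so that the supremum over $\V$ can be restricted to $\V_0^\perp$. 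The bookkeeping of conjugations in the pairing identity is routine and does not affect the absolute values.
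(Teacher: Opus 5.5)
Your proof is correct and follows the paper's argument: you apply the coercivity of $\opl$ to $v=\opSL g$, then use $\|v\|_{\V}\geq\|\opL\|^{-1}\|\traceD' g\|_{\V'}$, and finally $\|\traceD' g\|_{\V'}\geq\|g\|_{\Bd}$ via the isometric isomorphism $\traceD:\V_0^\perp\to\B$. The only cosmetic difference is in that last step: you restrict the supremum to $\V_0^\perp$ and obtain that $\traceD'$ is an isometry, whereas the paper lifts a norming element $t\in\B$ to $R_{\traceD}t\in\V_0^\perp$ and gets the lower bound directly.
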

\begin{proof}
	The boundedness of $\opV$, with operator norm controlled by $\|\traceD\|=1$ and
	\Cref{thm:SL}, is clear.
	Consider any 
	$\genvarBd\in \Bd$ and set 
	$\genvarV\coloneqq \opSL \genvarBd=\opN\traceD'\genvarBd\in \V$
	such that $\opV\genvarBd=\traceD\genvarV$.
	Observe from $\opL\genvarV=\traceD'\genvarBd$ and
	the coercivity~\eqref{eqn:L_coercive} of $\opL$ that
	\begin{align*}
		| \left\langle \opV\genvarBd, \conj \genvarBd\right\rangle_{\B\times \Bd}|
		= |\left\langle \genvarV, \overline{\traceD'\genvarBd}\right\rangle_{\V\times \V'}|
		= |\opl(\genvarV,\genvarV)|
		\geq\const{\opL}\|\genvarV\|_{\V}^2
		\geq\frac{\const{\opL}}{\|\opL\|^2}\|\traceD'\genvarBd\|_{\V'}^2.
	\end{align*}
	Let $\genvarB\in \B$ be such that
	$\|\genvarBd\|_{\Bd} = 
	\left\langle \genvarBd, \conj{\genvarB}\right\rangle_{\Bd\times \B}$
	and $\|\genvarB\|_{\B}\leq 1$ ($X$ is a Hilbert space).
	Since $\traceD$ is an isometric isomorphism between 
	$\V_0^\perp\subset \V$ and $B$,
	there exists $R_{\traceD}\genvarB\in \V_0^\perp\subset \V$ with 
	\begin{align*}
		\traceD R_{\traceD}\genvarB = \genvarB
		\qquad\text{and}\qquad
		\|R_{\traceD}\genvarB\|_{\V}=\|\genvarB\|_{\B}\leq 1.
	\end{align*}
	(This is saying that the infimum in~\eqref{eqn:B_norm_def} is always attained for some $v\in \V$.)
	Hence the definition of the operator norm in $\V'$ 
	reveal
	\begin{align*}
		\|\traceD'\genvarBd\|_{\V'}
		=\sup_{0\ne\genvarV\in\V}
		\frac{|\left\langle \genvarBd,\conj{\traceD\genvarV}\right\rangle_{\Bd\times \B}|}
		{\|\genvarV\|_{\V}}
		\geq 
		\frac{|\left\langle \genvarBd,\conj{\traceD R_{\traceD}\genvarB}\right\rangle_{\Bd\times \B}|}
		{\|R_{\traceD}\genvarB\|_{\V}}
		\geq \left\langle \genvarBd,\conj{\genvarB}\right\rangle_{\Bd\times \B}
		=\|\genvarBd\|_{\Bd}.
	\end{align*}
	The combination of the displayed estimates conclude the proof.
\end{proof}
\begin{figure}[]
	\centering
	\begin{tikzpicture}[>=latex, font=\normalsize]
	  \node (X) at (0,0) {$\B^s$};
	  \node (W) at (4.2,0) {$\W^s$};
	  \node (B) at (2.1,1.7) {$\Bd^s$};
	  \draw[->] ([yshift=-0.07cm]X.east) -- node[below=2pt] {\small\eqref{eqn:AMP}} ([yshift=-0.07cm]W.west);
	  \draw[->] ([yshift=+0.07cm]W.west) -- node[above=2pt] {\small$\traceD$} ([yshift=+0.07cm]X.east);
	  \draw[<->] (B) -- node[above left] {\small $\opV$} (X);
	  \draw[<->] (B) -- node[above right] {\small $\opSL$} (W);
	\end{tikzpicture}
	\caption{Isomorphism between $\Bd^s,\B^s$, and $\W^s\subset \V$ for all 
		$0\leq s\leq \sigma_{reg}$.}
	\label{fig:abstrace_diagram}
\end{figure}
	\Cref{thm:SL,thm:S} imply that $\opSL:\Bd\to \W$ and $\opV:\Bd\to \B$ 
	are isomorphisms between the spaces $\Bd$, $\B$, and the set of 
	solutions to~\eqref{eqn:AMP} given by
	\begin{align*}
		\W
		\coloneqq\{v\in \V\ :\ \opL v = 0\text{ in } \V_0' \text{ in the sense of }\eqref{eqn:AMPa}\}
		\subset \V.
	\end{align*}
	Associate each of these spaces $Y\in\{\Bd,\B,\W\}$ with a scale $(Y^s)_{s\geq0}$
	of densely embedded Hilbert spaces indexed by some parameter $0<s$ such that
	\begin{align*}
		Y^s\hookrightarrow Y^t\hookrightarrow Y^0\coloneqq Y
		\qquad\text{for all }0<t<s.
	\end{align*}
	Then $\opSL$ and $\opV$ naturally extend to $\Bd^s$. Let $\sigma_{\mathrm{reg}}>0$ be such that
	\begin{align*}
		\opSL:\Bd^s\to \W^s
		\quad\text{and}\quad\opV:\Bd^s\to \B^s
		\qquad\text{for all }0\leq s\leq \sigma_{\mathrm{reg}}
	\end{align*}
	remain isomorphisms on the shifted scale up to $\sigma_{\mathrm{reg}}\geq0$.
	This leads to the commuting diagram depicted in \Cref{fig:abstrace_diagram} and 
	describes an abstract regularity shift for the solution to~\eqref{eqn:SP} 
	(without proof).%
	\begin{theorem}[regularity]\label{thm:regularity of solutions}
		For any $0\leq s\leq\sigma_{\mathrm{reg}}$ and
		$f\in \B^s$, the unique solution $\solvarBd\in \Bd$
		to~\eqref{eqn:SP} satisfies 
		$\solvarBd\in \Bd^s$ and $\opSL\solvarBd\in \W^s$.
		In particular, there exists $\Const{reg}(s)>0$ with
		\begin{align*}
			\Const{reg}(s)^{-1}\|\opSL\solvarBd\|_{\W^s}
			\leq\|\solvarBd\|_{\Bd^s}
			\leq \Const{reg}(s)\|\datavarB\|_{\B^s}.
		\end{align*}
	\end{theorem}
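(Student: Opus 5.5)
This statement is essentially a direct consequence of the standing assumption that defines $\sigma_{\mathrm{reg}}$. The plan is to unwind that assumption and invoke the bounded inverse theorem.

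By definition of $\sigma_{\mathrm{reg}}$, for every $0\leq s\leq\sigma_{\mathrm{reg}}$ the extensions $\opV:\Bd^s\to\B^s$ and $\opSL:\Bd^s\to\W^s$ are isomorphisms, i.e.\ bounded bijections between Hilbert spaces. The first step is to identify the solution. Given $f\in\B^s$, set $\tilde b\coloneqq(\opV|_{\Bd^s})^{-1}f\in\Bd^s$. Since $\Bd^s\hookrightarrow\Bd$ and the extended operator agrees with $\opV$ on $\Bd^s$, we get $\opV\tilde b=f$ in $\B$. \Cref{thm:S} gives uniqueness of the solution of \eqref{eqn:SP} in $\Bd$, so $\solvarBd=\tilde b\in\Bd^s$. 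Consequently $\opSL\solvarBd\in\W^s$, because $\opSL$ maps $\Bd^s$ into $\W^s$. Here one should remark that ``naturally extend'' means that the action on $\Bd^s\subset\Bd$ coincides with the original operator, which is implicit in the setup.

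The second step gives the norm bounds. By the bounded inverse theorem, $(\opV|_{\Bd^s})^{-1}\in L(\B^s;\Bd^s)$, so
\[
\|\solvarBd\|_{\Bd^s}\leq\|(\opV|_{\Bd^s})^{-1}\|_{L(\B^s;\Bd^s)}\|f\|_{\B^s}.
\]
Likewise, boundedness of $\opSL:\Bd^s\to\W^s$ gives
\[
\|\opSL\solvarBd\|_{\W^s}\leq\|\opSL\|_{L(\Bd^s;\W^s)}\|\solvarBd\|_{\Bd^s}.
\]
Taking $\Const{reg}(s)$ as the maximum of these two operator norms (and $1$) yields both inequalities of the chain.

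The only delicate point is the compatibility of the extensions with the original operators on the dense subspaces, which justifies identifying $\tilde b$ with $\solvarBd$. Everything else is an application of the open mapping theorem, and the genuine analytic content is deferred to the verification of $\sigma_{\mathrm{reg}}>0$ for concrete PDEs in \Cref{sub:Elliptic regularity}.
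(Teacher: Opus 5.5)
Your proposal is correct and matches the paper, which states this theorem explicitly ``without proof'' because it follows directly from the standing assumption that $\opSL:\Bd^s\to\W^s$ and $\opV:\Bd^s\to\B^s$ are isomorphisms for $0\leq s\leq\sigma_{\mathrm{reg}}$, together with the uniqueness from \Cref{thm:S}. Your argument makes that step explicit: you identify $\solvarBd$ with $(\opV|_{\Bd^s})^{-1}\datavarB$ and take $\Const{reg}(s)$ as the larger of the two operator norms.
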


\begin{remark}[single-layer potential]\label{rem:SL}
	Operator representations of the classical layer potentials in terms of the Newton
	potential and trace operators already appear in%
	~\cite{Cos:BoundaryIntegralOperators1988} and are nowadays treated in
	textbooks~\cite{McL:StronglyEllipticSystems2000,SS:BoundaryElementMethods2011}.
	The definition of a single-layer potential by~\eqref{eqn:SL_def} 
	goes back at least to~\cite{Bar:LayerPotentialsGeneral2017} 
	and has been utilised to derive skeleton integral equations 
	for variable coefficient Helmholtz problems in%
	~\cite{FHS:SkeletonIntegralEquations2024,GHS:StableSkeletonIntegral2025}.
	Abstract definitions of the double layer potential 
	derived from the classical operator representation $\opDL=\opN\gamma_{\mathrm{N}}'$ 
	with the dual co-normal trace $\gamma_{\mathrm{N}}'$
	require a more involved functional analytical setup as discussed 
	in~\cite[Sec.~4.3]{GHS:StableSkeletonIntegral2025}, see also~\cite{Bar:LayerPotentialsGeneral2017}.
\end{remark}
\begin{remark}[jump relations]\label{rem:commuting_diagram}
	The application to second-order problems in 
	\Cref{sec:BEM for general single-layer operators} leads to $\V\subset H^1(\Omega)$
	and \Cref{fig:abstrace_diagram} ($s=0$) encapsulates
	the jump relations for the single-layer potential $\opSL$ in the following sense: 
	the Dirichlet jump vanishes by
	$\operatorname{im}\opSL=\W\subset H^1(\Omega)$ and the Neumann jump is its inverse 
	$\opSL^{-1}:\W\to B$, cf.~\cite[Thm.~4.3]{GHS:StableSkeletonIntegral2025}
	or~\Cref{lem:jump_relation} below for details.
\end{remark}

\subsection{Quasi-optimal boundary element discretisation}%
\label{sub:Abstract_quasi_optimal_BEM}
The boundary element method (BEM) for the discretisation of~\eqref{eqn:SP}
is based on a finite-dimensional conforming subspace $\Bdh\subset \Bd$.
The corresponding Galerkin discretisation leads to the natural BEM formulation of~\eqref{eqn:SP}
that seeks a discrete solution $\solvarBdh\in\Bdh$ to
\begin{align}\label{eqn:cDSP}
	\left\langle \opV \solvarBdh, \overline{\genvarBdh}\right\rangle_{\B\times \Bd}
	= \left\langle \datavarB,\overline{\genvarBdh}\right\rangle_{\B\times \Bd}
	\qquad\text{for all }\genvarBdh\in \Bdh.
\end{align}%
The Galerkin orthogonality implies the quasi-optimality of~\eqref{eqn:cDSP}:
its (discrete) solution is a quasi-best approximation of 
the unique solution $\solvarBd\in \Bd$ to~\eqref{eqn:SP}.
\begin{theorem}[quasi-best approximation of classical BEM]
	\label{thm:qb}
	The classical BEM formulation~\eqref{eqn:cDSP} is well-posed and its 
	unique solution $\solvarBdh\in \Bdh$ satisfies
	\begin{align*}
		\|\solvarBd-\solvarBdh\|_{\Bd}
		\leq \frac{\|\opN\|\|\opL\|^2}{\const{\opL}}
		\,\min_{\genvarBdh\in \Bdh}\|\solvarBd-\genvarBdh\|_{\Bd}.
	\end{align*}
\end{theorem}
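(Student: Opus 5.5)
This is the standard Céa argument, applied with the coercivity and boundedness of $\opV$ from \Cref{thm:S}. Abbreviate $\alpha\coloneqq\const{\opL}/\|\opL\|^2$.

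\emph{Well-posedness.} Restricting the sesquilinear form $(\solvarBdh,\genvarBdh)\mapsto\langle\opV\solvarBdh,\conj{\genvarBdh}\rangle_{\B\times\Bd}$ to $\Bdh\times\Bdh$ keeps the coercivity bound of \Cref{thm:S}, with the same constant $\alpha$. Hence the linear map $\Bdh\to\Bdh'$ induced by~\eqref{eqn:cDSP} is injective. Since $\Bdh$ is finite-dimensional, it is also surjective, so~\eqref{eqn:cDSP} has a unique solution. (Alternatively, Lax--Milgram with $|\cdot|$-coercivity gives the same conclusion.)

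\emph{Galerkin orthogonality.} The exact solution satisfies $\opV\solvarBd=\datavarB$ by~\eqref{eqn:SP}. Subtracting~\eqref{eqn:cDSP} gives $\langle\opV(\solvarBd-\solvarBdh),\conj{\genvarBdh}\rangle_{\B\times\Bd}=0$ for all $\genvarBdh\in\Bdh$.

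\emph{Error estimate.} Fix an arbitrary $\genvarBdh\in\Bdh$. Since $\solvarBdh-\genvarBdh\in\Bdh$, coercivity and Galerkin orthogonality give
\begin{align*}
\alpha\|\solvarBdh-\genvarBdh\|_{\Bd}^2
\leq|\langle\opV(\solvarBdh-\genvarBdh),\conj{(\solvarBdh-\genvarBdh)}\rangle_{\B\times\Bd}|
=|\langle\opV(\solvarBd-\genvarBdh),\conj{(\solvarBdh-\genvarBdh)}\rangle_{\B\times\Bd}|.
\end{align*}
The right-hand side is bounded by $\|\opN\|\,\|\solvarBd-\genvarBdh\|_{\Bd}\|\solvarBdh-\genvarBdh\|_{\Bd}$, using $\|\opV\|\leq\|\opN\|$ from \Cref{thm:S}. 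Dividing by $\|\solvarBdh-\genvarBdh\|_{\Bd}$ yields
\begin{align*}
\|\solvarBdh-\genvarBdh\|_{\Bd}\leq\frac{\|\opN\|}{\alpha}\|\solvarBd-\genvarBdh\|_{\Bd}.
\end{align*}

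\emph{Constant.} The triangle inequality then gives $\|\solvarBd-\solvarBdh\|_{\Bd}\leq\bigl(1+\|\opN\|/\alpha\bigr)\|\solvarBd-\genvarBdh\|_{\Bd}$. The stated constant is $\|\opN\|/\alpha$ without the ``$1+$''. To obtain it, we use that in a Hilbert space the Galerkin projection $P$ is a bounded idempotent, $P\ne0,I$. By the Kato/Xu--Zikatanov identity $\|I-P\|=\|P\|$, so
\begin{align*}
\|\solvarBd-\solvarBdh\|_{\Bd}=\|(I-P)(\solvarBd-\genvarBdh)\|_{\Bd}\leq\|P\|\,\|\solvarBd-\genvarBdh\|_{\Bd}\leq\frac{\|\opN\|}{\alpha}\|\solvarBd-\genvarBdh\|_{\Bd}.
\end{align*}
Here $\|P\|\leq\|\opN\|/\alpha$ follows from the stability bound above (take $\genvarBdh=0$). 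Taking the minimum over $\genvarBdh$, which is attained since $\Bdh$ is finite-dimensional, completes the proof.

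The only delicate step is this sharp constant. It needs the Hilbert-space structure of $\Bd$ (as the dual of the Hilbert space $\B$) and the idempotent-norm identity, which fails in general Banach spaces. Everything else is routine.
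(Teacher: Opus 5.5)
Your proof is correct, but it takes a detour that the paper's one-line appeal to the C\'ea lemma avoids. You use coercivity only on $\Bdh$ (for $\solvarBdh-\genvarBdh$), so you need a triangle inequality and then the Kato/Xu--Zikatanov identity $\|I-P\|=\|P\|$ to remove the ``$1+$''.

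\Cref{thm:S} gives coercivity of $\opV$ on all of $\Bd$. Set $\alpha=\const{\opL}/\|\opL\|^2$ as you did. You can apply coercivity directly to the error $\solvarBd-\solvarBdh\in\Bd$ and use Galerkin orthogonality in the second slot:
\begin{align*}
\alpha\|\solvarBd-\solvarBdh\|_{\Bd}^2
\leq\bigl|\langle\opV(\solvarBd-\solvarBdh),\conj{\solvarBd-\solvarBdh}\rangle_{\B\times\Bd}\bigr|
=\bigl|\langle\opV(\solvarBd-\solvarBdh),\conj{\solvarBd-\genvarBdh}\rangle_{\B\times\Bd}\bigr|
\leq\|\opN\|\,\|\solvarBd-\solvarBdh\|_{\Bd}\|\solvarBd-\genvarBdh\|_{\Bd}.
\end{align*}
This gives the constant $\|\opN\|/\alpha$ at once, with no triangle inequality.

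So your closing remark is not accurate in this setting. You say the sharp constant is the delicate step and needs the Hilbert structure of $\Bd$ plus the idempotent-norm identity. In fact, full-space coercivity yields it without any Hilbert-space argument.

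Your route does buy generality of a different kind. It only needs stability on the discrete space. The identity $\|I-P\|=\|P\|$ then turns any discrete bound $\|P\|\leq\|\opV\|/\alpha_h$ into quasi-optimality with that same constant. That is the right tool when coercivity or inf-sup stability is available only on $\Bdh$, but it is more machinery than this theorem needs.
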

\begin{proof}
	This is the C\'ea lemma 
	with $\|\opV\|\leq\|\opN\|$ from \Cref{thm:S}.
\end{proof}

The numerical realisation of~\eqref{eqn:cDSP} hinges on the possibility to evaluate
the single-layer operator $\opV$ on $\Bdh$.
However, this appears only feasible in special situations, 
where explicit analytical representations of $\opV$ are
available, cf.~\Cref{sub:Relation_to_classical_BEM} for an example.
A general BEM formulation therefore has to replace $\opV$ by some suitable approximation
$\opVh$ and it is desirable that the resulting formulation 
allows best-approximations as in \Cref{thm:qb}.

To this end, we consider a conforming subspace $\Vh\subset \V$ and the associated
Galerkin projector $P_\diamT:\V\to \Vh$ that maps any $\genvarV\in \V$ to the unique solution of
\begin{align}\label{eqn:Ph_def}
	\ell(P_\diamT \genvarV,\genvarVh) = \ell(\genvarV, \genvarVh)
	\qquad\text{for all }\genvarVh\in \Vh.
\end{align}
The discrete single-layer operator $\opVh:\Bd\to \B$ is then 
obtained by replacing the Newton potential
$\opN$ in~\eqref{eqn:S_def} with its Galerkin projection $P_\diamT\opN$, namely
\begin{align}\label{eqn:Sh_def}
	\opVh\coloneqq \traceD P_\diamT\opN\traceD'=\traceD P_\diamT\opSL.
\end{align}
The improvement over~\eqref{eqn:cDSP} is that~\eqref{eqn:Sh_def} allows an 
(algebraic) realisation in terms of the volume stiffness matrix 
as discussed in \Cref{sub:Algebraic formulation} below and
can therefore be interpreted as a computable counterpart of $\opV$ in~\eqref{eqn:cDSP}
associated to $\Vh\subset \V$.
The corresponding BEM formulation associated to $\opVh$ seeks a solution $\solvarBdh\in \Bdh$ to
\begin{align}\label{eqn:DSP}
	\left\langle \opVh \solvarBdh, \conj{\genvarBdh}\right\rangle_{\B\times \Bd}
= \left\langle \datavarB,\conj{\genvarBdh}\right\rangle_{\B\times \Bd}
	\qquad\text{for all }\genvarBdh\in \Bdh.
\end{align}

Before we state an analogous quasi-best approximation for~\eqref{eqn:DSP}
in \Cref{thm:qb} below, a discussion on the discrete consistency error
introduced by the approximation of the single-layer operator is of order, namely
\begin{align}\label{ass:simple_layer_approx}
	\errCon\coloneqq
	\|\opV-\opVh\|_{L(\Bdh;\Bdh')}
	\coloneqq
	\sup_{0\ne\genvarBdh,\genvarBdhalt\in\Bdh}
	\frac{\left|\left\langle (\opV-\opVh)\genvarBdh,\conj{\genvarBdhalt}\right\rangle_{\B\times
	\Bd}\right|}{\|\genvarBdh\|_{\Bd}\|\genvarBdhalt\|_{\Bd}}.
\end{align}
The point is that typical BEM spaces $\Bdh$ are more regular than $\Bd$ and
this is expressed~by
\begin{align*}
	\Bdh\subset \Bd^{s_*}\subset \Bd
	\qquad\text{for some }0\leq s_*.
\end{align*}
Hence~\Cref{fig:abstrace_diagram} suggests that $\Vh$ should approximate 
at least functions in $\W^{s_*}$ well.
For $0\leq s\leq s_*$,
we describe the approximation property of $\Vh$ in abstract notation by
\begin{align}\label{eqn:Vh_approx}
	\eta_s(\Vh)\coloneqq
	\sup_{\substack{w\in\W^s\\\|w\|_{\W^s}=1}}\inf_{\genvarVh\in\Vh}\|w-\genvarVh\|_{\V}
\end{align}
and encode a Bernstein-type property of $\Bdh$ by
\begin{align}\label{eqn:Bdh_inverse}
	\beta_s(\Bdh)\coloneqq
	\inf_{\substack{\genvarBdh\in\Bdh\\\|\genvarBdh\|_{\Bd^{s}}=1}}\|\genvarBdh\|_{\Bd}.
\end{align}
In the applications, $\eta_s(\Vh)$ and~$\beta_s(\Bdh)$
relate to $s$-powers of the maximal mesh-sizes 
of the underlying meshes associated to $\Vh$ and $\Bdh$ as discussed in \Cref{rem:consistency} below.
In this way,~\eqref{eqn:Vh_approx}--\eqref{eqn:Bdh_inverse} 
measure the resolution of the
volume and boundary discretisation, and their ratio controls 
the consistency error~\eqref{ass:simple_layer_approx} by the following key lemma.
\begin{lemma}[consistency error]\label{lem:consistency}
	Any $0\leq s\leq \max\{s^*,\sigma_{\mathrm{reg}}\}$ satisfies
	\begin{align}\label{eqn:SL_approx}
		\errCon
		\leq \Const{con}(s)\frac{\eta_s(\Vh)}{\beta_s(\Bdh)}
		\qquad\text{for }\Const{con}(s)
		\coloneqq \Const{reg}(s) \|\opL\|/\const{\opL}.
	\end{align}
\end{lemma}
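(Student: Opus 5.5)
Fix $g_h,h_h\in\Bdh$ and rewrite the bilinear form of the consistency error using the operator definitions $\opV=\traceD\opSL$ and $\opVh=\traceD P_\diamT\opSL$. Set $v\coloneqq\opSL g_h$ and $w\coloneqq\opSL \conj{h_h}$; by the adjoint relation~\eqref{eqn:adjoint_def} and $\opL\opSL=\traceD'$,
\begin{align*}
	\left\langle (\opV-\opVh)g_h,\conj{h_h}\right\rangle_{\B\times\Bd}
	=\left\langle \traceD'\conj{h_h}, \conj{\conj{(v-P_\diamT v)}}\right\rangle_{\V'\times\V}
\end{align*}
which, after sorting out conjugations, becomes a sesquilinear expression $\opl(\tilde w,\, v-P_\diamT v)$ or $\opl(v-P_\diamT v,\tilde w)$ with $\tilde w$ the single-layer potential of $h_h$ (possibly for the adjoint problem). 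The order is dictated by the conjugation conventions, and I will fix it so that Galerkin orthogonality~\eqref{eqn:Ph_def} applies. The cleaner route is to write $\langle (\opV-\opVh)g_h,\conj{h_h}\rangle=\langle \traceD' h_h,\conj{(v-P_\diamT v)}\rangle$-type identities and use $\traceD'h_h=\opL \opSL h_h$.

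The key step is a Céa-type argument in the spirit of Aubin--Nitsche, but without duality: by~\eqref{eqn:Ph_def}, $v-P_\diamT v$ is $\opl$-orthogonal to $\Vh$ in the second argument. Coercivity~\eqref{eqn:L_coercive} and boundedness then give the quasi-optimality $\|v-P_\diamT v\|_\V\le (\|\opL\|/\const{\opL})\inf_{v_\diamT\in\Vh}\|v-v_\diamT\|_\V$. This uses the standard estimate $\const{\opL}\|e\|^2\le|\opl(e,e)|=|\opl(e,v-v_\diamT)|$.

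Next bound the pairing. Since the error $e=v-P_\diamT v$ enters, I estimate $|\langle \traceD' h_h,\conj e\rangle|\le\|h_h\|_{\Bd}\|\traceD e\|_\B\le \|h_h\|_\Bd\|e\|_\V$ using $\|\traceD'\|=\|\traceD\|=1$. This gives one factor $\|h_h\|_{\Bd}$, as required, with no approximation needed on the $h_h$ side. Then
\begin{align*}
	\inf_{v_\diamT}\|v-v_\diamT\|_\V\le \eta_s(\Vh)\|v\|_{\W^s}
	\le \eta_s(\Vh)\Const{reg}(s)\|g_h\|_{\Bd^s}
	\le \Const{reg}(s)\frac{\eta_s(\Vh)}{\beta_s(\Bdh)}\|g_h\|_\Bd.
\end{align*}
Here $v=\opSL g_h\in\W^s$ uses \Cref{thm:regularity of solutions} (the first inequality there, with $\opSL:\Bd^s\to\W^s$ bounded by $\Const{reg}(s)$). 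The admissibility $g_h\in\Bd^s$ uses $\Bdh\subset\Bd^{s_*}$, and the last step uses the definition~\eqref{eqn:Bdh_inverse}, i.e.\ $\|g_h\|_{\Bd^s}\le\|g_h\|_\Bd/\beta_s(\Bdh)$. Combining gives~\eqref{eqn:SL_approx} with $\Const{con}(s)=\Const{reg}(s)\|\opL\|/\const{\opL}$.

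The main obstacle is bookkeeping rather than analysis. I need to check that the conjugations and the adjoint convention really reduce the consistency form to $\langle\traceD' h_h,\conj{\,\cdot\,}\rangle$ applied to $\traceD$ of the Galerkin error. I also need the range of $s$: this needs $s\le\sigma_{\mathrm{reg}}$ for the regularity shift and $s\le s_*$ for $\Bdh\subset\Bd^s$. If $\beta_s(\Bdh)=0$ for $s>s_*$, the bound is trivial anyway, so the stated range is harmless.
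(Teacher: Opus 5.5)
Your proof is correct and is essentially the paper's argument: C\'ea's lemma for $(1-P_d)\opSL g_h$, then $\eta_s(\Vh)$ combined with $\|\opSL g_h\|_{\W^s}\le \Const{reg}(s)\|g_h\|_{\Bd^s}$ and the Bernstein quantity $\beta_s(\Bdh)$, and finally a direct bound on the pairing via $\|\traceD'\|=\|\traceD\|=1$. The Galerkin-orthogonality detour through $\opl(\tilde w,v-P_d v)$ in your first paragraph is not needed, and you rightly drop it. One caveat: your argument, like the paper's, only covers $0\le s\le\min\{s_*,\sigma_{\mathrm{reg}}\}$, so the $\max$ in the statement should be read as $\min$. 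Your closing claim that the larger range is ``harmless'' does not hold. For $s>\sigma_{\mathrm{reg}}$ the regularity shift defining $\Const{reg}(s)$ is not available. For $\Bdh\not\subset\Bd^s$, the infimum defining $\beta_s(\Bdh)$ runs only over $\Bdh\cap\Bd^s$, so $\beta_s(\Bdh)$ need not vanish, while the inequality $\beta_s(\Bdh)\|g_h\|_{\Bd^s}\le\|g_h\|_{\Bd}$ fails for $g_h\in\Bdh\setminus\Bd^s$.
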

\begin{proof}
	Let $\genvarBdh\in\Bdh\subset \Bd^{s}$ be arbitrary.
	C\'ea's lemma, the approximation property~\eqref{eqn:Vh_approx} of $\Vh$, 
	and the boundedness of $\opSL:\Bd^{s}\to \W^s$ with operator norm
	$\Const{reg}(s)$ provide
	\begin{align*}%
		\|(1-P_\diamT)\opSL\genvarBdh&\|_{\V}
		\leq\eta_s(\Vh) \frac{\|\opL\|}{\const{\opL}}\|\opSL\genvarBdh\|_{\W^s}
		\leq\Const{con}(s)\eta_s(\Vh) \|\genvarBdh\|_{\Bd^s}.
	\end{align*}
	A Bernstein-type inequality~\eqref{eqn:Bdh_inverse} reveals
	$\beta_s(\Bdh)\|(1-P_\diamT)\opSL\genvarBdh\|_{\V}\leq\Const{con}(s)\eta_s(\Vh)\|\genvarBdh\|_{\Bd}$.
	Hence the identity $\opV-\opVh = \traceD(1-P_\diamT)\opSL$ and $\|\traceD'\|=1$
	verify for any $\genvarBdhalt\in\Bdh$ that
	\begin{align*}
		\left|\left\langle (\opV-\opVh)\genvarBdh,
		\conj{\genvarBdhalt}\right\rangle_{\B\times\Bd} \right|
		&\leq \|(1-P_\diamT)\opSL \genvarBdh\|_{\V}
		\|\traceD'\genvarBdhalt\|_{\V'}
		\leq 
		\Const{con}(s)\frac{\eta_s(\Vh)}{\beta_s(\Bdh)}
		\|\genvarBdh\|_{\Bd}
		\|\genvarBdhalt\|_{\Bd}.\tag*{\qedhere}
	\end{align*}
\end{proof}

The abstract version of \textbf{\Cref{thm:A}} establishes the well-posedness and quasi-best approximation
for~\eqref{eqn:DSP} if the consistency error is small,
which is guaranteed for $\eta_s(\Vh)\ll\beta_s(\Bdh)$ by \Cref{lem:consistency}.
Recall the exact solution $\solvarBd\in\Bd$ from~\eqref{eqn:SP}.
\begin{theorem}[quasi-best approximation]\label{thm:quasi_best}
	If $\errCon\leq\varepsilon_{\opL}\coloneqq\const{\opL}/(2\|\opL\|^2)$, 
	then the discrete problem~\eqref{eqn:DSP}
	admits a unique solution $\solvarBdh\in\Bdh$ for any $\datavarB\in\B$ and
	\begin{align}\label{eqn:Sh_quasi_best}
		\Const{qb}^{-1}\|\solvarBd-\solvarBdh\|_{\Bd}
		\leq \min_{\genvarBdh\in \Bdh}\|\solvarBd-\genvarBdh\|_{\Bd}
			+  \min_{\genvarVh\in \Vh}\|\opSL\solvarBd-\genvarVh\|_{\V}.
	\end{align}
	The condition $\errCon\leq \varepsilon_{\mathrm{\opL}}$ holds, in particular,
	for any pair $\Vh\subset\V$ and $\Bdh\subset \Bd$ 
	that satisfies $\Const{con}(s)\eta_s(\Vh)\leq \varepsilon_{\opL}\beta_s(\Bdh)$ 
	for some $0\leq
	s\leq\max\{s_*,\sigma_{\mathrm{reg}}\}$.
	The constant $\Const{qb}>0$ is independent of the trace $\traceD$ and 
	exclusively depends on $\|\opL\|,\|\opN\|$, and
	$\const{\opL}$.
\end{theorem}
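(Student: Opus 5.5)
The plan is a Strang-type argument on top of the coercivity of \Cref{thm:S}.

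\emph{Step 1: discrete coercivity and well-posedness.} For $\genvarBdh\in\Bdh$ I write $\opVh=\opV-(\opV-\opVh)$. Then \Cref{thm:S} and the definition~\eqref{ass:simple_layer_approx} of $\errCon$ give
\[
|\langle \opVh\genvarBdh,\conj{\genvarBdh}\rangle_{\B\times\Bd}|
\geq \Bigl(\tfrac{\const{\opL}}{\|\opL\|^2}-\errCon\Bigr)\|\genvarBdh\|_{\Bd}^2
\geq \tfrac{\const{\opL}}{2\|\opL\|^2}\|\genvarBdh\|_{\Bd}^2 .
\]
Since $\Bdh$ is finite-dimensional, this injectivity yields existence and uniqueness of $\solvarBdh$ for every $\datavarB\in\B$. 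The sufficient condition $\Const{con}(s)\eta_s(\Vh)\leq\varepsilon_{\opL}\beta_s(\Bdh)$ gives $\errCon\leq\varepsilon_{\opL}$ directly by \Cref{lem:consistency}.

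\emph{Step 2: error splitting.} Fix an arbitrary $\genvarBdh\in\Bdh$ and set $e_\diamP\coloneqq\solvarBdh-\genvarBdh$. I use the discrete coercivity from Step 1 and $\datavarB=\opV\solvarBd$, which gives for all test functions $\genvarBdhalt\in\Bdh$
\[
\langle\opVh e_\diamP,\conj{\genvarBdhalt}\rangle
=\langle\opV\solvarBd-\opVh\genvarBdh,\conj{\genvarBdhalt}\rangle
=\langle\opVh(\solvarBd-\genvarBdh),\conj{\genvarBdhalt}\rangle+\langle(\opV-\opVh)\solvarBd,\conj{\genvarBdhalt}\rangle .
\]
Testing with $\genvarBdhalt=e_\diamP$ and dividing by $\|e_\diamP\|_{\Bd}$ gives
\[
\tfrac{\const{\opL}}{2\|\opL\|^2}\|e_\diamP\|_{\Bd}\le \|\opVh\|_{L(\Bd;\B)}\|\solvarBd-\genvarBdh\|_{\Bd}+\|(\opV-\opVh)\solvarBd\|_{\B}.
\]
Here I bound $\|\opVh\|\le\|P_\diamT\|\,\|\opSL\|\le(\|\opL\|/\const{\opL})\|\opN\|$, using the Galerkin stability of $P_\diamT$ from~\eqref{eqn:L_coercive} and $\|\traceD\|=1$.

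\emph{Step 3: the consistency term at the exact solution.} This is the step where the second best-approximation term appears, and it is the one place where care is needed. The key point is that $\errCon$ is \emph{not} used here, because $\solvarBd\notin\Bdh$. Instead I use the identity $\opV-\opVh=\traceD(1-P_\diamT)\opSL$ together with $\|\traceD\|=1$ and C\'ea's lemma for $P_\diamT$:
\[
\|(\opV-\opVh)\solvarBd\|_{\B}\le\|(1-P_\diamT)\opSL\solvarBd\|_{\V}\le\tfrac{\|\opL\|}{\const{\opL}}\min_{\genvarVh\in\Vh}\|\opSL\solvarBd-\genvarVh\|_{\V}.
\]

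\emph{Step 4: conclusion.} The triangle inequality $\|\solvarBd-\solvarBdh\|_{\Bd}\le\|\solvarBd-\genvarBdh\|_{\Bd}+\|e_\diamP\|_{\Bd}$ and the infimum over $\genvarBdh$ give~\eqref{eqn:Sh_quasi_best}. The resulting constant is
\[
\Const{qb}=\max\Bigl\{1+\tfrac{2\|\opN\|\|\opL\|^3}{\const{\opL}^2},\ \tfrac{2\|\opL\|^3}{\const{\opL}^2}\Bigr\},
\]
which depends only on $\|\opL\|$, $\|\opN\|$ and $\const{\opL}$, and not on $\traceD$.
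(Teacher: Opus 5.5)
Your proof is correct, but it organises the estimate differently from the paper. You run the classical first Strang lemma. You apply the discrete coercivity of $\opVh$ on $\Bdh$ (margin $\const{\opL}\|\opL\|^{-2}-\errCon\geq\varepsilon_{\opL}$) to $\solvarBdh-\genvarBdh$ for an arbitrary $\genvarBdh\in\Bdh$. You combine it with the global stability bound $\|\opVh\|_{L(\Bd;\B)}\leq\|\opL\|\|\opN\|/\const{\opL}$ and a single consistency term $(\opV-\opVh)\solvarBd$, controlled via $\opV-\opVh=\traceD(1-P_\diamT)\opSL$ and C\'ea's lemma. A closing triangle inequality finishes the argument.

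The paper instead applies the coercivity of the exact operator $\opV$ to the full error $\solvarBd-\solvarBdh$. It splits the test function as $(\solvarBd-\Pi\solvarBd)+(\Pi\solvarBd-\solvarBdh)$, where $\Pi$ is the $\Bd$-orthogonal projection onto $\Bdh$. Galerkin orthogonality then turns the second part into three consistency terms, evaluated at $\solvarBd$, at $\solvarBd-\Pi\solvarBd$, and at $e_\diamP=\solvarBdh-\Pi\solvarBd$. The Pythagoras identity gives $\|e_\diamP\|_{\Bd}\leq\|\solvarBd-\solvarBdh\|_{\Bd}$. Hence the $\errCon\|e_\diamP\|_{\Bd}^2$ term is absorbed on the left and no final triangle inequality is needed.

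Both routes rest on the same ingredients: the identity $\opV-\opVh=\traceD(1-P_\diamT)\opSL$ and the Galerkin stability of $P_\diamT$ with constant $\|\opL\|/\const{\opL}$. They also give constants of the same order: the paper's $2\|\opL\|^3\max\{1,2\|\opN\|\}/\const{\opL}^2$ versus your $\max\{1+2\|\opN\|\|\opL\|^3/\const{\opL}^2,\,2\|\opL\|^3/\const{\opL}^2\}$.

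Your version is shorter and needs no orthogonal projection or Pythagoras identity. It also handles the perturbed operator $\opVht$ of \Cref{cor:quasi-best approximation} with the same few lines:
\begin{itemize}
\item the coercivity margin drops by $\varepsilon_\diamT$;
\item the stability bound grows by $\varepsilon_\diamT$;
\item the consistency term picks up $(\opVh-\opVht)\solvarBd$, of norm at most $\varepsilon_\diamT\|\solvarBd\|_{\Bd}$.
\end{itemize}
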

\begin{proof}
	Let $\errCon\leq \varepsilon_{\opL}$ 
	and observe that this follows from \Cref{lem:consistency}
	provided that $\Const{con}(s)\eta_s(\Vh)\leq \varepsilon_{\opL}\beta_s(\Bdh)$ holds 
	for some $0\leq s\leq\max\{s_*,\sigma_{\mathrm{reg}}\}$.
	Deduce from~\Cref{thm:S} that $\opVh$ is coercive on $\Bdh$ 
	with coercivity constant $\const{\opL}\|\opL\|^{-2}-\varepsilon_{\mathrm{con}}\geq 
	\varepsilon_{\opL}>0$.
	Hence~\eqref{eqn:DSP} admits a unique solution $\solvarBdh\in \Bdh$ that is considered in the following.

	Let $\Pi\solvarBd\in \Bdh$ denote the best approximation of the 
	exact solution $\solvarBd\in \Bd$ in the Hilbert space $\Bdh$ characterised by
	\begin{align}\label{eqn:quasi_best_orth_Bdh}
		\min_{\genvarBdh\in\Bdh}\|\solvarBd-\genvarBdh\|_{\Bd}^2
		=\|\solvarBd-\Pi\solvarBd\|_{\Bd}^2
		=\|\solvarBd-\solvarBdh\|_{\Bd}^2 - \|\solvarBdh-\Pi\solvarBd\|_{\Bd}^2.
	\end{align}
	Employ 
	the coercivity of $\opV$ from \Cref{thm:S} for 
	\begin{align}\label{eqn:quasi_best_split}
		\const{\opL}\|\opL\|^{-2}\|\solvarBd-\solvarBdh\|_{\Bd}^2
		&\leq \left|\left\langle \opV(\solvarBd-\solvarBdh), 
		\conj{\solvarBd-\Pi\solvarBd}\right\rangle_{\B\times \Bd}\right|
		+ \left|\left\langle \opV(\solvarBd-\solvarBdh), 
		\conj{\Pi\solvarBd-\solvarBdh}\right\rangle_{\B\times \Bd}\right|.
	\end{align}
	A Cauchy-Schwarz inequality bounds the first term in the right-hand side 
	of~\eqref{eqn:quasi_best_split} by
	\begin{align}\label{eqn:quasi_best_split_1}
		\left|\left\langle \opV(\solvarBd-\solvarBdh), 
		\conj{\solvarBd-\Pi\solvarBd}\right\rangle_{\B\times \Bd}\right|
		\leq\|\opV\|\|\solvarBd-\solvarBdh\|_{\Bd}
		\|\solvarBd-\Pi\solvarBd\|_{\Bd}.
	\end{align}
	To control the second term in the right-hand side of~\eqref{eqn:quasi_best_split}, 
	use the Galerkin orthogonality 
	$\left\langle \opV \solvarBd - \opVh \solvarBdh, \conj{e_\diamP}\right\rangle_{\B\times \Bd}=0$ for
	$e_\diamP\coloneqq \solvarBdh-\Pi\solvarBd\in\Bdh$ to obtain
	\begin{align}\nonumber
		\langle\opV
		&(\solvarBd-\solvarBdh),\conj{\Pi\solvarBd-\solvarBdh}\rangle_{\B\times \Bd}
		=\left\langle (\opV-\opVh)\solvarBdh, \conj{e_\diamP}\right\rangle_{\B\times \Bd}\\
		&= \left\langle (\opV-\opVh)\solvarBd, \conj{e_\diamP}\right\rangle_{\B\times \Bd}
		- \left\langle (\opV-\opVh)(\solvarBd-\Pi\solvarBd), \conj{e_\diamP}\right\rangle_{\B\times \Bd}
		+ \left\langle (\opV-\opVh)e_\diamP, \conj{e_\diamP}\right\rangle_{\B\times \Bd}.
		\label{eqn:quasi_best_split_2}
	\end{align}
	Recall the identity $\opV-\opVh = \traceD(1-P_\diamT)\opSL$,
	and observe from the Galerkin orthogonality of $P_\diamT$ that
	$\const{\opL}\|(1-P_\diamT)\opSL(\solvarBd-\Pi\solvarBd)\|_{\V}
	\leq \|\opL\|\|\opSL\|\|\solvarBd-\Pi\solvarBd\|_{\Bd}$.
	This and~\eqref{ass:simple_layer_approx} control the three terms 
	in the final equality of~\eqref{eqn:quasi_best_split_2} separately by
	\begin{align}\label{eqn:quasi_best_split_3a}
		\left|\left\langle (\opV-\opVh)\solvarBd, \conj{e_\diamP}\right\rangle_{\B\times \Bd}\right|
		&\leq \|(1-P_\diamT)\opSL\solvarBd\|_{\V}\|\traceD'e_\diamP\|_{\V'},\\
		\left|\left\langle (\opV-\opVh)\,(\solvarBd-\Pi\solvarBd), 
			\conj{e_\diamP}\right\rangle_{\B\times \Bd}\right|
		&\leq \frac{\|\opL\|}{\const{\opL}}\|\opSL\|\,
		\|\solvarBd-\Pi\solvarBd\|_{\Bd}\|\traceD'e_\diamP\|_{\V'},\\
		\left|\left\langle (\opV-\opVh)e_\diamP, 
			\conj{e_\diamP}\right\rangle_{\B\times \Bd}\right|
		&\leq \errCon\,\|e_\diamP\|_{\Bd}^2.\label{eqn:quasi_best_split_3c}
	\end{align}
	\Cref{thm:SL,thm:S} provide
	$\|\opV\|+\|\opSL\|\leq 2\|\opN\|$.
	Since $\|\traceD'e_\diamP\|_{\V'}\leq \|e_\diamP\|_{\Bd}\leq \|\solvarBd-\solvarBdh\|_{\Bd}$
	from $\|\traceD\|= 1$ and~\eqref{eqn:quasi_best_orth_Bdh},
	the combination of~\eqref{eqn:quasi_best_split}--\eqref{eqn:quasi_best_split_3c} 
	and $\const{\opL}\leq\|\opL\|$
	establish
	\begin{align*}
		(\const{\opL}\|\opL\|^{-2} - \errCon)\|\solvarBd-\solvarBdh\|_{\Bd}^2
		&\leq \left(2\frac{\|\opL\|}{\const{\opL}}\|\opN\|\|\solvarBd-\Pi\solvarBd\|_{\Bd}
		+ \|(1-P_\diamT)\opSL\solvarBd\|_{\V}\right)\|\solvarBd-\solvarBdh\|_{\Bd}.
	\end{align*}
	Hence C\'ea's lemma for $P_\diamT$ and the 
	Pythagoras identity~\eqref{eqn:quasi_best_orth_Bdh}
	reveal
	\begin{align*}
		(\const{\opL}\|\opL\|^{-2} - \errCon)\|\solvarBd-\solvarBdh\|_{\Bd}
		&\leq \frac{\|\opL\|}{\const{\opL}}
		\left(2\|\opN\|\min_{\genvarBdh\in \Bdh}\|\solvarBd-\genvarBdh\|_{\Bd}
		+ \min_{\genvarVh\in \Vh}\|\opSL\solvarBd-\genvarVh\|_{\V}\right).
	\end{align*}
	This and $\errCon\leq \const{\opL}/(2\|\opL\|^2)$ conclude the proof for 
	$\Const{qb}\coloneqq2\|\opL\|^3\max\{1,2\|\opN\|\}/\const{\opL}^2$.
\end{proof}

\begin{remark}[consistency]\label{rem:consistency}
	Typical finite and boundary element discretisations
	$\Vh$ and $\Bdh$ over (quasi-uniform) meshes of $\Omega$ and $\Sigma$ with maximal mesh-sizes $\diamT$
	and $\diamP$ satisfy~\eqref{eqn:Vh_approx}--\eqref{eqn:Bdh_inverse} with $\eta_s(\Vh)\approx\diamT^s$ 
	and $\beta_s(\Bdh)\approx\diamP^s$ for a certain range of $0\leq s\leq s^*$.
	A natural compatibility condition for $\Vh$ with the boundary discretisation is that
	$\diamT\in\mathcal{O}(\diamP)$ should be (at least) of the same order as $\diamP$.
	\Cref{thm:quasi_best} verifies that a linear relation indeed suffices to obtain well-posedness
	and quasi-optimality of~\eqref{eqn:DSP} if $\max\{s_*,\sigma_{\mathrm{reg}}\}>0$.
\end{remark}%
\subsection{Algebraic BEM formulation}%
\label{sub:Algebraic formulation}
To expose the matrix structure of~\eqref{eqn:DSP},
fix bases $\{\genvarVh[j]\}_{j=1}^{\NVh}\subset \Vh$ 
and $\{\genvarBdh[j]\}_{j=1}^{\NBdh}\subset \Bdh$ 
of size $\NVh=\dim \Vh$ and $\NBdh=\dim \Bdh$.
Define the $\Vh$-stiffness matrix $L_\diamT\in\mathbb C^{\NVh\times\NVh}$,
the connection matrix $R_{\diamT}^{\diamP}\in \mathbb C^{\NVh\times\NBdh}$, 
and the right-hand side vector $RHS\in \mathbb C^{\NBdh}$
by
\begin{align*}
	L_{\diamT, jk}
		\coloneqq \opl\left(\genvarVh[k], \genvarVh[j]\right),
			\quad
	R_{\diamT, jm}^{\diamP}
		\coloneqq \left\langle\traceD\genvarVh[j], \overline{\genvarBdh[m]}\right\rangle_{\B\times \Bd}
		\quad\text{and}\quad
	RHS_{m}
	\coloneqq \left\langle\datavarB, \overline{\genvarBdh[m]}\right\rangle_{\B\times \Bd}
\end{align*}
for all $j,k=1,\dots,\NVh$ and $m=1,\dots,\NBdh$.
These relate to their continuous counterparts by
the basis-induced isomorphisms $J_\diamT:\mathbb C^{\NVh}\to \Vh\subset \V$ 
and $J_\diamP:\mathbb C^{\NBdh}\to \Bdh\subset\Bd$ with
\begin{align*}
	J_\diamT x 
	&= \sum^{\NVh}_{j=1} x_j \genvarVh[j]\in \Vh
	&&\text{for all }x\in\mathbb C^{\NVh},\\
	J_\diamP z 
	&= \sum^{\NBdh}_{j=1} z_j \genvarBdh[j]\in \Bdh
	&&\text{for all }z\in\mathbb C^{\NBdh}.
\end{align*}
The identification of the dual spaces of $\mathbb C^{\NVh}$ and $\mathbb C^{\NBdh}$ 
with themselves through the Euclidean inner product
$\left\langle \bullet,\conj\bullet\right\rangle_2$
leads to the dual operators
$J_\diamT':\V'\to\mathbb C^{\NVh}$ and $J_\diamP':\B\to\mathbb C^{\NBdh}$ 
defined by~\eqref{eqn:adjoint_def}, e.g., 
$\left\langle J_\diamT'f,\conj x\right\rangle_2 
= \left\langle f, \conj{J_\diamT x}\right\rangle_{\V'\times \V}$
for all $f\in \V', x\in\mathbb C^{\NVh}$.
In particular,
\begin{align*}
	L_\diamT = J_\diamT'\opL\,J_\diamT
	,\qquad
	R_\diamT^{\diamP} = J_\diamT'\traceD'\,J_\diamP,
	\qquad\text{and}\qquad
	RHS = J_\diamP'\datavarB.
\end{align*}
Moreover, the Galerkin projection~\eqref{eqn:Ph_def}
satisfies $P_\diamT= J_\diamT L_\diamT^{-1} J_\diamT'\opL$ and hence
$P_\diamT\opN = J_\diamT L_\diamT^{-1} J_\diamT'$.
With $N_\diamT\coloneqq L_\diamT^{-1}$,
this reveals the representation of 
$\opVh$ from~\eqref{eqn:Sh_def} as
\begin{align}\label{eqn:Sh_def_alt}
	\opVh = \traceD J_\diamT N_\diamT J_\diamT'\traceD':\Bd\to \B.
\end{align}
Using~\eqref{eqn:Sh_def_alt} and the connection matrix $R^\diamP_\diamT$, 
the discrete solution $\solvarBdh=J_\diamP z\in \Bdh$ to~\eqref{eqn:DSP}
is characterised by its coefficient vector $z\in\mathbb C^{\NBdh}$ solving
\begin{align}\label{eqn:Vhef}
	V_{\diamT}^{\diamP} z = RHS
	\qquad\text{with}\qquad
V_{\diamT}^{\diamP}\coloneqq (R_{\diamT}^{\diamP})^{H}N_{\diamT}R_{\diamT}^\diamP.
\end{align}
Note that the definition in (\ref{eqn:Vhef}) provides a fully discrete description of $V_{d}^{h}%
\in\mathbb{C}^{n_{h}\times n_{h}}$. 
\begin{remark}[localisation and dimension reduction]\label{rem:abstract_localisation}
	In typical applications, 
	$\Vh$ and $\Bdh$ admit locally supported (e.g., finite and boundary element) bases.
	Then the connection matrix $R_{\diamT}^{\diamP}$ is sparse: its columns only couple to
	volume basis functions with nonzero trace in $X$. 
	Consequently, in the factorisation~\eqref{eqn:Vhef} of $V_\diamT^\diamP$,
	the inverse stiffness matrix $N_{\diamT}$ is accessed only through its action on
	the range of $R_{\diamT}^{\diamP}$, i.e., the degrees of freedom with nonzero trace. 
	Hence the discrete problem
	\eqref{eqn:Vhef} retains the characteristic dimension reduction of boundary integral formulations: 
	instead of solving on a volume discretisation of size $\NVh$,
	only a nonlocal discrete equation of size $\NBdh$ on the interface has to be solved.
\end{remark}

Besides the dimension reduction described in \Cref{rem:abstract_localisation}, the
factorisation of $V_\diamT^\diamP$ in \eqref{eqn:Vhef} also supports efficient 
\emph{approximate} realisations by replacing 
$N_\diamT=L_\diamT^{-1}$ with a computable surrogate, avoiding an exact inversion.
Assume that $\Nh\in \mathbb C^{\NVh\times\NVh}$ satisfies
\begin{align}\label{eqn:Nh_approx_error}
	\|J_\diamT\|^2\|\Nh - N_\diamT\|_{2}\leq \varepsilon_\diamT,
\end{align}
where 
the operator norm $\|J_\diamT\|$ of 
$J_\diamT:\mathbb{C}^{\NVh}\to \V$ accounts for the scaling induced by the chosen basis of $\Vh$.
This leads to a numerical approximation of $\opVh$ (and therefore of $\opV$)~by
\begin{align}\label{eqn:Sht_def}
	\opVht \coloneqq \traceD J_\diamT \Nh J_\diamT'\traceD':\Bd\to \B
\end{align}
with an analogous quasi-best approximation property of the corresponding single-layer discretisation.
Recall $\errCon$ from~\eqref{ass:simple_layer_approx},
and $\varepsilon_{\opL}$ and $\Const{qb}$ from \Cref{thm:quasi_best}.%
\begin{cor}[quasi-best approximation]\label{cor:quasi-best approximation}
	Consider any $\datavarB\in \B$ with solution $\solvarBd\in \Bd$ to~\eqref{eqn:SP}.
	If $\varepsilon_\diamT+\errCon\leq \varepsilon_{\opL}$,
	then the discrete problem~\eqref{eqn:DSP} 
	with $\opVh$ replaced by $\opVht$ 
	has a unique solution $\solvarBdh\in \Bdh$ with
	\begin{align*}
		\Const{qb}^{-1}\|\solvarBd-\solvarBdh\|_{\Bd}
		\leq \min_{\genvarBdh\in \Bdh}\|\solvarBd-\genvarBdh\|_{\Bd}
			+ \min_{\genvarVh\in \Vh}\|\opSL\solvarBd-\genvarVh\|_{\V}
			+\varepsilon_\diamT\,\|\solvarBd\|_{\Bd}.
	\end{align*}
\end{cor}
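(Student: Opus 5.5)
The plan is to reduce \Cref{cor:quasi-best approximation} to the argument of \Cref{thm:quasi_best}, treating $\opVht-\opVh$ as an additional consistency perturbation. The first step is a bound on this perturbation. For $\genvarBdh,\genvarBdhalt\in\Bdh$, \eqref{eqn:Sh_def_alt} and \eqref{eqn:Sht_def} give
\begin{align*}
	\left\langle (\opVh-\opVht)\genvarBdh,\conj{\genvarBdhalt}\right\rangle_{\B\times\Bd}
	= \left\langle (N_\diamT-\Nh)J_\diamT'\traceD'\genvarBdh,\conj{J_\diamT'\traceD'\genvarBdhalt}\right\rangle_2 .
\end{align*}
Since $\|J_\diamT'\|=\|J_\diamT\|$ and $\|\traceD'\|=1$, this quantity is bounded by $\|J_\diamT\|^2\|\Nh-N_\diamT\|_2\|\genvarBdh\|_\Bd\|\genvarBdhalt\|_\Bd\le\varepsilon_\diamT\|\genvarBdh\|_\Bd\|\genvarBdhalt\|_\Bd$, using \eqref{eqn:Nh_approx_error}. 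The same estimate holds for all arguments in $\Bd$, not only in $\Bdh$. It follows that $\|\opV-\opVht\|_{L(\Bdh;\Bdh')}\le\errCon+\varepsilon_\diamT\le\varepsilon_{\opL}$.

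The second step is well-posedness. By \Cref{thm:S}, $\opVht$ is coercive on $\Bdh$ with constant at least $\const{\opL}\|\opL\|^{-2}-\errCon-\varepsilon_\diamT\ge\varepsilon_{\opL}$, so the modified discrete problem has a unique solution $\solvarBdh$.

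The third step repeats the error analysis of \Cref{thm:quasi_best}. I keep the best approximation $\Pi\solvarBd$, the Pythagoras identity \eqref{eqn:quasi_best_orth_Bdh}, and the splitting \eqref{eqn:quasi_best_split}. The Galerkin orthogonality now reads $\langle\opV\solvarBd-\opVht\solvarBdh,\conj{e_\diamP}\rangle=0$. The second term therefore becomes $\langle(\opV-\opVht)\solvarBdh,\conj{e_\diamP}\rangle$. I would write $\opV-\opVht=(\opV-\opVh)+(\opVh-\opVht)$ and expand only the first part exactly as in \eqref{eqn:quasi_best_split_2}--\eqref{eqn:quasi_best_split_3c}; this reproduces the earlier terms. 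The $e_\diamP$ term is controlled by $\errCon\|e_\diamP\|^2$.

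For the new part I would write $\solvarBdh=\solvarBd-(\solvarBd-\Pi\solvarBd)+e_\diamP$ and use the bound from the first step on all of $\Bd$. This gives at most
\begin{align*}
	\varepsilon_\diamT\bigl(\|\solvarBd\|_\Bd+\|\solvarBd-\Pi\solvarBd\|_\Bd+\|e_\diamP\|_\Bd\bigr)\|e_\diamP\|_\Bd .
\end{align*}
The term $\varepsilon_\diamT\|e_\diamP\|^2$ is absorbed into the left-hand side together with $\errCon\|e_\diamP\|^2$, since $\|e_\diamP\|\le\|\solvarBd-\solvarBdh\|$; this is exactly where the hypothesis $\varepsilon_\diamT+\errCon\le\varepsilon_{\opL}$ is used. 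The middle term is harmless because $\varepsilon_\diamT\le\varepsilon_{\opL}\le\|\opN\|\|\opL\|/\const{\opL}$, so it merges with the best-approximation term. The first term produces the new contribution $\varepsilon_\diamT\|\solvarBd\|_\Bd$.

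Dividing by $\|\solvarBd-\solvarBdh\|$ and applying C\'ea's lemma for $P_\diamT$ then gives the claimed estimate. The only delicate point is the constant. The positive coefficient on the left-hand side is $(\const{\opL}\|\opL\|^{-2}-\errCon-\varepsilon_\diamT)\ge\varepsilon_{\opL}$. The coefficient of $\varepsilon_\diamT\|\solvarBd\|$ on the right is $1$, which is dominated by $\|\opL\|/\const{\opL}\ge1$. With these two observations the same $\Const{qb}=2\|\opL\|^3\max\{1,2\|\opN\|\}/\const{\opL}^2$ works, possibly after slightly enlarging the factor in front of the best-approximation term using $\max\{1,2\|\opN\|\}$.
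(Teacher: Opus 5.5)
Your proposal follows the paper's proof. The paper also bounds $\|\opVh-\opVht\|_{L(\Bd;\B)}\le\varepsilon_\diamT$ via \eqref{eqn:Sh_def_alt} and \eqref{eqn:Sht_def}. It then gets coercivity of $\opVht$ on $\Bdh$ with constant $\const{\opL}\|\opL\|^{-2}-\errCon-\varepsilon_\diamT\ge\varepsilon_{\opL}$. Finally it reruns the proof of \Cref{thm:quasi_best}, where the modified Galerkin orthogonality adds the single extra term $\langle(\opVh-\opVht)\solvarBdh,\conj{e_\diamP}\rangle_{\B\times\Bd}$. All of this is correct in your write-up.

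The one step that does not give what you claim is the constant. You split $\solvarBdh=\solvarBd-(\solvarBd-\Pi\solvarBd)+e_\diamP$ and estimate the three pieces separately. This creates the extra term $\varepsilon_\diamT\|\solvarBd-\Pi\solvarBd\|_{\Bd}\|e_\diamP\|_{\Bd}$. After dividing by $\const{\opL}\|\opL\|^{-2}-\errCon-\varepsilon_\diamT\ge\varepsilon_{\opL}$, it adds $\varepsilon_\diamT/\varepsilon_{\opL}$ (up to $1$) to the coefficient of $\min_{\genvarBdh\in\Bdh}\|\solvarBd-\genvarBdh\|_{\Bd}$. That coefficient already equals $\Const{qb}$ when $2\|\opN\|\ge1$, and there is no slack to absorb the addition. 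For example, if $\opl$ is a multiple $c\le2$ of the inner product on $\V$, then $\|\opL\|=\const{\opL}$ and $\|\opV\|=\|\opSL\|=\|\opN\|$, and your chain gives $\Const{qb}+\varepsilon_\diamT/\varepsilon_{\opL}$. So the claim ``the same $\Const{qb}$ works, possibly after enlarging'' is not established, and the corollary fixes $\Const{qb}$.

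The split is exactly the lossy step. Note that $e_\diamP-(\solvarBd-\Pi\solvarBd)=\solvarBdh-\solvarBd$, and the two pieces are orthogonal with $\|\solvarBd-\Pi\solvarBd\|^2_{\Bd}+\|e_\diamP\|^2_{\Bd}=\|\solvarBd-\solvarBdh\|^2_{\Bd}$. The paper keeps them together. It uses $\|\solvarBdh\|_{\Bd}\le\|\solvarBd\|_{\Bd}+\|\solvarBd-\solvarBdh\|_{\Bd}$ and $\|e_\diamP\|_{\Bd}\le\|\solvarBd-\solvarBdh\|_{\Bd}$ to bound the new term by $\varepsilon_\diamT(\|\solvarBd\|_{\Bd}+\|\solvarBd-\solvarBdh\|_{\Bd})\|\solvarBd-\solvarBdh\|_{\Bd}$. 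The quadratic part is then absorbed by the hypothesis $\varepsilon_\diamT+\errCon\le\varepsilon_{\opL}$, and the coefficient of $\|\solvarBd-\Pi\solvarBd\|_{\Bd}$ is unchanged. The only new contribution is $\varepsilon_{\opL}^{-1}\varepsilon_\diamT\|\solvarBd\|_{\Bd}=2\|\opL\|^2\const{\opL}^{-1}\varepsilon_\diamT\|\solvarBd\|_{\Bd}\le\Const{qb}\,\varepsilon_\diamT\|\solvarBd\|_{\Bd}$. With this change the stated $\Const{qb}$ holds exactly.
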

\begin{proof}
	The representations~\eqref{eqn:Sh_def_alt} and~\eqref{eqn:Sht_def} of
	$\opVh$ and $\opVht$ reveal
	for any $\genvarBd,\genvarBdalt\in \Bd$ that
	\begin{align*}
		\left|\left\langle(\opVh-\opVht)\genvarBd,
			\conj{\genvarBdalt}\right\rangle_{\B\times \Bd}\right|
		&=\left|\left\langle (N_\diamT-\Nh)J_\diamT'\traceD'\genvarBd,
		\conj{J_\diamT'\traceD'\genvarBdalt}\right\rangle_{2}\right|\\
		&\leq \|N_\diamT-\Nh\|_{2}\|J_\diamT\|^2\|\genvarBd\|_{\Bd}\|\genvarBdalt\|_{\Bd}
	\end{align*}
	with $1=\|\traceD'\|$ and $\|J_\diamT\| = \|J_\diamT'\|$ in the last step.
	This and~\eqref{eqn:Nh_approx_error} provide
	\begin{align}\label{eqn:opVh_approx}
		\|\opVh-\opVht\|_{L(\Bd;\B)}
		\leq \varepsilon_\diamT.
	\end{align}
	Repeat the first steps in the proof of \Cref{thm:quasi_best} with the same notation
	verbatim until~\eqref{eqn:quasi_best_split_2}, where
	the Galerkin orthogonality now holds for $\opVht$ instead of $\opVh$ and leads
	to the additional term 
	$\big\langle (\opVht-\opVh)\solvarBdh,\conj{e_\diamP}\big\rangle_{\B\times \Bd}$
	with $e_\diamP=\solvarBdh-\Pi\solvarBd$.
	This term is controlled by~\eqref{eqn:opVh_approx},
	a triangle inequality, and $\|e_\diamP\|_{\Bd}\leq\|\solvarBd-\solvarBdh\|_{\Bd}$
	as before by
	\begin{align*}
		\left| \left\langle (\opVht-\opVh)\solvarBdh, 
		\conj{e_\diamP}\right\rangle_{\B\times \Bd} \right|
		\leq \varepsilon_\diamT\, \left(\|\solvarBd\|_{\Bd}+\|\solvarBd-\solvarBdh\|_{\Bd}\right)
		\|\solvarBd-\solvarBdh\|_{\Bd}.
	\end{align*}
	The remaining parts of the proof of \Cref{thm:quasi_best} 
	apply with straightforward modifications; further
	details are omitted.
\end{proof}

\section{BEM for general second-order problems}%
\label{sec:BEM for general single-layer operators}
This section introduces a boundary element method (BEM) for the discretisation of
the single-layer operator associated to general linear second-order differential operators.

\subsection{Model second-order problem}%
\label{sub:Model problem}
Consider a Lipschitz domain $\Omega\subset\R^n$ for $n\geq 2$ 
with relatively closed %
$\GammaD\subset\partial\Omega$ and set $\V\coloneqq H^{1}_{\GammaD}(\Omega)$.
Given $F\in \V'$, the second-order model problem seeks a solution $u\in \V$~to
\begin{subequations}\label{eqn:2nd_PDE}
	\begin{align}\label{eqn:2nd_PDEa}
		-\Div(\opA\nabla u)  + \mathbf{b}\cdot \nabla u + c u
		&=F
		&&\text{in }\Omega,\\\label{eqn:2nd_PDEb}
		\traceN[\Omega] u
		&= Gu
		&&\text{on }\Gamma_{\mathrm{N}}\coloneqq\partial\Omega\setminus\GammaD
	\end{align}
\end{subequations}
with coefficients $\opA\in L^\infty(\Omega; \mathbb{H}^n), \mathbf{b}\in L^\infty(\Omega;\C^n), c\in
L^\infty(\Omega)$, and a bounded linear operator 
$G\in L(\V;H^{-1/2}(\Gamma_{\mathrm{N}}))$.
Standard examples of~\eqref{eqn:2nd_PDEb}
are $G\equiv0$ for Neumann, $G(u) = r u$ with $r\in \mathbb{C}$ for Robin,
and $G(u) = \operatorname{DtN}(u)$ for transparent (also called
Dirichlet-to-Neumann)%
~\cite{Fen:FiniteElementMethod1983,KG:ExactNonreflectingBoundary1989,
GS:DirichlettoNeumannOperatorHelmholtz2025}
boundary conditions on $\Gamma_{\mathrm{N}}$.
The leading coefficient $\opA$ is Hermitian and strongly elliptic: 
There exists $a_{\min}>0$ with
\begin{align*}
	a_{\min}|\xi|^2\leq \Re \xi^{H}\opA(x)\xi
	\qquad\text{for all }\xi\in\mathbb{C}^n
	\text{ and almost every }x\in\Omega.
\end{align*}
Further conditions on the coefficients are encapsulated in the coercivity 
requirement~\eqref{eqn:L_coercive} for the 
sesquilinear form $\opl:\V\times \V\to \mathbb{C}$
and for $\opL:\V\to \V'$
associated to~\eqref{eqn:2nd_PDE}, namely
\begin{align*}%
	\opl(\genvarV,\genvarValt)
	&\coloneqq \left\langle \opL \genvarV, \conj{\genvarValt}\right\rangle_{\V'\times \V}
	\coloneqq\int_{\Omega}\opA\nabla \genvarV\cdot \nabla \conj{\genvarValt}
	+ \left(\mathbf{b}\cdot \nabla \genvarV + c \genvarV\right)\,\conj{\genvarValt} \d x
	- \left\langle Gv, \traceD[\partial\Omega]\conj w\right\rangle_{\partial\Omega}
\end{align*}
for all $\genvarV,\genvarValt\in \V$.
The bounded inverse $\opN\coloneqq\opL^{-1}:\V'\to \V$
generalises the classical Newton potential%
~\cite[Sec.~3]{SS:BoundaryElementMethods2011}
even without assuming the existence of a Green's function for~\eqref{eqn:2nd_PDE}.

A Lipschitz interface $\Sigma\subset\Omega\cup\GammaN$ 
induces the trace space $\B\coloneqq H^{1/2}(\Sigma)$
as the image of the classical trace map 
$\traceS:H^1_{\Gamma_{\mathrm{D}}}(\Omega)\to H^{1/2}(\Sigma)$.
We assume for simplicity that either
\begin{enumerate}[label=(\alph*)]
	\item $\Sigma\subset\Omega$ decomposes $\Omega=\Omega_+\cup \Sigma\cup\Omega_-$ 
	into disjoint Lipschitz sets $\Omega_\pm\subset\Omega$, or
	\item $\Sigma\subset\GammaN(\subset\partial\Omega)$ is 
	an isolated boundary component of $\Omega_+\coloneqq \Omega$ 
	and we set $\Omega_-\coloneqq\emptyset$.
\end{enumerate}
The abstract transmission problem~\eqref{eqn:AMP} takes the form:
Given $\datavarB\in \B=H^{1/2}(\Sigma)$, find a (weak) solution $u\in \V$ to 
\begin{subequations}\label{eqn:MP}
	\begin{align}\label{eqn:MPa}
		-\Div(\opA\nabla u)  + \mathbf{b}\cdot \nabla u + c u
		&=0
		&&\text{in }\Omega\setminus\Sigma,\\\label{eqn:MPb}
		\traceN[\Omega]u
		&= Gu
		&&\text{on }\GammaN\setminus\Sigma,\\
		\traceS u 
		&=\datavarB
		&&\text{on }\Sigma.\label{eqn:MPc}
	\end{align}
\end{subequations}
The well-posedness is clear from the discussion in \Cref{sub:Abstract_model_problem}
and guarantees a unique solution $u\in \V$ to~\eqref{eqn:MP}.
Indeed, the single-layer potential and operator
\begin{align*}
	\opSL&=\opN\traceS':\Bd\to \V,\\
	\opV&=\traceS\opSL:\Bd\to \B
\end{align*}
defined on the dual trace space $\Bd=\B'=H^{-1/2}(\Sigma)$
characterise $u=\opSL\solvarBd$ in terms of the unique solution
$\solvarBd\in\Bd$ to the single-layer equation~\eqref{eqn:SP}.
Recall from \Cref{rem:commuting_diagram} that the single-layer potential is bijective onto the space
$\W$ characterised by
\begin{align}\label{eqn:W_def}
	\W=\left\{v\in \V\ :\ 
		\begin{aligned}
			\Div(\opA\nabla v)  &= \mathbf{b}\cdot \nabla v + c v 
								&&\text{in }\Omega\setminus\Sigma,\\
			\traceN[\Omega]v&= Gv &&\text{on }\GammaN\setminus\Sigma
		\end{aligned}
	\right\}\subset H^1(\Omega\setminus\Sigma,\opA).
\end{align}
Its inverse extends to the Neumann jump 
$\jumpN\bullet: H^1(\Omega\setminus\Sigma,\opA)\to H^{-1/2}(\Sigma)$ defined by
\begin{align*}
	\jumpN v\coloneqq 
	\left(\traceN[\Omega_+]v\right)|_{\Sigma} + \left(\traceN[\Omega_-]v\right)|_{\Sigma}
	\in H^{-1/2}(\Sigma)
\end{align*}
with the convention $\traceN[\Omega_-]v\coloneqq -Gv$  if $\Omega_-=\emptyset$.

\begin{lemma}[jump relation]\label{lem:jump_relation}
	Any $\genvarBd\in\Bd$ satisfies the \emph{jump relation}
	$\jumpN{\opSL\genvarBd} = \genvarBd$ and any $v\in \W$ satisfies 
	the single-layer \emph{Green's representation formula} 
	$v = \opSL\jumpN v$.
\end{lemma}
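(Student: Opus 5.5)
The plan is to test the defining relation $\opL v = \traceS'\genvarBd$ (with $v\coloneqq\opSL\genvarBd$) against suitable $w\in\V$, split the volume integral over $\Omega_\pm$, and identify the resulting boundary terms through Green's identity for the co-normal traces $\traceN[\Omega_\pm]$. The representation formula then follows from the jump relation together with uniqueness for \eqref{eqn:AMP}.

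\emph{Regularity.} Fix $\genvarBd\in\Bd$ and set $v\coloneqq\opSL\genvarBd$. By \Cref{thm:SL}, $v\in\V$ is $\opL$-harmonic in $\V_0'$. Testing with $w\in C_c^\infty(\Omega_\pm)\subset\V_0$ gives $\Div(\opA\nabla v)=\mathbf b\cdot\nabla v+cv\in L^2(\Omega_\pm)$ in the distributional sense. Hence $v|_{\Omega_\pm}\in H^1(\Omega_\pm,\opA)$, the co-normal traces $\traceN[\Omega_\pm]v$ are defined, and $v\in\W$. In case (b) the boundary condition on $\GammaN\setminus\Sigma$ is obtained in the same way.

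\emph{Jump relation.} Take any $w\in\V$ and write
\[
\opl(v,w)=\sum_\pm\int_{\Omega_\pm}\opA\nabla v\cdot\nabla\conj w+(\mathbf b\cdot\nabla v+cv)\conj w\d x-\left\langle Gv,\traceD[\partial\Omega]\conj w\right\rangle_{\partial\Omega}.
\]
Applying Green's identity on each $\Omega_\pm$ and using the PDE in $\Omega_\pm$ cancels the volume terms. What remains is $\sum_\pm\langle\traceN[\Omega_\pm]v,\traceD[\partial\Omega_\pm]\conj w\rangle_{\partial\Omega_\pm}$ minus the $G$-term. The parts of $\partial\Omega_\pm$ lying on $\GammaD$ drop out because $w=0$ there. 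The parts on $\GammaN\setminus\Sigma$ cancel against the $G$-term by the boundary condition satisfied by $v$, which was derived from harmonicity in the first step. In case (b) the part on $\Sigma$ also leaves $-\langle Gv,\conj w\rangle_\Sigma$, and this is exactly the convention $\traceN[\Omega_-]v\coloneqq-Gv$.

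It follows that $\opl(v,w)=\langle\jumpN v,\conj{\traceS w}\rangle_{\Sigma}$. On the other hand, $\opl(v,w)=\langle\traceS'\genvarBd,\conj w\rangle_{\V'\times\V}=\langle\genvarBd,\conj{\traceS w}\rangle$. Since $\traceS$ is surjective onto $\B$, this yields $\jumpN v=\genvarBd$.

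\emph{Representation formula.} Let $v\in\W$ and set $\genvarBd\coloneqq\jumpN v\in\Bd$. The computation of the previous step only used $v\in\W$, so it gives $\opL v=\traceS'\jumpN v$ in $\V'$. Therefore $v=\opN\traceS'\jumpN v=\opSL\jumpN v$.

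\emph{Main obstacle.} The delicate step is the localisation of the duality pairings. The traces $\traceN[\Omega_\pm]v$ live in $H^{-1/2}(\partial\Omega_\pm)$, and the definition of $\jumpN v$ requires restricting them to $\Sigma$ and to $\GammaN\setminus\Sigma$. Such restrictions are meaningful here only because $\Sigma$ is an isolated boundary component, i.e.\ at positive distance from the rest of $\partial\Omega_\pm$. I would justify the splitting with a smooth cutoff that equals $1$ near $\Sigma$ and vanishes near $\partial\Omega_\pm\setminus\Sigma$, applied to the test functions $w$. This decomposes each pairing into a $\Sigma$-part and a remainder, which can then be treated separately.
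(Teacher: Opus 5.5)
Your proposal is correct and follows essentially the paper's argument. Green's identity on $\Omega_\pm$, combined with the characterisation \eqref{eqn:W_def} of $\W$, gives $\opl(v,w)=\langle\jumpN{v},\traceS\conj{w}\rangle_{\Sigma}$ for all $w\in\V$. Surjectivity of $\traceS$ together with $\opL\opSL\genvarBd=\traceS'\genvarBd$ then yields the jump relation.

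Your regularity step, the cutoff localisation, and the explicit deduction $\opL v=\traceS'\jumpN{v}$ for the representation formula only spell out details the paper leaves implicit. One small correction: the Neumann condition on $\GammaN\setminus\Sigma$ has to be derived in both cases (a) and (b), not only in case (b).
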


\begin{proof}
	Recall $\operatorname{dist}(\partial\Omega\setminus\Sigma,\Sigma)>0$ by assumption on $\Sigma$.
	Green's identity on $\Omega_{\pm}$ and the characterisation~\eqref{eqn:W_def} of $\W$
	reveal for any $\genvarBd\in\Bd$ with $v\coloneqq\opSL\genvarBd\in \W$ and $w\in\V$ that
	\begin{align*}
		\left\langle \jumpN{v}, \traceS\conj w\right\rangle_{\Sigma}
		&=\int_{\Omega\setminus\Sigma}\opA\nabla v\cdot \nabla\conj w 
		+ \left(\mathbf{b}\cdot \nabla \genvarV + c \genvarV\right)\,\conj{\genvarValt} \d x
		-\left\langle G{v}, \traceD[\partial\Omega]\conj w\right\rangle_{\partial\Omega} = \opl(v,w).
	\end{align*}
	Hence the claim follows from the surjectivity and definition of $\opSL=\opN\traceS'$ that implies
	\begin{align*}
		\left\langle \jumpN{{\opSL \genvarBd}}, \traceS\conj w\right\rangle_{\Sigma}
		= \opl({\opSL \genvarBd},w) = \left\langle \opL {\opSL \genvarBd},\conj w\right\rangle _{\V'\times V}
		= \left\langle g,\traceD[\Sigma]\conj w\right\rangle_{\Sigma}.
	\tag*{\qedhere}
	\end{align*}
\end{proof}

This model second-order problem setting fits into the abstract framework of
\Cref{sec:BEM_for_the_single-layer_potential}.\linebreak
\textbf{\Cref{thm:A}} follows for any discretisation pair
$\Vh\subset\V$ and $\Bdh\subset\Bd$ from \Cref{thm:quasi_best}
under the explicit compatibility condition 
$\varepsilon_{\mathrm{con}}\leq\varepsilon_{\opL}\coloneqq\const{\opL}/(2\|\opL\|^2)$.
In order to interpret the compatibility condition as a condition on the relative resolution of the 
volume and boundary discretisations by \Cref{lem:consistency}, a discussion on the elliptic regularity is of order.

\subsection{Mapping properties and elliptic regularity}%
\label{sub:Elliptic regularity}

The natural scale $0\leq s$ of Sobolev spaces associated to the operators 
$\opSL$ and $\opV$ reads
\begin{align*}
	\Bd^s\coloneqq H^{-1/2+s}(\Sigma),
	\qquad
	\B^s\coloneqq H^{1/2+s}(\Sigma),
	\qquad\text{and}\qquad
	\W^s\coloneqq H^{1+s}(\Omega\setminus\Sigma)\cap \W.
\end{align*}
Here, the possible range for $s$ is restricted by the smoothness of the interface 
(e.g., $s\leq 1$ for merely Lipschitz $\Sigma$), and 
$\sigma_{\mathrm{reg}}\geq0$ indicates the range $0\leq s\leq\sigma_{\mathrm{reg}}$
for which %
\begin{align*}
	\opSL:\Bd^s\to \W^s
	\qquad\text{and}\qquad
	\opV:\Bd^s\to \B^s
\end{align*}
are bijections as depicted in \Cref{fig:abstrace_diagram} 
and allow a shift theorem (\Cref{thm:regularity of solutions}).
In other words, $\sigma_{\mathrm{reg}}\geq0$ relates to an index of elliptic regularity
for~\eqref{eqn:MP} and thus depends on the regularity of the coefficients 
and also on the smoothness of $\partial\Omega$ and $\Sigma$.
The following theorem characterises $\sigma_{\mathrm{reg}}\geq0$ in a  
model setting with $G\equiv 0$ and 
\Cref{rem:mapping_generalise,rem:higher_regularity} below discuss extensions.
Recall the assumptions on the PDE~\eqref{eqn:2nd_PDE} from \Cref{sub:Model problem}.

\begin{theorem}\label{thm:mapping_property}
	Let $\partial\Omega$ and $\Sigma$ be of class $C^{k,1}$ with $k\in\mathbb{N}_0$
	and $G\equiv 0$ in~\eqref{eqn:2nd_PDE}.
	Suppose $\opA\in C^{k,1}(\Omega;\mathbb{H}^n)$ and
	$\mathbf{b}\in C^{k,1}(\Omega;\mathbb{C}^n)$ and, provided $k\geq1$,
	$c\in C^{k-1,1}(\Omega)$. 
	Then 
	\begin{enumerate}[label=(\roman*)]
		\item if $k=0$,
		then $\sigma_{\mathrm{reg}}\coloneqq \kappa$ is admissible for any $\kappa<1/2$ and
		\item if $k\geq1$,
			then $\sigma_{\mathrm{reg}}\coloneqq k$ is admissible.
	\end{enumerate}
\end{theorem}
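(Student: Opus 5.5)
The plan is to identify both mapping statements with regularity results for elliptic boundary value and transmission problems on the pieces $\Omega_\pm$. Since $\Omega\setminus\Sigma=\Omega_+\cup\Omega_-$, we have $H^{1+s}(\Omega\setminus\Sigma)=H^{1+s}(\Omega_+)\times H^{1+s}(\Omega_-)$. The proof is then organised as three claims:
\begin{enumerate}[label=(\arabic*)]
	\item $\opSL:\Bd^s\to\W^s$ is bounded;
	\item the Neumann jump $\jumpN\bullet:\W^s\to\Bd^s$ is bounded;
	\item $\traceS:\W^s\to\B^s$ is an isomorphism.
\end{enumerate}
Given (1)--(2), \Cref{lem:jump_relation} shows that $\jumpN\bullet$ is a bounded two-sided inverse of $\opSL$ on the $s$-scale. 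Then (3) gives the isomorphism $\opV=\traceS\opSL:\Bd^s\to\B^s$ as a composition of isomorphisms. The constant $\Const{reg}(s)$ of \Cref{thm:regularity of solutions} collects the norms of these maps.

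\emph{Step (1): the transmission problem.} For $\genvarBd\in H^{-1/2+s}(\Sigma)$, the function $v=\opSL\genvarBd$ is the weak solution of the transmission problem
\begin{align*}
	\opL v=0\ \text{in }\Omega_\pm,\qquad \traceS v^+=\traceS v^-,\qquad \jumpN v=\genvarBd \text{ on }\Sigma,
\end{align*}
together with the boundary condition $\traceN[\Omega]v=0$ on $\GammaN\setminus\Sigma$ and $v=0$ on $\GammaD$. This characterisation uses \Cref{lem:jump_relation} and $G\equiv0$. Uniqueness and the $H^1$ bound come from coercivity~\eqref{eqn:L_coercive}. To lift to $H^{1+s}$ I will localise with a partition of unity, which splits the problem into three kinds of pieces:
\begin{itemize}
	\item pieces near $\Sigma$, which are local transmission problems across a $C^{k,1}$ interface;
	\item pieces near $\partial\Omega$, which are local Dirichlet or Neumann problems;
	\item interior pieces.
\end{itemize}
The condition $\operatorname{dist}(\partial\Omega\setminus\Sigma,\Sigma)>0$ keeps the interface pieces separated from $\partial\Omega$. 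The lower-order terms $\mathbf b\cdot\nabla v+cv$ and the commutators with the cut-offs are moved to the right-hand side; they lie in $H^{s-1}$ once $v\in H^{s}$, so a finite bootstrap applies.

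For each local problem I will invoke the following known results:
\begin{itemize}
	\item $k=0$, $s=\kappa<1/2$, Lipschitz coefficients and Lipschitz interfaces: the $H^{3/2-\epsilon}$ theory of Savar\'e~\cite{Sav:RegularityResultsElliptic1998}, applied to Dirichlet, Neumann and transmission problems;
	\item $k\geq1$: the classical shift theorem of McLean~\cite[Thms.~4.18, 4.20]{McL:StronglyEllipticSystems2000} and Agranovich~\cite{Agr:SpectralProblemsSecondorder2002}. A $C^{k,1}$ boundary with $C^{k,1}$ leading coefficients and $C^{k-1,1}$ lower-order coefficients gives $H^{1+k}$ regularity from $H^{-1/2+k}$ Neumann data.
\end{itemize}

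\emph{Step (2): the Neumann jump on $\W^s$.} Let $w\in\W^s$. Then $\Div(\opA\nabla w)=\mathbf b\cdot\nabla w+cw\in L^2(\Omega_\pm)$ (in fact in $H^{s}$), so the co-normal traces are defined on each side.
\begin{itemize}
	\item For $s\geq1$ (case $k\geq1$), $\traceN[\Omega_\pm]w$ is the classical trace $\nu\cdot\opA\nabla w\in H^{s-1/2}(\Sigma)$. This needs $\opA\nabla w\in H^{s}$, which follows from $\opA\in C^{k,1}$.
	\item For $s<1/2$, I will use the estimate $\|\traceN w\|_{H^{-1/2+s}}\lesssim\|w\|_{H^{1+s}}+\|\Div(\opA\nabla w)\|_{L^2}$. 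This is obtained by duality: in Green's identity, test against $H^{1-s}$-extensions of $H^{1/2-s}(\Sigma)$ functions.
\end{itemize}

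\emph{Step (3): the Dirichlet problems.} Boundedness of $\traceS:\W^s\to H^{1/2+s}(\Sigma)$ is the trace theorem, valid for $s<1/2$ on Lipschitz $\Sigma$ and for $s\le k$ on $C^{k,1}$ surfaces. For the inverse, given $\datavarB\in H^{1/2+s}(\Sigma)$ I solve separately:
\begin{itemize}
	\item on $\Omega_-$, a Dirichlet problem with data $\datavarB$;
	\item on $\Omega_+$, a problem with Dirichlet data $\datavarB$ on $\Sigma$ and the original conditions on $\partial\Omega$.
\end{itemize}
Both are well posed by coercivity. The same localisation and regularity references as in Step (1) yield a solution $u\in H^{1+s}(\Omega_\pm)$ with $u\in\W^s$ and a bounded solution map.

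\emph{Main obstacle.} I expect the main difficulty to be the global $H^{1+s}$ regularity near $\partial\Omega$. The parts of $\partial\Omega$ where $\GammaD$ and $\GammaN$ meet generate singularities typical of mixed problems. I would handle this in one of two ways:
\begin{itemize}
	\item read the hypothesis ``$\partial\Omega$ of class $C^{k,1}$'' as implying that $\GammaD$ and $\GammaN$ are unions of components, so the junction does not occur; or
	\item note that only regularity near $\Sigma$ is actually needed for \Cref{lem:consistency}, and otherwise restrict $s$ accordingly.
\end{itemize}
A second delicate point is the Lipschitz case $k=0$. Here Savar\'e's result must be applied to the transmission problem with Lipschitz, rather than smooth, coefficients, and I must check that the lower-order perturbation $\mathbf b\cdot\nabla v$ costs no regularity below $H^{1+\kappa}$; this holds because it maps $H^1$ into $L^2$.
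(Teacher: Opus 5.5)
\paragraph{Comparison with the paper's proof.}
Your skeleton matches the paper's: boundedness of $\opSL$ on the shifted scale, the Neumann jump as its bounded inverse via \Cref{lem:jump_relation}, and surjectivity of $\opV$ from regularity of the problems on $\Omega_\pm$ with Dirichlet data on $\Sigma$. For $k\geq1$ you also invoke the same shift theorems.

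The routes differ for $k=0$. You localise and treat the pieces near $\Sigma$ as transmission problems, which is workable but unnecessary. The paper observes that for $\kappa<1/2$ the trace $H^{1-\kappa}(\Omega)\to H^{1/2-\kappa}(\Sigma)$ is bounded. Hence $\traceS'$ maps $H^{-1/2+\kappa}(\Sigma)$ into $H^{-1+\kappa}(\Omega)\cap\V'$. The single global shift $\opL:H^{1+\kappa}(\Omega)\cap\V\to H^{-1+\kappa}(\Omega)\cap\V'$ then gives
\[
\opSL g=\opN\traceS' g\in H^{1+\kappa}(\Omega)=H^{1+\kappa}(\Omega\setminus\Sigma)\cap H^1(\Omega)
\]
with no interface analysis at all. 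The same shift gives your Step (2) in one line, through $\langle [v]_{\mathrm{N}},\traceS\conj w\rangle_{\Sigma}=\langle\opL v,\conj w\rangle_{\V'\times\V}$.

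The paper then obtains the intermediate values of $s$ by interpolating the endpoint cases. You need that step as well: your Step (1) for $k\geq1$ only treats $s=k$, and your Step (2) says nothing for $1/2\leq s<1$.

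\paragraph{Your ``main obstacle''.}
This is a real defect of the statement, and the paper's proof passes over it by citing shift theorems for pure Dirichlet or pure Neumann conditions.
\begin{itemize}
\item Where $\GammaD$ and $\GammaN$ meet, solutions in general carry an $r^{1/2}$-type singularity even on a smooth boundary. This excludes $H^{3/2}$ regularity and hence (ii).
\item At a re-entrant Lipschitz corner of angle $\omega>\pi$, the leading exponent $\pi/(2\omega)$ is below $1/2$. This also excludes (i) for $\kappa$ close to $1/2$.
\end{itemize}
So your first remedy is the hypothesis the theorem actually needs: take $\GammaD$ to be a union of connected components of $\partial\Omega$, so that no junction occurs. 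Your second remedy (regularity only near $\Sigma$) would prove a different statement, because $\W^s$ demands global $H^{1+s}(\Omega\setminus\Sigma)$ regularity.
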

\begin{proof}
	This proof derives the result for $s=\sigma_{\mathrm{reg}}$ 
	from classical elliptic regularity theory%
	~\cite{Gri:EllipticProblemsNonsmooth1985,Sav:RegularityResultsElliptic1998,McL:StronglyEllipticSystems2000,Agr:SpectralProblemsSecondorder2002}
	and the full range $0\leq s\leq \sigma_{\mathrm{reg}}$ follows by interpolation.

	\medskip
	\noindent\emph{Step 1} verifies the boundedness of 
	\begin{align}\label{eqn:bdd_injection}
		\opSL:H^{-1/2+\sigma_{\mathrm{reg}}}(\Sigma)\to H^{1+\sigma_{\mathrm{reg}}}(\Omega\setminus\Sigma)\cap \W,
		\qquad
		\opV:H^{-1/2+\sigma_{\mathrm{reg}}}(\Sigma)\to H^{1/2+\sigma_{\mathrm{reg}}}(\Sigma).
	\end{align}

	In the case $k=0$ with $\sigma_{\mathrm{reg}}=\kappa<1/2$,
	the trace 
	$\traceS:H^{1\pm\kappa}_{\Gamma_{\mathrm{D}}}(\Omega)\to H^{1/2\pm\kappa}(\Sigma)$
	is bounded on Lipschitz sets and so is the dual trace
	$\traceS':H^{-1/2+\kappa}(\Sigma)\to H^{-1+\kappa}(\Omega)\cap \V'$.
	Moreover $H^{1+\kappa}(\Omega\setminus\Sigma)\cap H^{1}(\Omega)=H^{1+\kappa}(\Omega)$.
	Since elliptic regularity asserts that
	\begin{align}\label{eqn:add_reg_L}
		\opL:H^{1+\kappa}(\Omega)\cap \V\to H^{-1+\kappa}(\Omega)\cap \V'
	\end{align}
	is an isomorphism, the identities $\opSL=\opN\traceS'$ and $\opV=\traceS \opSL$
	verify~\eqref{eqn:bdd_injection} for $\sigma_{\mathrm{reg}}=\kappa$.

	For $\sigma_{\mathrm{reg}}=k\geq1$ and any $\genvarBd\in H^{-1/2+k}(\Sigma)$, 
	$v\coloneqq \opSL\genvarBd\in \W$
	is $\opL$-harmonic in $\Omega\setminus\Sigma$ and satisfies
	$\jumpN v=\genvarBd$ by \Cref{lem:jump_relation}.
	Hence~\cite[Thm.~4.18 and 4.20]{McL:StronglyEllipticSystems2000}
	deduces $v=\opSL\genvarBd\in H^{1+k}(\Omega\setminus\Sigma)$.
	This and the regularity of the trace on $C^{k,1}$ surfaces%
	~\cite[Thm.~3.37]{McL:StronglyEllipticSystems2000} prove~\eqref{eqn:bdd_injection}
	for $\sigma_{\mathrm{reg}}=k$.

	\medskip
	\noindent\emph{Step 2} discusses the surjectivity on the shifted scale.
	\Cref{lem:jump_relation} identifies
	the inverse of $\opSL$ as the normal jump
	that extends to a bounded linear map 
	$\jumpN\bullet:H^{1+\sigma_{\mathrm{reg}}}(\Omega\setminus\Sigma)\cap \W\to
	H^{-1/2+\sigma_{\mathrm{reg}}}(\Sigma)$ 
	by the additional regularity of the coefficients and interface.
	This is straightforward for $\sigma_{\mathrm{reg}}=k\geq1$ and follows from the characterisation
	\begin{align*}
		\left\langle \jumpN v,\traceS\conj w\right\rangle_{\Sigma}
		=\opl(v,w) = \left\langle \opL v, \conj w\right\rangle_{\V'\times \V}
		\qquad\text{for all }v\in \W, w\in \V
	\end{align*}
	and the additional regularity~\eqref{eqn:add_reg_L} of $\opL$ if 
	$\sigma_{\mathrm{reg}}=\kappa<1/2$.
	Hence $\opSL$ from~\eqref{eqn:bdd_injection} is surjective.

	To establish the surjectivity of $\opV$, we argue that~\eqref{eqn:AMP}
	satisfies a regularity shift as follows.
	Consider any $\datavarB\in H^{1/2+\sigma_{\mathrm{reg}}}(\Gamma)$ and
	the unique solution $u\in \W$ to~\eqref{eqn:AMP}, which is~\eqref{eqn:MP} 
	for $G\equiv 0$ in the current setting.
	By assumption, $\Omega=\Omega_+\cup \Sigma\cup \Omega_-$ splits
	into disjoint Lipschitz sets $\Omega_\pm\subset \Omega$ 
	(possibly $\Omega_-=\emptyset$).
	The restrictions
	$u_\pm\coloneqq u|_{\Omega_\pm}
	\in H^{1}_{\GammaD\cap \partial\Omega_\pm}(\Omega_\pm)$
	of $u\in \W$ are uniquely given~by
	\begin{subequations}\label{eqn:MP_pm}
		\begin{align}\label{eqn:MP_pma}
			-\Div(\opA\nabla u_\pm)  + \mathbf{b}\cdot \nabla u_\pm + c u_\pm
			&=0
			&&\text{in }\Omega_\pm\setminus\Sigma,\\\label{eqn:MP_pmb}
			\traceN[\Omega_\pm]u_\pm
			&= 0
			&&\text{on }\GammaN\cap\partial\Omega_\pm\setminus\Sigma,\\
			\traceS u_\pm 
			&=\datavarB
			&&\text{on }\Sigma.\label{eqn:MP_pmc}
		\end{align}
	\end{subequations}
	Elliptic regularity 
	asserts $u_\pm\in H^{1+\sigma_{\mathrm{reg}}}(\Omega_\pm)$ and implies
	$u\in H^{1+\sigma_{\mathrm{reg}}}(\Omega\setminus\Sigma)\cap \W$.
	This and \Cref{fig:abstrace_diagram} reveal the surjectivity of $\opV$ 
	from~\eqref{eqn:bdd_injection} and conclude the proof.
\end{proof}

\begin{remark}[generalisations of \Cref{thm:mapping_property}]\label{rem:mapping_generalise}
	The smoothness assumptions on the coefficients in \Cref{thm:mapping_property}.i may be relaxed,
	e.g., to be multipliers on $H^t$ for some $0<t<1/2$.
	Then $\opL$ from~\eqref{eqn:add_reg_L}
	is bounded for all $0\leq \kappa\leq t$
	and an isomorphism at least for all $0\leq \kappa\leq \varepsilon $ 
	with some small $\varepsilon<t$%
	~\cite{Joc:$H^s$regularityResultGradient1999,HMW:HigherRegularitySolutions2019}.
	(The value of $\varepsilon$ can be quantified in terms of the operator norms of $\opL$ at $s\in\{0,t\}$
	and $\opN=\opL^{-1}$ at $s=0$.)
	Using this and the analogous regularity shift for~\eqref{eqn:MP_pm}, one obtains
	$\sigma_{\mathrm{reg}}=\varepsilon$ for some small $0<\varepsilon$.
	
	The extension of \Cref{thm:mapping_property} to more general boundary conditions ($G\ne0$) 
	is straightforward if the corresponding elliptic regularity shifts used in the proof holds.
	Examples include Robin-type boundary conditions $G(u)= r u$ with sufficiently smooth
	$r: \Sigma\to \mathbb{C}$, where%
	~\eqref{eqn:2nd_PDEb} and~\eqref{eqn:MPb}
	may be interpreted as inhomogeneous Neumann boundary conditions 
	with smooth Neumann data $G(u)\in H^{1/2+s}(\Sigma)$ (if $u\in H^{1+s}$)
	and standard elliptic regularity shifts%
	~\cite[Thm.~2.9]{Agr:SpectralProblemsSecondorder2002}.
	Another class of important boundary conditions
	are transparent (also called Dirichlet-to-Neumann) conditions%
	~\cite{Fen:FiniteElementMethod1983,KG:ExactNonreflectingBoundary1989,GS:DirichlettoNeumannOperatorHelmholtz2025,GHS:StableSkeletonIntegral2025}
	that enforce a compatibility condition of solutions with some (generalised) harmonic extension
	across $\GammaN$. 
	In other words, solutions are restrictions of those to a similar problem posed on a larger domain 
	from which the regularity properties are inherited.
\end{remark}
\begin{remark}[higher-regularity for Lipschitz domains]\label{rem:higher_regularity}
	If the coefficients are sufficiently smooth, $0<\sigma_{\mathrm{reg}}$
	depends exclusively on the regularity of $\partial\Omega$ and $\Sigma$.
	For piecewise smooth boundaries as in the case of polyhedral domains 
	$\Omega,\Omega_\pm\subset\R^n$, elliptic regularity 
	depends on the minimum of the interior angles%
	~\cite{Gri:EllipticProblemsNonsmooth1985,Gri:SingularitiesBoundaryValue1992}
	of $\Omega,\Omega_\pm$ and leads to some $1/2<\sigma_{\mathrm{reg}}$,
	where $H^{1/2+s}(\Sigma)$ for $s>1/2$ is understood as the trace of 
	$H^{1+s}(\Omega)$.
	If $\Omega=\Omega_+$ is convex (so that $\Omega_-=\emptyset$), one can choose
	$\sigma_{\mathrm{reg}}=1$. 
\end{remark}

\subsection{Relation to Green's function}%
\label{sub:Relation_to_classical_BEM}
There exists a vast literature on
the existence of a Green's function for~\eqref{eqn:2nd_PDE}
at least for pure Dirichlet boundary conditions ($\GammaD=\partial\Omega$)
and further assumptions on $\Omega$ and the coefficients $\mathbf{b}, c$, see, e.g.,%
~\cite{LSW:RegularPointsElliptic1963,GW:GreenFunctionUniformly1982,DM:EstimatesGreensMatrices1995,Beb:EfficientInversionGalerkin2004,HK:GreenFunctionEstimates2007,SS:BoundaryElementMethods2011,KS:GreensFunctionSecond2019,MP:ExponentialDecayEstimates2019}.

A Green's function for~\eqref{eqn:2nd_PDE} is an integral kernel
${\mathcal G}:\Omega\times\Omega\to \C\cup\{+\infty\}$ for the solution operator $\opN$ 
that incorporates the boundary conditions and satisfies%
\begin{align*}%
	\opN f
	= \int_{\Omega}{\mathcal G}(\bullet,y) f(y)\d y
	\qquad\text{and}\qquad
	\opN' f = \int_{\Omega}{\conj{{\mathcal G}(x,\bullet)}}f(x)\d x
	\qquad\text{for all }f\in C^\infty_0(\Omega),
\end{align*}
where the formula for the $L^2(\Omega)$-dual operator $\opN'$ follows from
Fubini's theorem. %
From the definition of $\opSL=\opN\traceS'$ in~\eqref{eqn:SL_def}, one infers
for any $\genvarBd\in L^1(\Sigma)$ that
\begin{align*}
	\left\langle \opSL \genvarBd, \conj{f}\right\rangle_{\V\times \V'}
	=\left\langle \genvarBd, \traceD \conj{\opN' f}\right\rangle_{\Bd\times B}
	=\int_{\Sigma}\int_{\Omega} {\mathcal G}(x,y)\genvarBd(y)\conj{f(x)}\d x \d y
	\qquad\text{for all }f\in C^\infty_0(\Omega).
\end{align*}
By identification of distributions, this provides the classical integral representation
of the single-layer potential (cf.~\cite[Sec.~3.1]{SS:BoundaryElementMethods2011})
\begin{align*}%
	\opSL\genvarBd = \int_{\Sigma}{\mathcal G}(\bullet,y) \genvarBd(y)\d y
	\qquad\text{for all }\genvarBd\in L^1(\Sigma)
\end{align*}
and, by restriction to $\Sigma$, the single-layer boundary integral operator $\opV=\traceS\opSL$.

If the Green's function ${\mathcal G}$ associated to $\opL$ is available explicitly,
then the identity
\begin{align}\label{eqn:cDSP_kernel}
	\left\langle \opV\genvarBdh,\conj{\genvarBdhalt}\right\rangle_{\B\times \Bd}
	=\int_{\Sigma}\int_{\Sigma}{\mathcal G}(x,y)\genvarBdh(y)\conj{\genvarBdhalt(x)}\d y\d x
	\qquad\text{for all }\genvarBdh,\genvarBdhalt\in \Bdh
\end{align}
makes the classical Galerkin BEM~\eqref{eqn:cDSP} feasible,
provided the double integral can be evaluated.
In practice, this requires a suitable numerical quadrature
adapt to the kernel singularity.
However, explicit representations for ${\mathcal G}$ are restricted to special cases and simple geometries.
The most prominent examples are \emph{constant} coefficients in the full space $\Omega=\R^n$
(where ${\mathcal G}$ is the fundamental solution)~\cite[Subsec.~3.1]{SS:BoundaryElementMethods2011}
and in the half space
$\Omega=\R^n_+$~\cite{CH:UniformlyValidFar1995,DHN:ComputingNumericallyGreens2007,LMS:ExplicitFactorizationGreens2025}.
Green's functions/fundamental solutions are also known for other specific geometries such as layered media,
wave guides in periodic media, and structured composites%
~\cite{CC:ParallelFastAlgorithm2012,LLQ:PerfectlyMatchedLayer2018,BY:WindowedGreenFunction2021,BBFT:HalfSpaceMatchingMethod2023,LYZZ:UniformFarFieldAsymptotics2024}.

For general Lipschitz domains (including boundary conditions) and, in particular, 
for operators with variable coefficients, an explicit Green’s function is typically unavailable. 
In these situations, the kernel-based Galerkin BEM \eqref{eqn:cDSP} with~\eqref{eqn:cDSP_kernel}
cannot be implemented. The kernel-free BEM formulation \eqref{eqn:DSP} removes this restriction.

\section{Finite and boundary element discretisation}%
\label{sub:FEM-based discretiation}
This section discusses convergence rates and complexity estimates of the kernel-free BEM~\eqref{eqn:DSP}
for standard discretisations by (mapped) finite and boundary elements.
\subsection{Parametric partitions of $\Omega$ and $\Sigma$}%
\label{sub:Parametric partitions}

Let $\T$ partition the Lipschitz domain $\Omega\subset\R^n$ into elements 
$T\in\T$ 
with non-overlapping interiors and $\overline\Omega=\cup\mathcal{T}$.
Each $T\in\mathcal{T}$ is the image of the (closed) $n$-dimensional 
unit simplex or unit cube $\widehat T$
under a bi-Lipschitz homeomorphism $\Phi_T:\widehat T\to T$.
The set of faces $\mathcal{F}=\mathcal{F}(\partial\Omega)\cup \mathcal{F}(\Omega)$
(images of reference faces under the element maps) is decomposed into boundary faces 
$\mathcal{F}(\partial\Omega)\coloneqq\{F\in\mathcal{F}\ :\ F\subset\partial\Omega\}$ and
interior faces $\mathcal{F}(\Omega)=\mathcal{F}\setminus\mathcal{F}(\partial\Omega)$.

We assume that $\mathcal{T}$ is shape-regular 
in the following sense,
see \Cref{rem:curved_partitions} below for connections to alternative definitions.
Denote the diameter of $T\in\T$ by $\diamT_T\coloneqq \mathrm{diam}(T)$.%
\begin{defn}[shape regularity]\label{def:shape_regular}
	The partition $\mathcal{T}$ is called 
	\begin{enumerate}[label=(\roman*)]
		\item \emph{regular} %
	if any $F\in\mathcal{F}(\Omega)$ is the face $F=T_1\cap T_2$
	of exactly two $T_1\ne T_2\in\T$ and
	\begin{align}\label{eqn:compatibility}
		\Phi_{T_2}^{-1}\circ \Phi_{T_1}|_{\widehat F_1}: \widehat F_1\to \widehat F_2
	\end{align}
	is an affine bijection between the faces 
	$\widehat F_1=\Phi_{T_1}^{-1}(F)$ and $\widehat F_2=\Phi_{T_2}^{-1}(F)$ 
	of $\widehat T$,
\item
	\emph{shape regular of order 
	$\mu\in\mathbb N_0$} if it is regular and
	any $T\in\mathcal{T}$ satisfies $\Phi_T\in C^{\mu}(\widehat T;T)$ and there exists $\Const{sr}>0$ 
	such that, for all $j=1,\dots,\max\{1,\mu\}$,
	\begin{align*}
		\|D\Phi_T^{-1}\|_{L^\infty(T)}\leq \Const{sr} \diamT_T^{-1}
		\quad\text{and}\quad
		\|D^j\Phi_T\|_{L^\infty(\widehat T)}\leq \Const{sr} \diamT_T^{j}.
	\end{align*}%
	\end{enumerate}
\end{defn}
\noindent

	Similarly, $\mathcal{P}$ denotes a shape-regular partition (in the analogous sense 
	of \Cref{def:shape_regular}) of the interface $\Sigma$
	into panels $\tau$ of diameter $\diamP_{\tau}$, which are homeomorphic to the 
	$(n-1)$-dimensional unit simplex 
	or unit cube $\widehat \tau$ under the bi-Lipschitz mapping 
	$\Phi_\tau:\widehat \tau\to \tau$.
	Note that $\mathcal{T}$ may \emph{not} resolve $\Sigma$ (and hence $\mathcal{P}$),
	i.e., $\Sigma\not\subset\cup \mathcal{F}$ is allowed.
	However, we restrict the following discussion to quasi-uniform partitions:
	There exists $\Const{qu}>0$ with
	\begin{align*}
		\diamT\coloneqq\max_{T\in\mathcal{T}}\diamT_T\leq \Const{qu} \diamT_T
		\qquad\text{and}\qquad	
		\diamP\coloneqq\max_{\tau\in\mathcal{P}}\diamP_\tau\leq \Const{qu} \diamP_{\tau}
		\qquad\text{for all }T\in\mathcal{T},\tau\in\mathcal{P}.
	\end{align*}%

	Shape regularity implies the usual pull-back scaling for Sobolev norms on elements 
	and yields uniform overlap bounds between $\mathcal{T}$ and $\mathcal{P}$.
	Recall the spacial dimension $n\geq2$.%
	\begin{lemma}[overlap control]\label{lem:localisation}
		There exists a constant $\Const{ov}>0$ that exclusively depends on 
		$n $, $\Const{sr}$, and $\Const{qu}$
		such that any $\tau\in\mathcal{P}$ 
		and $\mathcal{T}(\tau)\coloneqq\{T\in\T \ :\ T\cap \tau\ne\emptyset\}$
		satisfy
		\begin{align*}
			\max_{\tau\in\mathcal{P}}|\mathcal{T}(\tau)|\leq \Const{ov}(1+\diamP/\diamT)^n.
		\end{align*}
	\end{lemma}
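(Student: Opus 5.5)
The plan is a volume-counting argument. Fix $\tau\in\mathcal{P}$ and enclose it in a ball. By shape regularity of $\mathcal{P}$, the bound $\|D\Phi_\tau\|_{L^\infty}\leq\Const{sr}\diamP_\tau$ gives $\operatorname{diam}(\tau)\le \diamP$. Hence $\tau\subset B(x_\tau,\diamP)$ for any point $x_\tau\in\tau$.

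Every $T\in\mathcal{T}(\tau)$ meets $\tau$ and has $\diamT_T\le\diamT$, so
\begin{align*}
	\bigcup_{T\in\mathcal{T}(\tau)} T\subset B\bigl(x_\tau,\diamP+\diamT\bigr).
\end{align*}
The elements have pairwise disjoint interiors. Summing their Lebesgue measures therefore gives
\begin{align*}
	\sum_{T\in\mathcal{T}(\tau)}|T|\le \omega_n(\diamP+\diamT)^n,
\end{align*}
where $\omega_n$ is the volume of the unit ball.

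The next step is a lower bound for $|T|$. Because $\Phi_T$ is bi-Lipschitz with $\|D\Phi_T^{-1}\|_{L^\infty(T)}\le\Const{sr}\diamT_T^{-1}$, we have $|\det D\Phi_T^{-1}|\le (\sqrt n\,\Const{sr}\diamT_T^{-1})^n$ pointwise. Changing variables gives $|\widehat T|=\int_T|\det D\Phi_T^{-1}|\d x$, and hence
\begin{align*}
	|T|\ge |\widehat T|\,(\sqrt n\,\Const{sr})^{-n}\diamT_T^n.
\end{align*}
Quasi-uniformity $\diamT\le\Const{qu}\diamT_T$ then yields $|T|\ge c\,\diamT^n$, with $c>0$ depending only on $n$, $\Const{sr}$, $\Const{qu}$. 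Here $|\widehat T|$ is either $1/n!$ or $1$.

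Combining the two bounds gives
\begin{align*}
	|\mathcal{T}(\tau)|\le \frac{\omega_n}{c}\Bigl(\frac{\diamP+\diamT}{\diamT}\Bigr)^n=\Const{ov}(1+\diamP/\diamT)^n.
\end{align*}
Taking the maximum over $\tau$ concludes the argument.

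The only delicate point is the volume lower bound for curved elements. For a merely Lipschitz $\Phi_T$, the change-of-variables formula still holds by Rademacher's theorem and the area formula, since $\Phi_T$ is injective. The determinant bound follows from Hadamard's inequality, $|\det M|\le\|M\|^n$ with the spectral norm. This is the step I would write out carefully, including the normalisation of the matrix norm in the definition of $\Const{sr}$. The bound $\operatorname{diam}\tau\le\diamP$ is just the definition $\diamP_\tau=\operatorname{diam}(\tau)$, so no estimate is needed there.
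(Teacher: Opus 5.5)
Your proof is correct and follows the paper's volume-counting argument: all $T\in\mathcal{T}(\tau)$ lie in a ball of radius $\diamP+\diamT$ around a point of $\tau$, and each such $T$ has volume $\gtrsim\diamT^n$. The only difference is how that volume lower bound is obtained: the paper uses an inscribed ball $B_{r_0\diamT}(b_T)\subset T$, derived from the Lipschitz bound on $\Phi_T^{-1}$, whereas you use $|\det D\Phi_T^{-1}|\le\|D\Phi_T^{-1}\|^n$ together with the area formula, and both routes are valid.
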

	\begin{proof}
		Let $B_r(x)$ denote the open $n$-dimensional 
		ball of radius $r>0$ around $x\in\R^n$.
		The shape-regularity provides
		$0<r_0$ (that exclusively depends on $n$, $\Const{sr}$, and $\Const{qu}$) with
	\begin{align*}
		B_{r_0\diamT}(b_T)\subset T\subset B_{\diamT}(b_T)
		\qquad\text{for all }T\in\mathcal{T},
	\end{align*}
	where $b_T\coloneqq \Phi_T(\widehat b)\subset T$ is the image of the barycenter 
	(centroid) $\widehat b$ of $\widehat T$. 
	Indeed, the first inclusion follows from quasi-uniformity and the Lipschitz continuity of 
	$\Phi_T^{-1}$ that enters
	\begin{align*}
		0<\min_{\widehat x\in\partial\widehat T}\|\widehat
		x-\widehat b\|
		\leq\|D\Phi^{-1}_T\|_{L^\infty(T)}
		\min_{\widehat x\in\partial\widehat T}\|\Phi_T(\widehat
		x)-\Phi_T(\widehat b)\|
		\leq \Const{sr}/\diamT_T\min_{x\in\partial T}\|x-b_T\|.
	\end{align*}
	Any $T\in\mathcal{T}$ that intersects $B_{r d}(x)$ for $r>0$ 
	and $x\in\overline \Omega$ satisfies $T\subset B_{(1+r)\diamT}(x)$.
	Hence the number of such elements is bounded by the quotient
	$|B_{(1+r)\diamT}(0)|/|B_{r_0\diamT}(0)|=(1+r)^nr_0^{-n}$.
	This for $r=\diamP/\diamT$ and $\tau\subset B_{h}(x)$ for any 
	$x\in\tau\in\mathcal{P}$ 
	conclude the proof with $\Const{ov}\coloneqq r_0^{-n}$.
	\end{proof}

\subsection{Convergence rates of $h$-FEM/BEM}%
\label{sub:Convergence rates in }
	Let $P_m(\widehat K)$ denote polynomials up to 
	degree $m\in\mathbb{N}_0$ on the reference element
	$\widehat K\in\{\widehat T,\widehat\tau\}$
	(total degree on simplices and maximal degree on the cubes).
	Define the spaces
	\begin{align*}
		P_m(\mathcal{T})
		&\coloneqq
		\{ p \in L^\infty(\Omega) : p|_T \circ \Phi_T \in P_m(\widehat T)
	  \quad\text{for all }T\in \mathcal{T} \},\\
		P_m(\mathcal{P}) 
		&\coloneqq \{ p \in L^\infty(\Sigma) : p|_\tau \circ \Phi_\tau \in P_m(\widehat \tau)
			\quad\text{for all }\tau
		\in \mathcal{P} \}
	\end{align*}
	of piecewise mapped polynomials 
	(that depend implicitly on the mappings $\Phi_T$ and $\Phi_\tau$).
	Throughout the remaining parts of this paper, we assume that $\mathcal{T}$
	(resp.~$\mathcal{P}$) is shape regular 
	of order $p\in\mathbb{N}$ (resp.~$k\in\mathbb{N}_0$) and 
	that $\T$ resolves $\Gamma_{\mathrm{D}}$ 
	in the sense that
	\begin{align*}
	\Gamma_{\mathrm{D}}=\cup\mathcal{F}(\Gamma_{\mathrm{D}})
	\qquad\text{for }\mathcal{F}(\Gamma_{\mathrm{D}})\coloneqq\{F\in\mathcal{F}\ :\ 
	F\subset\Gamma_{\mathrm{D}}\}.
	\end{align*}
	The volume and interface partitions $\mathcal{T}$ and $\mathcal{P}$
	are then associated with the discrete spaces
	\begin{align*}
		\Vh\coloneqq S^p_{\mathrm{D}}(\mathcal{T})
		\coloneqq P_p(\mathcal{T})\cap H^1_{\Gamma_{\mathrm{D}}}(\Omega)\subset \V
		\qquad\text{and}\qquad
		\Bdh\coloneqq P_k(\mathcal{P})\subset \Bd.
	\end{align*}
	\Cref{lem:consistency} guarantees the well-posedness of the 
	BEM formulation~\eqref{eqn:DSP_intro} for these spaces under a condition on
	their approximation and Bernstein-type 
	properties~\eqref{eqn:Vh_approx}--\eqref{eqn:Bdh_inverse}.
	The involved constants
	relate to standard quasi-interpolation and inverse estimates
	recalled in the following two
	lemmata.
\begin{lemma}[{\cite{Bab:FiniteElementMethod1970,LMWZ:OptimalPrioriEstimates2010}}]
	\label{thm:V_quasi_interpolation}
	Let $m\in\{0,1\}$ and $m\leq s\leq p$.
	There exists a bounded linear operator 
	$Q:\V\to S^p_{\mathrm{D}}(\mathcal{T})$ with
	\begin{align*}
		\|v-Q v\|_{H^m(\Omega)}\leq\Const{Q} \diamT^{1+s-m}\|v\|_{H^{1+s}(\Omega)}
		\qquad\text{for all }v\in \V\cap H^{1+s}(\Omega).
	\end{align*}
	Moreover, if $\Sigma\subset\cup\mathcal{F}$ is resolved by 
	the faces $\mathcal{F}$ of $\mathcal{T}$, then
	\begin{align*}
		\|v-Q v\|_{H^m(\Omega)}\leq\Const{Q} \diamT^{1+s-m}
		\|v\|_{H^{1+s}(\Omega\setminus\Sigma)}
		\qquad\text{for all }v\in \V\cap H^{1+s}(\Omega\setminus\Sigma).
	\end{align*}
	The constant $\Const{Q}>0$ exclusively depends on $p$, $\Const{sr}$, and $\Const{qu}$.
\end{lemma}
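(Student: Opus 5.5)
We would prove this quasi-interpolation estimate by the standard route: a Scott--Zhang type operator $Q$ built from local $L^2$-dual functionals, together with the pull-back scaling that shape regularity of order $p$ provides, and then fractional orders by interpolation.

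\emph{Construction.} For each Lagrange-type degree of freedom $z$ of $S^p_{\mathrm{D}}(\mathcal{T})$ (defined through the reference element and the maps $\Phi_T$), choose an element $T_z$ or a face $F_z\in\mathcal{F}$ containing $z$. Boundary degrees of freedom on $\Gamma_{\mathrm{D}}$ are assigned to faces in $\mathcal{F}(\Gamma_{\mathrm{D}})$, which is possible because $\mathcal{T}$ resolves $\Gamma_{\mathrm{D}}$. Let $Qv$ be determined by its values $\int_{\sigma_z}\psi_z v$, where $\psi_z$ is the $L^2(\sigma_z)$-dual basis of the nodal basis on $\sigma_z$. Since traces on $\Gamma_{\mathrm{D}}$ vanish for $v\in\V$, the homogeneous condition is preserved and $Q$ maps $\V$ into $S^p_{\mathrm{D}}(\mathcal{T})$. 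The regularity condition~\eqref{eqn:compatibility} makes the affine face transitions consistent, so $Qv$ is continuous. Moreover, $Q$ reproduces every $w\in P_p(\mathcal{T})\cap\V$.

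\emph{Local estimates.} We would show $L^2$- and $H^1$-stability of $Q$ on each element patch $\omega_T$ by scaling through $\Phi_T$, using the bounds $\|D\Phi_T^{-1}\|\lesssim \diamT_T^{-1}$ and $\|D^j\Phi_T\|\lesssim \diamT_T^j$ of \Cref{def:shape_regular}, together with a scaled trace inequality for the face functionals. The key point is the Bramble--Hilbert step on curved elements. On $\widehat T$, subtract from $v\circ\Phi_T$ a polynomial approximation, then map back. The chain rule with $D^j\Phi_T = O(\diamT^j)$ for $j\le p$ gives $|v\circ\Phi_T|_{H^{1+s}(\widehat T)}\lesssim \diamT^{1+s-n/2}\|v\|_{H^{1+s}(T)}$ for integer $1+s\le p+1$; lower-order derivative terms are absorbed into the full norm. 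We expect this to be the main obstacle. It requires $\mu = p$ smoothness of the maps, and only $D^j\Phi_T$ for $j\le p$ may appear, which is why the full norm rather than the seminorm appears on the right-hand side. Summing over the patches with finite overlap, guaranteed by shape regularity and quasi-uniformity, gives the claim for integer $s\in\{m,\dots,p\}$. Non-integer $s$ then follows by operator interpolation between consecutive integers applied to $1-Q$, for $m$ fixed.

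\emph{Resolved interface.} If $\Sigma\subset\cup\mathcal{F}$, each element lies in $\overline{\Omega_+}$ or $\overline{\Omega_-}$. Choose $T_z$ so that every functional is supported inside a single element, or use a face functional whose chosen side is consistent. The local estimate on $T$ then only involves $\|v\|_{H^{1+s}(\omega_T\setminus\Sigma)}$: on each patch, we would compare $v$ with the piecewise polynomial obtained from local Bramble--Hilbert approximations on each side. Since $v\in H^1(\Omega)$ is continuous across $\Sigma$, the jump contributions of these local polynomials on patch faces are controlled through trace inequalities by the sidewise seminorms. Reproduction of piecewise polynomials that are continuous across $\Sigma$ then yields the second estimate, again extended to fractional $s$ by interpolation (for $s>1/2$, working on the broken spaces $H^{1+s}(\Omega_\pm)$). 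The dependence of $\Const{Q}$ only on $p$, $\Const{sr}$, and $\Const{qu}$ is inherited from the reference-element constants.
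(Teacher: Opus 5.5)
\textbf{Gap: the top-order pull-back bound.} Your argument fails at the chain-rule step for $1+s=p+1$. By Fa\`a di Bruno's formula, $D^{p+1}(v\circ\Phi_T)$ contains the term $(Dv\circ\Phi_T)\,D^{p+1}\Phi_T$. So your claim that only $D^j\Phi_T$ with $j\le p$ enter is false exactly at $s=p$, and the interpolated range $s\in(p-1,p)$ inherits the problem through that endpoint. Shape regularity of order $p$ in \Cref{def:shape_regular} gives neither the existence of $D^{p+1}\Phi_T$ nor a bound $\lesssim\diamT_T^{p+1}$.

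This is not just a weakness of the proof; the estimate itself can fail. Take $p=1$ and perturb a shape-regular affine mesh to $\Phi_T(\widehat x)=A_T\widehat x+a_T+\epsilon\,\diamT\,b(\widehat x)e_1$, where $b$ is an element bubble of $\widehat T$ and $\epsilon>0$ is small and fixed. The elements, the affine face transitions and the bounds $\|D\Phi_T\|\lesssim\diamT$, $\|D\Phi_T^{-1}\|\lesssim\diamT^{-1}$ are unchanged, so the mesh is still shape regular of order $1$. Now let $v=\chi x_1$ with a cut-off $\chi\in C^\infty_c(\Omega)$ equal to one on a ball $B$. On each $T\subset B$ one has $v\circ\Phi_T=\text{affine}+\epsilon\diamT\, b$. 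Hence each such element contributes $\gtrsim\epsilon^2\diamT^n$ to $\min_{v_\diamT\in P_1(\mathcal T)}\|\nabla(v-v_\diamT)\|_{L^2(\Omega)}^2$, and the $H^1$ error stays $\gtrsim\epsilon$ instead of being $O(\diamT)$.

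Because this bounds the best approximation from $S^p_{\mathrm D}(\mathcal T)$ from below, no choice of $Q$ avoids it. You need the extra hypothesis $\|D^{p+1}\Phi_T\|_{L^\infty(\widehat T)}\le\Const{sr}\diamT_T^{p+1}$. It holds automatically for isoparametric maps $\Phi_T\in P_p(\widehat T)^n$, where $D^{p+1}\Phi_T=0$. With it, your chain-rule bound and the rest of your argument go through. The paper does not construct $Q$; it applies \cite[Thm.~3.5]{LMWZ:OptimalPrioriEstimates2010} with $\delta=0$ and derives that paper's element-map assumption from \Cref{def:shape_regular} by a Bernardi-type scaling argument \cite{Ber:OptimalFiniteElementInterpolation1989}. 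That derivation needs the same order-$(p+1)$ control, so the strengthened hypothesis is required by the statement itself, not only by your route.

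\textbf{Remaining points.} Apart from this, your Scott--Zhang construction is a reasonable self-contained alternative to the citation. There is one place to tighten and one simplification available.
\begin{enumerate}
\item On mapped elements, element-wise Bramble--Hilbert plus patch stability does not close the argument by itself. The restriction $Qv|_T$ depends on $v$ on all of $\omega_T$, whereas $\pi_Tv\coloneqq(\widehat\pi(v\circ\Phi_T))\circ\Phi_T^{-1}$ lives only on $T$ and is not a discrete function on the patch. You already need the comparison of neighbouring local approximations on shared faces via scaled trace inequalities in the interface-free case; you invoke that mechanism only for $\Sigma$.
\item The resolved case is simpler than your jump argument. Give every node on $\Sigma$ a face functional on a face contained in $\Sigma$, and support all other functionals in $\overline{\Omega_+}$ or $\overline{\Omega_-}$. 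Then for $v\in H^1(\Omega)$, $(Qv)|_{\Omega_\pm}$ is the Scott--Zhang interpolant of $v|_{\Omega_\pm}$ on the submesh. The second estimate therefore follows from the first one applied on $\Omega_\pm$. Interpolation in $s$ then runs on the unconstrained spaces $H^{1+s}(\Omega_\pm)$, just as it does on $H^{1+s}(\Omega)$ in the first estimate, since your face functionals on $\GammaD$ define $Q$ on all of $H^1(\Omega)$.
\end{enumerate}
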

\begin{proof}
	A careful revisit of the arguments in~\cite{LMWZ:OptimalPrioriEstimates2010}
	reveals that Theorem 3.5 therein holds for any dimension $n\geq2$ and
	Lipschitz interfaces.
	Note that Assumption 2.3 therein, 
	which enforces the correct scaling of the element maps,
	is (for $n\geq 4$) only required 
	for any $v\in H^1(T)\cap C(T)$ and $T\in\mathcal{T}$.
	In the current setting, this is implied 
	by the shape-regularity in \Cref{def:shape_regular}.ii
	and a standard scaling argument (as in%
	~\cite[Lem.~2.3]{Ber:OptimalFiniteElementInterpolation1989}, \cite{Cia:FiniteElementMethod2002}).
	Hence the statement follows from~\cite[Thm.~3.5]{LMWZ:OptimalPrioriEstimates2010} 
	(with $\delta=0$ therein).
\end{proof}

\begin{lemma}[inverse inequality~{\cite{DFG+:InverseInequalitiesNonquasiuniform2004}}]
	\label{lem:inverse_inequality}
	There exists $\Const{inv}>0$ that exclusively depends on $k$, $\Const{sr}$, and 
	$\Const{qu}$ with
	\begin{align*}
		\|\genvarBdh\|_{H^{-1/2+s}(\Sigma)}
		\leq\Const{inv} \diamP^{-s}\|\genvarBdh\|_{H^{-1/2}(\Sigma)}
		\qquad\text{for all }\genvarBdh\in P_k(\mathcal{P}), 0\leq s\leq 1/2.
	\end{align*}
\end{lemma}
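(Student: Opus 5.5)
The plan is to reduce the estimate to the case $s=1/2$ and obtain the intermediate range by interpolation. The two endpoints are as follows: $s=0$ is trivial with constant $1$, and $s=1/2$ reads $\|\genvarBdh\|_{L^2(\Sigma)}\lesssim \diamP^{-1/2}\|\genvarBdh\|_{H^{-1/2}(\Sigma)}$. Since $H^{-1/2+s}(\Sigma)$ for $0\le s\le 1/2$ is the (complex) interpolation space between $H^{-1/2}(\Sigma)$ and $L^2(\Sigma)$, and $P_k(\mathcal{P})$ is finite dimensional, the interpolation inequality $\|\genvarBdh\|_{H^{-1/2+s}}\le C\|\genvarBdh\|_{H^{-1/2}}^{1-2s}\|\genvarBdh\|_{L^2}^{2s}$ then yields the claim with constant $C\,\Const{inv,1/2}^{2s}$. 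The constant $C$ depends only on the norm-equivalence constants of the interpolation scale on the Lipschitz surface $\Sigma$.

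For the endpoint $s=1/2$, I will use a duality argument combined with a local $L^2$ projection. Set $\Pi_\diamP$ to be the $L^2(\Sigma)$-orthogonal projection onto $P_k(\mathcal{P})$. This projection is local, elementwise on panels, because $P_k(\mathcal{P})$ is discontinuous. Then
\[
\|\genvarBdh\|_{L^2(\Sigma)}^2=\langle \genvarBdh,\conj{\genvarBdh}\rangle=\|\genvarBdh\|_{H^{-1/2}(\Sigma)}\,\|\genvarBdh\|_{H^{1/2}(\Sigma)}
\]
would require an inverse estimate in $H^{1/2}$, which fails for discontinuous functions. So instead I pass through a smoothed version. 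Choose a continuous quasi-interpolant $w_\diamP\in S^{k+1}(\mathcal{P})\cap C(\Sigma)$, or an averaged Clément-type function, with $\|\genvarBdh-w_\diamP\|_{L^2}\le \tfrac12\|\genvarBdh\|_{L^2}$ is not generally available. A cleaner route is therefore the elementwise bubble argument. On each panel $\tau$, let $b_\tau$ be the mapped panel bubble and set $w\coloneqq \sum_\tau b_\tau \genvarBdh|_\tau\in H^1_0$-piecewise and continuous. Norm equivalence on the reference panel, combined with shape regularity of order $k$ and quasi-uniformity, gives
\[
\|\genvarBdh\|_{L^2(\tau)}^2\lesssim\langle \genvarBdh, \conj w\rangle_{\tau}
\qquad\text{and}\qquad
\|w\|_{H^1(\tau)}\lesssim \diamP^{-1}\|\genvarBdh\|_{L^2(\tau)}.
\]
Since $w$ vanishes on panel boundaries, $w\in H^1(\Sigma)$ with $\|w\|_{H^1(\Sigma)}\lesssim\diamP^{-1}\|\genvarBdh\|_{L^2(\Sigma)}$ and $\|w\|_{L^2}\lesssim\|\genvarBdh\|_{L^2}$. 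Interpolating these two bounds gives $\|w\|_{H^{1/2}(\Sigma)}\lesssim\diamP^{-1/2}\|\genvarBdh\|_{L^2(\Sigma)}$, and hence
\[
\|\genvarBdh\|_{L^2(\Sigma)}^2\lesssim|\langle\genvarBdh,\conj w\rangle|\le\|\genvarBdh\|_{H^{-1/2}(\Sigma)}\|w\|_{H^{1/2}(\Sigma)}\lesssim \diamP^{-1/2}\|\genvarBdh\|_{H^{-1/2}(\Sigma)}\|\genvarBdh\|_{L^2(\Sigma)}.
\]
Dividing by $\|\genvarBdh\|_{L^2}$ gives the $s=1/2$ estimate.

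The main obstacle is the uniformity of constants on curved, mapped panels. The bubble estimates need the Jacobian of $\Phi_\tau$ to be comparable to $\diamP^{n-1}$, and the pulled-back functions to stay in a fixed finite-dimensional space. The latter is not automatic, since $\genvarBdh\circ\Phi_\tau\in P_k(\widehat\tau)$ but the surface measure introduces the non-polynomial weight $|\det|$-type Gram factor. I would handle this by noting that shape regularity bounds this weight above and below by $\diamP^{n-1}$ up to $\Const{sr}$, so the weighted reference norms remain uniformly equivalent to unweighted ones on $P_k(\widehat\tau)$. This is precisely the non-quasiuniform machinery of~\cite{DFG+:InverseInequalitiesNonquasiuniform2004}, which one may cite for the final details. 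Dependence on $\Sigma$ enters only through the interpolation constants on the Lipschitz surface.
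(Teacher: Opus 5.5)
Your proposal is correct and follows the same route as the paper: establish the endpoint $s=1/2$, i.e.\ $\|g_h\|_{L^2(\Sigma)}\lesssim h^{-1/2}\|g_h\|_{H^{-1/2}(\Sigma)}$, and interpolate with the trivial case $s=0$. The paper only cites a modification of \cite[Thm.~4.6]{DFG+:InverseInequalitiesNonquasiuniform2004} for the endpoint, whereas your panel-bubble duality argument (test with $w=\sum_\tau b_\tau g_h|_\tau$, bound the Gram factor via shape regularity, interpolate the $L^2$ and $H^1$ bounds on $w$) writes out that argument in the quasi-uniform setting; you should delete the abandoned attempts in your second paragraph, which include a false equality.
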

\begin{proof}
	The case $s=1/2$ is a straightforward modification of%
	~\cite[Thm.~4.6]{DFG+:InverseInequalitiesNonquasiuniform2004}.
	Interpolation with the trivial case $s=0$ concludes the proof; 
	further details are omitted.
\end{proof}
We hesitate to include here a precise discussion on the geometric assumptions that allow
higher-order approximation results also for 
\emph{nearly resolved} interfaces in \textbf{\Cref{thm:B}} and 
in \Cref{thm:V_quasi_interpolation}; 
for details we refer~\cite{LMWZ:OptimalPrioriEstimates2010}
and the following remark that
precedes the proof of \textbf{\Cref{thm:B}} from the introduction with positive $\sigma_{\mathrm{reg}}>0$.
\begin{remark}[$\delta$-resolved interface]\label{rem:delta_resolve_interface}
	The higher-order approximation in \Cref{thm:V_quasi_interpolation} 
	holds also for 
	interfaces $\Sigma\not\subset\cup\mathcal{F}$ that are perturbations of exactly
	resolved interfaces%
	~\cite[Thm.~3.5]{LMWZ:OptimalPrioriEstimates2010}.
	Intuitively, the interface $\Sigma$ is $\delta$-resolved by $\mathcal{T}$ if
	there exists $\Sigma_{\diamT}\subset\cup \mathcal{F}$ with distance 
	to $\Sigma$ bounded by some sufficiently small $\delta\ll \diamT$,
	which enters the approximation error bound.
	In this case, the convergence result in \textbf{\Cref{thm:B}}.ii becomes
	\begin{align*}
		\|\solvarBd-\solvarBdh\|_{H^{-1/2}(\Sigma)}
		\leq \Const{rate}(\diamT^{\min\{s, p+1\}} + \delta +  \diamP^{\min\{s,k+1\}})
			\|\datavarB\|_{H^{1/2+s}(\Sigma)}.
	\end{align*}
\end{remark}

\begin{proof}[Proof of~\textbf{\Cref{thm:B}}]
	Observe for any $0\leq s\leq s_*<1/2$ with $0<s_*\leq \sigma_{\mathrm{reg}}$
	that $\Bdh\subset \Bd^{s}=H^{-1/2+s}(\Sigma)$
	and $\W^s\subset H^{1+s}(\Omega\setminus\Sigma)\cap H^1(\Omega)=H^{1+s}(\Omega)$ 
	hold.
	\Cref{thm:V_quasi_interpolation,lem:inverse_inequality}
	provide~\eqref{eqn:Vh_approx}--\eqref{eqn:Bdh_inverse} with 
	$\eta_{s}(\Vh)\leq \Const{Q}\diamT^s$ and $\Const{inv}\diamP^s\leq \beta_s(\Bdh)$.
	Hence \Cref{thm:quasi_best} establishes the well-posedness and 
	the quasi-optimality~\eqref{eqn:Sh_quasi_best}
	of~\eqref{eqn:DSP_intro} provided that
	\begin{align}\label{eqn:cmp_def}
		\diamT^s\leq \Const{cmp}(s)\diamP^s
		\qquad\text{with}\quad\Const{cmp}(s)\coloneqq \Const{inv}\const{\opL}/(2\Const{Q}\Const{con}(s))
	\end{align}
	for some $s\in(0,s_*]$ and $\Const{con}(s)$ from \Cref{lem:consistency}.
	This holds in particular if
	$\diamT\leq \Const{cmp}\diamP$ for 
	$\Const{cmp}\coloneqq\inf_{s\in (0,s_*]}(\Const{cmp}(s))^{1/s}>0$,
	which is assumed hereafter.

	Given $\datavarB\in\B^s=H^{1/2+s}(\Sigma)$
	with $0\leq s\leq \sigma_{\mathrm{reg}}$,
	the continuous regularity shift
	of \Cref{thm:regularity of solutions} 
	and the standard approximation properties 
	(from a scaling-argument) of $\Bdh=P_k(\mathcal{P})$ 
	and \Cref{thm:V_quasi_interpolation} 
	(with $H^{1+s}(\Omega\setminus\Sigma)\cap H^1(\Omega)=H^{1+s}(\Omega)$ if $s<1/2$)
	reveal
	\begin{align*}
		\min_{\genvarBdh\in \Bdh}\|\solvarBd-\genvarBdh\|_{H^{-1/2}(\Sigma)}
		&\lesssim \diamP^{\min\{s,1+k\}}\|\datavarB\|_{H^{1/2+s}(\Sigma)},\\
		\min_{\genvarVh\in \Vh}\|\opSL\solvarBd-\genvarVh\|_{H^1(\Omega)}
		&\lesssim \left\{\begin{array}{ll}
				\diamT^s&\text{if }s<1/2,\\
				\diamT^{\min\{s,1+p\}}
					&\text{if }s>1/2\text{ and }\Sigma\subset\cup \mathcal{F}
		\end{array}\right\}
	\times \|\datavarB\|_{H^{1/2+s}(\Sigma)}.
	\end{align*}
	This and the quasi-optimality~\eqref{eqn:Sh_quasi_best} conclude the proof of 
	\textbf{\Cref{thm:B}}.
\end{proof}

\begin{remark}[curved partitions]\label{rem:curved_partitions}
	Many different formulations of curved/parametric discretisations
	exist in the literature, along with strategies for a practical construction
	of the element maps $\Phi_T$.
	Common approaches include (iso-)parametric element maps
	derived by interpolation of the boundaries/interfaces%
	~\cite[Chap.~3--4]{Cia:FiniteElementMethod2002},~\cite[Chap.~8]{SS:BoundaryElementMethods2011}
	and element maps obtained from the restriction of given global
	parametrisations of (parts of) the physical domain over a simpler 
	reference domain as discussed, e.g., 
	in~\cite[Sec.~5]{MS:ConvergenceAnalysisFinite2010} and \cite{OR:TraceFiniteElement2017}.
	Both of these strategies lead to shape-regular partitions in the sense of
	\Cref{def:shape_regular} and are included 
	in the unified presentation of this section. 
\end{remark}

\subsection{Data-sparse single-layer approximation}%
\label{sub:Algebraic structure and sparsity}

Recall the algebraic structure from \Cref{sub:Algebraic formulation} and observe
from the shape-regularity that the 
dimensions of the discrete spaces $\Vh=S^p_{\mathrm{D}}(\mathcal{T})$ and
$\Bdh=P_k(\mathcal{P})$ satisfy
\begin{align*}
	\NVh=\operatorname{dim}\Vh\in\mathcal{O}(\diamT^{-n})
	\qquad\text{and}\qquad
	\NBdh=\operatorname{dim}\Bdh\in\mathcal{O}(\diamP^{1-n}).
\end{align*}
Under the mesh compatibility condition $\diamT\lesssim \diamP$
from~\eqref{eqn:rel_mesh_size}, the inverse $\Vh$-stiffness matrix
$N_\diamT=L_\diamT^{-1}\in \mathbb C^{\NVh\times\NVh}$ is substantially larger than
the single-layer matrix (cf.~\eqref{eqn:Vhef})
\begin{align}\label{eqn:single_layer_matrix_5}
	V_{\diamT}^{\diamP} = (R_{\diamT}^{\diamP})^{H}N_{\diamT}R_{\diamT}^\diamP\in\mathbb{K}^{\NBdh\times \NBdh}
\end{align}
and the remaining parts of
this section discuss the efficient computation of~\eqref{eqn:single_layer_matrix_5}.

Abbreviate $[J]\coloneqq \{1,\dots,J\}$ for any $J\in\mathbb{N}_0$.
Choose local bases $\{\genvarVh[j]\}_{j=1}^{\NVh}$ and $\{\genvarBdh[\ell]\}_{\ell=1}^{\NBdh}$ 
of $\Vh$ and $\Bdh$ such that,
for every $j\in[\NVh]$ and $\ell\in[\NBdh]$, there are
$T_j\in\mathcal{T}$ and $\tau_\ell\in\mathcal{P}$ with
\begin{align}\label{eqn:proxy_elem}
	\operatorname{supp}\genvarVh[j]
	\subset\overline{\Omega_{T_j}}
	\coloneqq\bigcup\{K\in\mathcal{T}\ :\ T_j\cap K\ne\emptyset\}
	\qquad\text{and}\qquad
	\operatorname{supp}\genvarBdh[\ell]\subset \tau_\ell.
\end{align}
This localisation reflects that the single-layer operator is supported on
$\Sigma$ and results in a sparse transfer matrix
$R_{\diamT}^{\diamP}\in\mathbb{K}^{\NVh\times \NBdh}$.
Recall $\Const{ov}$ from \Cref{lem:localisation}.
	\begin{lemma}[sparsity]\label{lem:sparsity}
		For any $\ell\in[\NBdh]$, the $\ell$-th column of $R_\diamT^\diamP$
		has at most $|\mathcal{N}_{\Sigma}(\ell)| \leq \Const{R}$ 
		nonzero entries that correspond to the row indices
		\begin{align*}
			\mathcal{N}_{\Sigma}(\ell)
			\coloneqq\{j\in[\NVh]\ :\ 
			\operatorname{supp}\genvarVh[j]\cap 
			\operatorname{supp}\genvarBdh[\ell]\ne\emptyset\}
			\qquad\text{with}\qquad\Const{R}\coloneqq\Const{ov}(1+\diamP/\diamT)^n.
		\end{align*}
		In particular, $R_\diamT^\diamP$ is sparse 
		with at most $\operatorname{nnz}R_\diamT^\diamP\leq
		\Const{R}\,\NBdh$ nonzero entries.
	\end{lemma}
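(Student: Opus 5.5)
The argument reduces the sparsity of each column to a support count, which \Cref{lem:localisation} then bounds. By definition, $R_{\diamT,j\ell}^{\diamP}=\langle\traceS\genvarVh[j],\conj{\genvarBdh[\ell]}\rangle_{\B\times\Bd}$. Since $\genvarBdh[\ell]\in P_k(\mathcal{P})\subset L^\infty(\Sigma)$ and $\traceS\genvarVh[j]\in H^{1/2}(\Sigma)\subset L^2(\Sigma)$, this pairing is the $L^2(\Sigma)$ integral $\int_\Sigma \traceS\genvarVh[j]\,\conj{\genvarBdh[\ell]}\d s$ over $\operatorname{supp}\genvarBdh[\ell]\subset\tau_\ell$. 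The trace of $\genvarVh[j]$ vanishes on $\Sigma\setminus\operatorname{supp}\genvarVh[j]$; this is clear since $\genvarVh[j]$ is continuous, piecewise polynomial, and vanishes outside its closed support. Consequently the entry is zero unless $\operatorname{supp}\genvarVh[j]\cap\operatorname{supp}\genvarBdh[\ell]\ne\emptyset$, i.e.\ unless $j\in\mathcal{N}_\Sigma(\ell)$. This establishes the first claim on the location of the nonzero entries.

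To count $|\mathcal{N}_\Sigma(\ell)|$, I use \eqref{eqn:proxy_elem}. If $j\in\mathcal{N}_\Sigma(\ell)$, then $\overline{\Omega_{T_j}}\cap\tau_\ell\ne\emptyset$, so some element $K\in\mathcal{T}(\tau_\ell)$ touches $T_j$. Each $K$ has only a uniformly bounded number of neighbours (by shape regularity and quasi-uniformity, via the same ball-volume argument as in \Cref{lem:localisation} with $r=1$). Moreover, each element $T$ carries a uniformly bounded number of basis functions $j$ with $T_j=T$, namely at most $\dim P_p(\widehat T)$ local degrees of freedom assigned to it. Hence $|\mathcal{N}_\Sigma(\ell)|\lesssim|\mathcal{T}(\tau_\ell)|\le\Const{ov}(1+\diamP/\diamT)^n$ by \Cref{lem:localisation}. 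A cleaner route to the stated constant is to apply the ball argument of \Cref{lem:localisation} directly to the enlarged set $\{T_j : j\in\mathcal{N}_\Sigma(\ell)\}$. These elements lie within distance $2\diamT$ of $\tau_\ell$, so they are contained in $B_{\diamP+3\diamT}(x)$ for $x\in\tau_\ell$. The resulting bound is $(3+\diamP/\diamT)^n r_0^{-n}$ times the number of dofs per element, which is absorbed into $\Const{ov}$ after enlarging it (this is harmless since $\Const{ov}$ is only specified up to dependence on $n,\Const{sr},\Const{qu}$, and $p$ through the local dof count).

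The final statement follows by summing over columns: $\operatorname{nnz}R_\diamT^\diamP=\sum_{\ell}|\mathcal{N}_\Sigma(\ell)|\le\Const{R}\NBdh$. The only delicate step is the constant bookkeeping in the counting argument: the bound must match $\Const{ov}(1+\diamP/\diamT)^n$ exactly, which requires interpreting $\Const{ov}$ generously so that it also absorbs the neighbour-patch enlargement and the per-element dof count. The support and trace argument itself is routine.
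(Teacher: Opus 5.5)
Your proof is correct and follows the paper's route: the support condition \eqref{eqn:proxy_elem} localises the pairing, and the ball-counting argument of \Cref{lem:localisation} then bounds the number of contributing indices. The paper's one-line proof asserts $|\mathcal{N}_\Sigma(\ell)|\leq|\mathcal{T}(\tau_\ell)|$ directly, and this drops the factors you keep: a panel lying inside a single element already meets the supports of all $\dim P_p(\widehat T)$ basis functions on that element. So your point that the constant must also absorb the per-element degree-of-freedom count and the patch enlargement, and therefore depends on $p$ as well as on $n$, $\Const{sr}$, $\Const{qu}$, is the accurate form of the statement (your only slip is writing $\operatorname{nnz}R_\diamT^\diamP=\sum_\ell|\mathcal{N}_\Sigma(\ell)|$, which should be $\leq$).
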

	\begin{proof}
		Combine $|\mathcal{N}_{\Sigma}(\ell)|\leq |\mathcal{T}(\tau_{\ell})|$ 
		with $\tau_\ell\in\mathcal{P}$ from~\eqref{rem:delta_resolve_interface}
		and \Cref{lem:localisation}.
	\end{proof}
	
	Each nonzero entry of $R_{\diamT}^{\diamP}$ is an $L^2$-pairing of locally
	supported (mapped) piecewise polynomial basis functions over an intersection $\tau\cap T$ of
	$\tau\in\mathcal P$ and $T\in\mathcal T(\tau)$.
	By \Cref{lem:sparsity}, all nonzero entries are contained in the sparse submatrix
	\begin{align}\label{eqn:R_block_def}
		R_{\diamT,\Sigma}^{\diamP}
		\coloneqq R_{\diamT}^{\diamP}\big|_{\mathcal N_\Sigma\times[\NBdh]}
		\in\mathbb C^{|\mathcal N_\Sigma|\times\NBdh},
		\qquad
		\mathcal N_\Sigma\coloneqq\bigcup_{\ell\in[\NBdh]}\mathcal N_\Sigma(\ell)
	\end{align}
	with $|\mathcal N_\Sigma|\leq \Const{R}\NBdh$ and
	$\operatorname{nnz}R_{\diamT,\Sigma}^{\diamP}\leq \Const{R}\NBdh$.
	Hence the assembly cost of $R_{\diamT,\Sigma}^{\diamP}$ is proportional to
	$\Const{R}\NBdh$ (up to computing the intersection integral, e.g., by a fixed quadrature rule).

	For the complexity estimate of~\eqref{eqn:single_layer_matrix_5},
	a standard sparse--dense matrix multiplication (SpMM)
	model is adopted: if $R$ is sparse and $M$ is dense with $m\in\mathbb{N}$ columns, then the
	products $MR$ and $R^{H}M$ can be evaluated in
	$\mathcal{O}(m\,\operatorname{nnz}R)$ operations.
	
	\begin{theorem}[complexity]\label{thm:complexity}
		Let $R_{\diamT,\Sigma}^\diamP$ and $N_{\diamT}$ be given and assume the above SpMM model.
		The single-layer matrix $V_\diamT^\diamP$ can be computed in 
		$\mathcal{O}((1+\diamP/\diamT)^{2n}\NBdh^2)$ operations.
		In particular, if $\diamP\lesssim\diamT$, then the computational complexity 
		is $\mathcal{O}(\NBdh^2)$.
	\end{theorem}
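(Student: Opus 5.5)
The plan is to exploit the factorisation \eqref{eqn:single_layer_matrix_5} together with the sparsity from \Cref{lem:sparsity}, so that $N_\diamT$ enters only through a small submatrix. The first step restricts the volume matrix. Since all nonzero rows of $R_\diamT^\diamP$ lie in the index set $\mathcal N_\Sigma$ from \eqref{eqn:R_block_def}, one has
\begin{align*}
	V_\diamT^\diamP=(R_{\diamT,\Sigma}^{\diamP})^H N_{\diamT,\Sigma}R_{\diamT,\Sigma}^{\diamP}
	\qquad\text{with}\qquad
	N_{\diamT,\Sigma}\coloneqq N_\diamT|_{\mathcal N_\Sigma\times\mathcal N_\Sigma}\in\mathbb C^{|\mathcal N_\Sigma|\times|\mathcal N_\Sigma|}.
\end{align*}
Extracting this block costs only index access, so no operations on the full $\NVh\times\NVh$ matrix are required; this is the sense in which $N_\diamT$ is needed only near the boundary.

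The second step computes $M\coloneqq N_{\diamT,\Sigma}R_{\diamT,\Sigma}^\diamP\in\mathbb C^{|\mathcal N_\Sigma|\times\NBdh}$. This is a dense matrix times a sparse matrix, and the dense factor has $m=|\mathcal N_\Sigma|$ rows. Applying the SpMM model row-wise (equivalently, to the transpose), the product costs $\mathcal O(|\mathcal N_\Sigma|\operatorname{nnz}R_{\diamT,\Sigma}^\diamP)$. By \Cref{lem:sparsity} and $|\mathcal N_\Sigma|\le\Const{R}\NBdh$, this is $\mathcal O(\Const{R}^2\NBdh^2)$.

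The third step computes $(R_{\diamT,\Sigma}^\diamP)^HM$, where $M$ is dense with $\NBdh$ columns. The SpMM model gives a cost of $\mathcal O(\NBdh\operatorname{nnz}R_{\diamT,\Sigma}^\diamP)=\mathcal O(\Const{R}\NBdh^2)$. Adding the two costs and inserting $\Const{R}=\Const{ov}(1+\diamP/\diamT)^n$ gives the total
\begin{align*}
	\mathcal O\bigl((1+\diamP/\diamT)^{2n}\NBdh^2\bigr).
\end{align*}
If $\diamP\lesssim\diamT$, then $(1+\diamP/\diamT)^{2n}$ is bounded by a constant, and the bound reduces to $\mathcal O(\NBdh^2)$.

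The main point requiring care is the first product. There, the SpMM model must be applied with the sparse factor on the right of a dense matrix, and the number of dense rows must be counted as $|\mathcal N_\Sigma|$ rather than $\NVh$. This is exactly why restricting to $\mathcal N_\Sigma$ in the first step, using the bound $|\mathcal N_\Sigma|\le\Const{R}\NBdh$, is essential for obtaining the stated complexity.
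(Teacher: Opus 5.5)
Your proof is correct and follows the paper's argument essentially verbatim. Like the paper, you restrict to $V_\diamT^\diamP=(R_{\diamT,\Sigma}^{\diamP})^H N_{\diamT,\Sigma}R_{\diamT,\Sigma}^{\diamP}$ and apply the SpMM model to both products, using $\max\{\operatorname{nnz}R_{\diamT,\Sigma}^\diamP,|\mathcal N_\Sigma|\}\le\Const{R}\NBdh$ and $\Const{R}\approx(1+\diamP/\diamT)^n$; counting the rows of the dense factor in $N_{\diamT,\Sigma}R_{\diamT,\Sigma}^\diamP$ is a sensible refinement of the paper's loosely worded SpMM model, though it changes nothing here because $N_{\diamT,\Sigma}$ is square.
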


	\begin{proof}
		The sparsity structure of $R_\diamT^\diamP$ from \Cref{lem:sparsity} and~\eqref{eqn:R_block_def}
		simplify~\eqref{eqn:single_layer_matrix_5} to
		\begin{align}\label{eqn:single_layer_matrix_simplify}
			V_{\diamT}^{\diamP} = 
			(R_{\diamT,\Sigma}^{\diamP})^{H}N_{\diamT,\Sigma}R_{\diamT,\Sigma}^\diamP
			\qquad\text{with}\qquad 
			N_{\diamT,\Sigma} 
			\coloneqq N_{\diamT}|_{\mathcal{N}_{\Sigma}\times\mathcal{N}_{\Sigma}}.
		\end{align}
		Since $\max\{\operatorname{nnz}R_{\diamT,\Sigma}^\diamP, |\mathcal{N}_{\Sigma}|\}\leq\Const{R}\NBdh$,
		the SpMM model computes $M\coloneqq N_{\diamT,\Sigma}R_{\diamT,\Sigma}^\diamP$ and 
		$V_\diamT^\diamP=(R_{\diamT,\Sigma}^{\diamP})^HM$ in
		$\mathcal{O}(\Const{R}^2\NBdh^2)$ operations.
		(Observe that the dense submatrix $N_{\diamT,\Sigma}$ can be materialised from $N_\diamT$ in 
		$\mathcal{O}(\Const{R}^2\NBdh^2)$.)
		This and $\Const{R}\approx(1+\diamP/\diamT)^n$ conclude the proof.
	\end{proof}

	Hence \textbf{\Cref{thm:C}} from the introduction holds: If $N_{\diamT,\Sigma}$ is available 
	(e.g., from a preprocessing step) and $\diamP\lesssim\diamT$, then the assembly cost of 
	$V_\diamT^\diamP$ is quadratic $\mathcal{O}(\NBdh^2)$ in the boundary degrees of freedom, 
	matching the cost of forming the dense Galerkin matrix in classical BEM.
	Together with \textbf{\Cref{thm:B}}, this motivates the natural scaling $\diamT\approx\diamP$.

\begin{remark}[data-sparse approximation]\label{rem:data_sparse}
	In classical BEM, the quadratic $\mathcal{O}(\NBdh^2)$ complexity for assembling
	and storing dense system matrices is typically avoided by data-sparse
	representations, 
	which support fast arithmetic
	such as $\mathcal{H}$- and $\mathcal{H}^2$-matrices%
	~\cite{Hac:HierarchicalMatricesAlgorithms2015,
	BH:ExistenceHmatrixApproximants2003,
	Beb:EfficientInversionGalerkin2004,
	Bor:ApproximationSolutionOperators2010}.
	The factorisation~\eqref{eqn:single_layer_matrix_simplify} exhibits the same
	structure for the discrete single-layer operator $V_{\diamT}^{\diamP}$:
	the transfer matrix $R_{\diamT,\Sigma}^{\diamP}$ is sparse, while all dense
	information is contained in the restricted inverse stiffness matrix
	$N_{\diamT,\Sigma}$.
	Consequently, any data-sparse approximation $\widetilde N_{\diamT}$ of
	$N_{\diamT}=L_{\diamT}^{-1}$ induces a data-sparse
	realisation 
	\[
		\widetilde V_{\diamT}^{\diamP}
		\coloneqq(R_{\diamT,\Sigma}^{\diamP})^{H}\,
		\widetilde N_{\diamT,\Sigma}\,R_{\diamT,\Sigma}^{\diamP},
		\qquad
		\widetilde N_{\diamT,\Sigma}
		\coloneqq\widetilde N_{\diamT}|_{\mathcal{N}_{\Sigma}\times\mathcal{N}_{\Sigma}}.
	\]
	The substitution of $N_{\diamT}$ by $ \widetilde N_{\diamT}$ enters the error analysis in
	\Cref{cor:quasi-best approximation} only through the perturbation term
	\eqref{eqn:Nh_approx_error}, which can be reduced to any prescribed tolerance in applications.
	For elliptic finite element discretisations, accurate $\mathcal H$-matrix approximants
	$\widetilde N_{\diamT}$ are available with (stretched-)exponential error decay in
	the (local) block rank~\cite{FMP:HmatrixApproximability2015}.
\end{remark}

\section*{Acknowledgements}
The authors gratefully acknowledge the financial support by 
the Swiss National Science Foundation under grant no.\ 2000-1-240043.

\bibliographystyle{alphaabbr}
\bibliography{Bibliography}
\end{document}